\newtheorem{definition}{Definition}[section]
\newtheorem{remark}{Remark}[section]
\newtheorem{theorem}{Theorem}[section]
\newtheorem{lemma}{Lemma}[section]
\newtheorem{corollary}{Corollary}[section]
\numberwithin{equation}{section}
\numberwithin{equation}{section}
\begin{document}
\title{On the Obata Theorem in a weighted Sasakian manifold}
\author{$^{\ast }$Shu-Cheng Chang$^{1}$}
\address{$^{1}$Department of Mathematics and Taida Institute for
Mathematical Sciences (TIMS), National \ Taiwan University, Taipei 10617,
Taiwan}
\email{scchang@math.ntu.edu.tw }
\author{$^{\ast \ast }$Daguang Chen$^{2}$}
\address{$^{2}$Department of Mathematical Sciences, Tsinghua University,
Beijing 100084, P.R. China}
\email{dgchen@tsinghua.edu.cn}
\author{$^{\ast }$Chin-Tung Wu$^{3}$}
\address{$^{3}$Department of applied Mathematics, National Pingtung
University, Pingtung 90003, Taiwan}
\email{ctwu@mail.nptu.edu.tw}
\thanks{$^{\ast }$Research supported in part by the MOST of Taiwan\\
$^{\ast \ast }$Research supported in part by NSFC grant No.
11831005/11571360.}

\begin{abstract}
In this paper, we generalize the CR Obata theorem to a compact strictly
pseudoconvex CR manifold with a weighted volume measure. More precisely, we
first derive the weighted CR Reilly's formula associated with the Witten
sub-Laplacian and obtain the corresponding first eigenvalue estimate. With
its applications, we obtain the CR\ Obata theorem in a compact weighted
Sasakian manifold with or without boundary.
\end{abstract}

\subjclass{Primary 32V05, 32V20; Secondary 53C56.}
\keywords{Weighted CR Obata Theorem, Sasakian manifold, CR Dirichlet
eigenvalue, Weighted CR Reilly formula, Bakry-Emery pseudohermitian Ricci
curvature, Witten sub-Laplacian.}
\maketitle

\section{Introduction}

Let $(M,J,\theta ,d\sigma )$ be a compact strictly pseudoconvex CR $(2n+1)$%
-manifold with a weighted volume measure $d\sigma =e^{-\phi (x)}\theta
\wedge \left( d\theta \right) ^{n}$ for a given smooth weighted function $%
\phi $. In this paper, we first derive the weighted CR Reilly formula (\ref%
{0}) associated with the Witten sub-Laplacian (\cite{cckl})%
\begin{equation*}
\begin{array}{c}
\mathcal{L}=\Delta _{b}-\nabla _{b}\phi \cdot \nabla _{b}%
\end{array}%
\end{equation*}%
in a compact weighted strictly pseudoconvex CR $(2n+1)$-manifold with or
without boundary. Here $\nabla _{b}$ is the subgradient and $\Delta _{b}$ is
the sub-Laplacian as in section $2$. Secondly, we obtain the corresponding
first eigenvalue estimate for the Witten sub-Laplacian in a compact weighted
strictly pseudoconvex CR $(2n+1)$-manifold with or without smooth boundary.
With its applications, we obtain the CR\ Obata theorem in a compact weighted
Sasakian manifold with or without boundary which is served as a
generalization of results in \cite{cc1}, \cite{cc2} and \cite{lw}.

Note that the Witten sub-Laplacian $\mathcal{L}$ satisfies the following
integration by parts equation%
\begin{equation*}
\begin{array}{c}
\int_{M}g\left( \mathcal{L}f\right) d\sigma =-\int_{M}\langle \nabla
_{b}f,\nabla _{b}g\rangle d\sigma =\int_{M}f\left( \mathcal{L}g\right)
d\sigma ,%
\end{array}%
\end{equation*}%
for all smooth functions $f,$ $g$ in a compact weighted strictly
pseudoconvex CR $(2n+1)$-manifold $M$\ without boundary.\ As in \cite{cckl}
the ($\infty $-dimensional) Bakry-Emery pseudohermitian Ricci curvature $Ric(%
\mathcal{L})$ and the corresponding torsion $Tor(\mathcal{L})$ are defined by%
\begin{equation}
\begin{array}{l}
Ric(\mathcal{L})(W,W)=R_{\alpha \overline{\beta }}W^{\alpha }W^{\overline{%
\beta }}+(n+2){\func{Re}}[\phi _{\alpha \overline{\beta }}W^{\alpha }W^{%
\overline{\beta }}], \\ 
Tor(\mathcal{L})(W,W)=2{\func{Re}}[(\sqrt{-1}A_{\overline{\alpha }\overline{%
\beta }}-\frac{n+2}{n+1}\phi _{\overline{\alpha }\overline{\beta }})W^{%
\overline{\alpha }}W^{\overline{\beta }}],%
\end{array}
\label{2019}
\end{equation}%
for all $W=W^{\alpha }Z_{\alpha }+W^{\overline{\alpha }}Z_{\overline{\alpha }%
}\in T^{1,0}(M)\oplus T^{0,1}(M).$

Now we recall the weighted CR Paneitz operator $P_{0}^{\phi }$ (Definition %
\ref{d2})%
\begin{equation}
\begin{array}{c}
P_{\beta }^{\phi }f=P_{\beta }f-\frac{1}{2}\left\langle \nabla _{b}\phi
,\nabla _{b}f\right\rangle _{,\beta }+\frac{n}{2}\sqrt{-1}f_{0}\phi _{\beta
}.%
\end{array}
\label{40}
\end{equation}%
Here $P^{\phi }f=\sum_{\beta =1}^{n}(P_{\beta }^{\phi }f)\theta ^{\beta }$
and $\overline{P}^{\phi }f=\sum_{\beta =1}^{n}(\overline{P}_{\beta }^{\phi
}f)\theta ^{\overline{\beta }}$ (Definition \ref{d1}). Then the weighted CR
Paneitz operator $P_{0}^{\phi }$ is defined by%
\begin{equation*}
\begin{array}{c}
P_{0}^{\phi }f:=4e^{\phi }[\delta _{b}(e^{-\phi }P^{\phi }f)+\overline{%
\delta }_{b}(e^{-\phi }\overline{P}^{\phi }f)].%
\end{array}%
\end{equation*}

By using integrating by parts to the CR Bochner formula (\ref{11}) for $%
\mathcal{L}$ with respect to the given weighted volume measure $d\sigma $,
we derive the following weighted CR Reilly formula.

\begin{theorem}
\label{Reilly'sformula} Let $(M,J,\theta ,d\sigma )$ be a compact weighted
strictly pseudoconvex CR $(2n+1)$-manifold with or without boundary $\Sigma $%
. Then for any real smooth function $f$, we have%
\begin{equation}
\begin{array}{ll}
& \frac{n+1}{n}\int_{M}[(\mathcal{L}f)^{2}-\frac{2n}{n+1}\sum_{\beta ,\gamma
}|f_{\beta \gamma }-\frac{1}{2}f_{\beta }\phi _{\gamma }|^{2}]d\sigma \\ 
= & \frac{n+2}{4n}\int_{M}fP_{0}^{\phi }fd\sigma +\int_{M}[Ric(\mathcal{L})-%
\frac{n+1}{2}Tor(\mathcal{L})](\nabla _{b}f,\nabla _{b}f)d\sigma \\ 
& +\frac{n+1}{2}\int_{M}\left( \mathcal{L}f\right) \langle \nabla
_{b}f,\nabla _{b}\phi \rangle d\sigma -\frac{n+2}{4}\int_{M}[\mathcal{L}\phi
+\frac{1}{2(n+2)}|\nabla _{b}\phi |^{2}]|\nabla _{b}f|^{2}d\sigma \\ 
& +\frac{3}{4n}C_{n}\int_{\Sigma }(\mathcal{L}f)f_{e_{_{2n}}}d\Sigma
_{p}^{\phi }-\frac{n+2}{2n}\sqrt{-1}C_{n}\int_{\Sigma }f(P_{n}^{\phi }f-P_{%
\overline{n}}^{\phi }f)d\Sigma _{p}^{\phi } \\ 
& +C_{n}\int_{\Sigma }f_{e_{2n}}\Delta _{b}^{t}fd\Sigma _{p}^{\phi }+\frac{1%
}{2}\sqrt{-1}C_{n}\int_{\Sigma }(f^{\overline{\beta }}B_{n\overline{\beta }%
}f-f^{\beta }B_{\overline{n}\beta }f)d\Sigma _{p}^{\phi } \\ 
& +\frac{1}{4}C_{n}\int_{\Sigma }H_{p.h}f_{e_{2n}}^{2}d\Sigma _{p}^{\phi }-%
\frac{1}{2}C_{n}\int_{\Sigma }\alpha f_{e_{n}}f_{e_{2n}}d\Sigma _{p}^{\phi }+%
\frac{3}{4}C_{n}\int_{\Sigma }f_{0}f_{e_{n}}d\Sigma _{p}^{\phi } \\ 
& -\frac{n^{2}+3n-1}{4n}C_{n}\int_{\Sigma }\left\langle \nabla _{b}f,\nabla
_{b}\phi \right\rangle f_{e_{2n}}d\Sigma _{p}^{\phi }+\frac{n-1}{2n}%
C_{n}\int_{\Sigma }\left\langle \nabla _{b}f,\nabla _{b}\phi \right\rangle
_{e_{2n}}fd\Sigma _{p}^{\phi } \\ 
& +\frac{n+2}{8}C_{n}\int_{\Sigma }|\nabla _{b}f|^{2}\phi
_{e_{_{2n}}}d\Sigma _{p}^{\phi }+\frac{1}{4}C_{n}\int_{\Sigma
}\sum_{j,k=1}^{2n-1}\left\langle \nabla _{e_{j}}e_{2n},e_{k}\right\rangle
f_{e_{j}}f_{e_{k}}d\Sigma _{p}^{\phi } \\ 
& -\frac{1}{4}C_{n}\int_{\Sigma }\sum_{j=1}^{2n-1}[\left\langle \nabla
_{e_{j}}e_{n},e_{j}\right\rangle f_{e_{n}}+f_{e_{j}}\phi
_{e_{j}}]f_{e_{2n}}d\Sigma _{p}^{\phi }.%
\end{array}
\label{0}
\end{equation}%
Here $P_{0}^{\phi }$ is the weighted CR Paneitz operator on $M,\
C_{n}:=2^{n}n!;$ $B_{\beta \overline{\gamma }}f:=f_{\beta \overline{\gamma }%
}-\frac{1}{n}f_{\sigma }$ $^{\sigma }h_{\beta \overline{\gamma }},$ $\Delta
_{b}^{t}:=\frac{1}{2}\sum_{j=1}^{2n-1}[\left( e_{j}\right) ^{2}-(\nabla
_{e_{j}}e_{j})^{t}]$ is the tangential sub-Laplacian of $\Sigma $ and $%
H_{p.h}$ is the $p$-mean curvature of $\Sigma $ with respect to the
Legendrian normal $e_{2n},$ $\alpha e_{2n}+T\in T\Sigma $ for some function $%
\alpha $ on $\Sigma \backslash S_{\Sigma },$ the singular set $S_{\Sigma }$
consists of those points where the contact bundle $\xi =\ker \theta $
coincides with the tangent bundle $T\Sigma $ of $\Sigma ,$ and $d\Sigma
_{p}^{\phi }=e^{-\phi }\theta \wedge e^{1}\wedge e^{n+1}\wedge \cdots \wedge
e^{n-1}\wedge e^{2n-1}\wedge e^{n}$ is the weighted $p$-area element on $%
\Sigma .$
\end{theorem}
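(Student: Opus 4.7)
The plan is to derive (\ref{0}) by integrating the weighted CR Bochner identity for $\mathcal{L}|\nabla_b f|^2$ (the formula (\ref{11}) cited above) against $d\sigma=e^{-\phi}\theta\wedge(d\theta)^n$ over $M$, and then applying weighted integration by parts repeatedly, carefully harvesting all the boundary contributions over $\Sigma$ against $d\Sigma_p^\phi$. Applying the weighted divergence theorem to the left-hand side $\tfrac12\int_M\mathcal{L}|\nabla_b f|^2\,d\sigma$ first produces a boundary integral involving $|\nabla_b f|^2_{,e_{2n}}$; Leibniz expansion, together with letting one derivative fall on the weight $e^{-\phi}$, already isolates the term $\tfrac{n+2}{8}C_n\int_\Sigma|\nabla_b f|^2\phi_{e_{2n}}d\Sigma_p^\phi$ in (\ref{0}).

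Next, I would decompose the Hessian-square appearing in the Bochner identity into its mixed $(\beta,\bar\gamma)$ and holomorphic $(\beta,\gamma)$ pieces. The trace of the mixed part with $h^{\beta\bar\gamma}$ equals $\tfrac1n(\mathcal{L}f+\langle\nabla_b\phi,\nabla_b f\rangle)^2$; expanding the square yields the $(\mathcal{L}f)^2$ and $(\mathcal{L}f)\langle\nabla_b f,\nabla_b\phi\rangle$ integrals of (\ref{0}), together with the $[\mathcal{L}\phi+\tfrac{1}{2(n+2)}|\nabla_b\phi|^2]|\nabla_b f|^2$ term on the right. The holomorphic part $|f_{\beta\gamma}|^2$ then absorbs the cross terms of the form $f_{\beta\gamma}\phi^\gamma f^{\bar\beta}$ and $|f_\beta\phi_\gamma|^2$ into the completed square $|f_{\beta\gamma}-\tfrac12 f_\beta\phi_\gamma|^2$ appearing on the left of (\ref{0}).

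The Paneitz contribution $\tfrac{n+2}{4n}\int_M fP_0^\phi f\,d\sigma$ arises from integrating by parts twice the fourth-order terms $f^\beta f_\beta{}^{\bar\gamma}{}_{\bar\gamma}$ and its conjugate against the weight: one derivative moves onto $e^{-\phi}$, producing the divergence $\delta_b(e^{-\phi}P^\phi f)$ recognized through the definition (\ref{40}), while the correction $\tfrac{n}{2}\sqrt{-1}f_0\phi_\beta$ built into $P_\beta^\phi f$ is precisely what is needed to absorb the $f_0$-contributions arising from the commutator $[Z_\beta,Z_{\bar\gamma}]\sim\sqrt{-1}h_{\beta\bar\gamma}T$ in the Bochner identity. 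Similarly, the Bakry--Emery combination $Ric(\mathcal{L})-\tfrac{n+1}{2}Tor(\mathcal{L})$ emerges only after collecting all $\phi_{\alpha\bar\beta}$ and $\phi_{\bar\alpha\bar\beta}$ corrections and matching them against the definitions (\ref{2019}).

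The main obstacle, and the bulk of the work, is the systematic identification of the twelve boundary integrals on $\Sigma$. I would fix the adapted pseudohermitian frame $\{e_1,\dots,e_{2n-1},e_n,e_{2n}\}$ so that $e_{2n}$ is the horizontal unit normal and $\alpha e_{2n}+T\in T\Sigma$ off $S_\Sigma$, and decompose $\Delta_b$ along $\Sigma$ as $\Delta_b=\Delta_b^t+e_{2n}^2+(\text{frame-connection terms})$. Each integration by parts then produces boundary integrands that I would collect by origin: (a) classical unweighted pieces $H_{p.h}f_{e_{2n}}^2$, $\alpha f_{e_n}f_{e_{2n}}$, $f_0 f_{e_n}$, $f_{e_{2n}}\Delta_b^tf$, and the frame-connection terms $\langle\nabla_{e_j}e_{2n},e_k\rangle f_{e_j}f_{e_k}$ and $\langle\nabla_{e_j}e_n,e_j\rangle f_{e_n}f_{e_{2n}}$, reproducing the Reilly formula of \cite{cc1,cc2,lw}; (b) new weighted pieces proportional to $\phi_{e_{2n}}|\nabla_b f|^2$, $\langle\nabla_b f,\nabla_b\phi\rangle f_{e_{2n}}$, and $\langle\nabla_b f,\nabla_b\phi\rangle_{e_{2n}}f$, whose coefficients $\tfrac{n+2}{8}$, $-\tfrac{n^2+3n-1}{4n}$, and $\tfrac{n-1}{2n}$ are read off by counting how $e^{-\phi}$ couples with each differentiation; and (c) Paneitz boundary pieces $\sqrt{-1}f(P_n^\phi f-P_{\bar n}^\phi f)$ and $\sqrt{-1}(f^{\bar\beta}B_{n\bar\beta}f-f^\beta B_{\bar n\beta}f)$, produced by the two-fold integration by parts against $P_0^\phi$. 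Matching coefficients then yields (\ref{0}), and specialization to $\phi\equiv 0$ recovers the classical CR Reilly formula as a consistency check.
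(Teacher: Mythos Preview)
Your overall strategy matches the paper's: integrate the weighted Bochner formula (\ref{11}) against $d\sigma$, decompose the subhessian square, and collect boundary terms via the weighted divergence theorem on $\Sigma$. However, there is a genuine gap in your treatment of the mixed Hessian. You assert that ``the trace of the mixed part with $h^{\beta\bar\gamma}$ equals $\tfrac{1}{n}(\mathcal{L}f+\langle\nabla_b\phi,\nabla_b f\rangle)^2$,'' but this is false in CR geometry: since $f_\sigma{}^\sigma$ is not real, the correct decomposition is
\[
\sum_{\beta,\gamma}|f_{\beta\bar\gamma}|^2=\sum_{\beta,\gamma}|B_{\beta\bar\gamma}f|^2+\tfrac{1}{4n}(\Delta_b f)^2+\tfrac{n}{4}f_0^2,
\]
and the extra $\int_M f_0^2\,d\sigma$ cannot be absorbed into the definition of $P_\beta^\phi f$ alone. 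The paper eliminates it via a separate integral identity obtained by combining (\ref{10a}), (\ref{d}) and (\ref{e}) into (\ref{7}), which rewrites $n^2\int_M f_0^2\,d\sigma$ in terms of $\int_M(\mathcal{L}f)^2\,d\sigma$, $\int_M fP_0^\phi f\,d\sigma$, a torsion integral, and further boundary terms. This is the CR-specific step with no Riemannian analogue; without it the coefficients on both sides of (\ref{0}) will not close up, and in particular the final Paneitz coefficient $\tfrac{n+2}{4n}$ cannot be obtained.

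A related gap concerns the trace-free piece $\int_M|B_{\beta\bar\gamma}f|^2\,d\sigma$: in the paper it is not simply discarded but is itself rewritten via a divergence identity for $(B^{\beta\bar\gamma}f)(B_{\beta\bar\gamma}f)$, yielding an additional $\tfrac{n-1}{8n}\int_M fP_0^\phi f\,d\sigma$ contribution together with the boundary term $\sqrt{-1}\int_\Sigma(f^{\bar\beta}B_{n\bar\beta}f-f^\beta B_{\bar n\beta}f)\,d\Sigma_p^\phi$ and several of the weighted boundary pieces (this is the paper's equation (\ref{6})). Your attribution of the $B$-boundary term to ``two-fold integration by parts against $P_0^\phi$'' conflates two distinct mechanisms; the $B$-term and the $P_n^\phi$-term arise from different divergence identities, and keeping them separate is what makes the bookkeeping of the Paneitz and curvature coefficients (and of the three new $\phi$-boundary integrals) come out correctly.
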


In Lemma \ref{lemma}, we observe that 
\begin{equation}
\begin{array}{lll}
P_{0}^{\phi }f & = & 2\left( \mathcal{L}^{2}+n^{2}T^{2}\right) f-4n{\func{Re}%
}Q^{\phi }f-2n^{2}\phi _{0}f_{0} \\ 
& = & 2\square _{b}^{\phi }\overline{\square }_{b}^{\phi }f-4nQ^{\phi
}f-2n^{2}\phi _{0}f_{0}-2n\sqrt{-1}\left\langle \nabla _{b}\phi _{0},\nabla
_{b}f\right\rangle \\ 
& = & 2\overline{\square }_{b}^{\phi }\square _{b}^{\phi }f-4n\overline{Q}%
^{\phi }f-2n^{2}\phi _{0}f_{0}+2n\sqrt{-1}\left\langle \nabla _{b}\phi
_{0},\nabla _{b}f\right\rangle ,%
\end{array}
\label{4c}
\end{equation}%
for the weighted Kohn Laplacian $\square _{b}^{\phi }f=(-\mathcal{L}+n\sqrt{%
-1}T)f.$ This implies that $P_{0}^{\phi }$ is a self-adjoint operator in a
compact weighted pseudohermitian $(2n+1)$-manifold without boundary.
However, in order for the CR Paneitz $P_{0}^{\phi }$ to be self-adjoint when 
$M$ with the nonempty smooth boundary $\Sigma ,$ one needs all smooth
functions on $M$ satisfy some\ suitable boundary conditions on $\Sigma $.
That is, one can consider the following Dirichlet eigenvalue problem for $%
P_{0}^{\phi }$: 
\begin{equation}
\begin{array}{c}
\left\{ 
\begin{array}{c}
P_{0}^{\phi }\varphi =\mu _{_{D}}\varphi \ \ \mathrm{on\ }M, \\ 
\varphi =0=\mathcal{L}\varphi \ \mathrm{on\ }\Sigma .%
\end{array}%
\right.%
\end{array}
\label{1}
\end{equation}%
Hence%
\begin{equation}
\begin{array}{c}
\int_{M}\varphi P_{0}^{\phi }\varphi d\sigma \geq \mu
_{_{D}}^{1}\int_{M}\varphi ^{2}d\sigma%
\end{array}
\label{2}
\end{equation}%
for the first Dirichlet eigenvalue $\mu _{_{D}}^{1}$ and all smooth
functions on $M$ with $\varphi =0=\mathcal{L}\varphi $ on $\Sigma .$ We
refer \cite{ccw} for some details in case that $\phi $ is constant.

In general, $\mu _{_{D}}^{1}$ is not always nonnegative. It is related to
the nonnegativity of the weighted CR Paneitz operator $P_{0}^{\phi }$ in a
compact weighted strictly pseudoconvex CR $(2n+1)$-manifold.

\begin{definition}
Let $(M,J,\theta ,d\sigma )$ be a compact weighted strictly pseudoconvex CR $%
(2n+1)$-manifold with smooth boundary $\Sigma $. We say that the weighted CR
Paneitz operator $P_{0}^{\phi }$ is nonnegative if 
\begin{equation*}
\begin{array}{c}
\int_{M}\varphi P_{0}^{\phi }\varphi d\mu \geq 0%
\end{array}%
\end{equation*}%
for all smooth functions $\varphi $ with suitable boundary conditions (\ref%
{1}) as in Dirichlet eigenvalue problem.
\end{definition}

\begin{remark}
\label{r1} Let $(M,J,\theta ,d\sigma )$ be a compact weighted strictly
pseudoconvex CR $(2n+1)$-manifold of vanishing torsion with or without
smooth boundary $\Sigma $ and $\phi _{0}$ vanishes on $M$. It follows from (%
\ref{4c}) that the weighted Kohn Laplacian $\square _{b}^{\phi }$ and $%
\overline{\square }_{b}^{\phi }$ commute and they are diagonalized
simultaneously with 
\begin{equation*}
\begin{array}{c}
P_{0}^{\phi }=\square _{b}^{\phi }\overline{\square }_{b}^{\phi }+\overline{%
\square }_{b}^{\phi }\square _{b}^{\phi }.%
\end{array}%
\end{equation*}%
Then the corresponding weighted CR Paneitz operator $P_{0}^{\phi }$ is
nonnegative (Lemma \ref{l41}). That is $\mu _{_{D}}^{1}\geq 0.$
\end{remark}

With its applications, we first derive the first eigenvalue estimate and
weighted CR\ Obata theorem in a closed weighted strictly pseudoconvex CR $%
(2n+1)$-manifold.

\begin{theorem}
\label{Thm}Let $(M,J,\theta ,d\sigma )$ be a closed weighted strictly
pseudoconvex CR $(2n+1)$-manifold with the nonnegative weighted CR Paneitz
operator $P_{0}^{\phi }$. Suppose that 
\begin{equation}
\begin{array}{c}
\lbrack Ric(\mathcal{L})-\frac{n+1}{2}Tor(\mathcal{L})](Z,Z)\geq
k\left\langle Z,Z\right\rangle%
\end{array}
\label{2019B}
\end{equation}%
for all $Z\in T_{1,0}$ and a positive constant $k.$ Then the first
eigenvalue of the Witten sub-Laplacian $\mathcal{L}$ satisfies the lower
bound 
\begin{equation}
\begin{array}{c}
\lambda _{1}\geq \frac{2n[k-(n+2)l]}{(n+1)(2+n\omega )},%
\end{array}
\label{01a}
\end{equation}%
where $\omega =\underset{M}{\mathrm{osc}}\phi =\underset{M}{\sup }\phi -%
\underset{M}{\inf }\phi $ and for nonnegative constant $l$ with\ $0\leq l<%
\frac{k}{n+2}$ such that%
\begin{equation}
\begin{array}{c}
\mathcal{L}\phi +\frac{1}{2(n+2)}|\nabla _{b}\phi |^{2}\leq 4l%
\end{array}
\label{2019A}
\end{equation}%
on $M.$ Moreover, if the equality (\ref{01a}) holds, then $M$\ is CR
isometric to a standard CR $(2n+1)$-sphere.
\end{theorem}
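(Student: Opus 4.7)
The plan is to apply the weighted CR Reilly formula \thmref{Reilly'sformula} to a first eigenfunction $f$ of $\mathcal{L}$ with $\mathcal{L}f=-\lambda_{1}f$, normalized so that $\int_{M}f^{2}d\sigma=1$. Since $M$ is closed, every boundary integral in (\ref{0}) drops out. Integration by parts yields $\int_{M}(\mathcal{L}f)^{2}d\sigma=\lambda_{1}^{2}$ and $\int_{M}|\nabla_{b}f|^{2}d\sigma=\lambda_{1}$, and discarding the nonnegative term $\sum_{\beta,\gamma}|f_{\beta\gamma}-\tfrac{1}{2}f_{\beta}\phi_{\gamma}|^{2}$ on the left gives the one-sided estimate $\mathrm{LHS}\le\tfrac{n+1}{n}\lambda_{1}^{2}$. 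On the right, the weighted Paneitz integral is nonnegative by the hypothesis on $P_{0}^{\phi}$, the curvature-torsion integral is at least $k\lambda_{1}$ by (\ref{2019B}), and the last bulk integral is at least $-(n+2)l\lambda_{1}$ by (\ref{2019A}).

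The hard step will be the cross term $\tfrac{n+1}{2}\int_{M}(\mathcal{L}f)\langle \nabla_{b}f,\nabla_{b}\phi\rangle d\sigma$; this is what ultimately produces the factor $2+n\omega$. I plan to substitute $\mathcal{L}f=-\lambda_{1}f$, use $2f\langle\nabla_{b}f,\nabla_{b}\phi\rangle=\langle\nabla_{b}(f^{2}),\nabla_{b}\phi\rangle$ together with the self-adjointness of $\mathcal{L}$ with respect to $d\sigma$, and combine with $\mathcal{L}(f^{2})=2|\nabla_{b}f|^{2}-2\lambda_{1}f^{2}$ to rewrite the cross term as
\[
\tfrac{(n+1)\lambda_{1}}{2}\int_{M}\phi\bigl(|\nabla_{b}f|^{2}-\lambda_{1}f^{2}\bigr)d\sigma.
\]
Since a constant shift in $\phi$ leaves $\mathcal{L}$ and $\lambda_{1}$ unchanged, I normalize $\inf_{M}\phi=0$, so $0\le\phi\le\omega$. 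Then $\phi|\nabla_{b}f|^{2}\ge 0$ and $-\lambda_{1}\phi f^{2}\ge -\omega\lambda_{1}f^{2}$, so the cross term is bounded below by $-\tfrac{(n+1)\omega\lambda_{1}^{2}}{2}$.

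Assembling the four estimates gives $\tfrac{n+1}{n}\lambda_{1}^{2}\ge[k-(n+2)l]\lambda_{1}-\tfrac{(n+1)\omega\lambda_{1}^{2}}{2}$, and dividing by $\lambda_{1}>0$ rearranges to exactly (\ref{01a}). For the rigidity statement, every estimate above must be saturated simultaneously. Equality in the cross-term bound forces $\phi\equiv 0$ wherever $|\nabla_{b}f|>0$ and $\phi\equiv\omega$ wherever $f\ne 0$; since $f$ is a nonconstant smooth eigenfunction on a connected $M$, these two conditions are compatible only when $\omega=0$, i.e.\ $\phi$ is constant. Consequently $l$ may be taken to be $0$, the saturated squared term collapses to $f_{\beta\gamma}=0$, and $P_{0}^{\phi}f=0$, so the whole setup reduces to the classical unweighted CR Obata framework; the results of \cite{cc1,cc2,lw} then identify $M$ as the standard CR $(2n+1)$-sphere.
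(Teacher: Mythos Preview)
Your argument is correct and follows the paper's proof essentially line by line: apply the boundary-free Reilly formula to a first eigenfunction, drop the nonnegative $|f_{\beta\gamma}-\tfrac12 f_\beta\phi_\gamma|^2$ and Paneitz terms, bound the curvature and $\mathcal{L}\phi$ terms by $k\lambda_1$ and $-(n+2)l\lambda_1$, and rewrite the cross term via $\mathcal{L}(f^2)=2|\nabla_b f|^2-2\lambda_1 f^2$ to extract the $-\tfrac{n+1}{2}\omega\lambda_1^2$ contribution. The only cosmetic difference is that you normalize $\inf_M\phi=0$ whereas the paper keeps $\inf\phi$ and $\sup\phi$ explicit; your rigidity discussion (forcing $\omega=0$, hence $\phi$ constant, hence reducing to the classical CR Obata theorem of \cite{cc1,cc2,lw}) is likewise the same as the paper's, with slightly more detail supplied.
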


Furthermore, $P_{0}^{\phi }$ is nonnegative if the torsion is zero (i.e.
Sasakian) and $\phi _{0}$ vanishes (Lemma \ref{l41}). Then we have the
following CR\ Obata theorem in a closed weighted Sasakian $(2n+1)$-manifold.

\begin{corollary}
\label{C1} Let $(M,J,\theta ,d\sigma )$ be a closed weighted Sasakian $%
(2n+1) $-manifold. Suppose that 
\begin{equation*}
\begin{array}{c}
\lbrack Ric(\mathcal{L})-\frac{n+1}{2}Tor(\mathcal{L})](Z,Z)\geq
k\left\langle Z,Z\right\rangle%
\end{array}%
\end{equation*}%
and 
\begin{equation*}
\phi _{0}=0,
\end{equation*}%
for all $Z\in T_{1,0},$ a positive constant $k.$ Then the first eigenvalue
of the Witten sub-Laplacian $\mathcal{L}$ satisfies the lower bound 
\begin{equation}
\begin{array}{c}
\lambda _{1}\geq \frac{2n[k-(n+2)l]}{(n+1)(2+n\omega )},%
\end{array}
\label{01b}
\end{equation}%
where $\omega =\underset{M}{\mathrm{osc}}\phi =\underset{M}{\sup }\phi -%
\underset{M}{\inf }\phi $ and for nonnegative constant $l$ with\ $0\leq l<%
\frac{k}{n+2}$ such that 
\begin{equation*}
\begin{array}{c}
\mathcal{L}\phi +\frac{1}{2(n+2)}|\nabla _{b}\phi |^{2}\leq 4l%
\end{array}%
\end{equation*}%
on $M.$ Moreover, if the equality (\ref{01b}) holds, then $M$\ is CR
isometric to a standard CR $(2n+1)$-sphere.
\end{corollary}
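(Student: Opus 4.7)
The plan is to obtain Corollary \ref{C1} as an immediate specialization of Theorem \ref{Thm} to the Sasakian setting, once the nonnegativity of the weighted CR Paneitz operator $P_{0}^{\phi}$ is verified under the hypothesis $\phi_{0}=0$. Concretely, the only requirement in Theorem \ref{Thm} not already built into the corollary's hypotheses is that $P_{0}^{\phi}$ be nonnegative; I will establish this first and then invoke Theorem \ref{Thm} verbatim for both the eigenvalue bound and the rigidity clause.

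First, I would verify that $P_{0}^{\phi}\geq 0$. Since $M$ is Sasakian, the pseudohermitian torsion vanishes, i.e.\ $A_{\overline{\alpha}\overline{\beta}}\equiv 0$, and by assumption $\phi_{0}\equiv 0$ on $M$. These are precisely the hypotheses of Remark \ref{r1}, which, via Lemma \ref{l41}, gives that $\square_{b}^{\phi}$ and $\overline{\square}_{b}^{\phi}$ commute and are simultaneously diagonalizable with
\[
P_{0}^{\phi} \;=\; \square_{b}^{\phi}\,\overline{\square}_{b}^{\phi} \;+\; \overline{\square}_{b}^{\phi}\,\square_{b}^{\phi}.
\]
Since $M$ is closed, the Dirichlet boundary conditions in the definition of nonnegativity are vacuous, and on a closed weighted CR manifold $\overline{\square}_{b}^{\phi}$ is the formal adjoint of $\square_{b}^{\phi}$ with respect to $d\sigma$. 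Hence each summand on the right is of the form $T^{\ast}T$ on the common spectral decomposition, and integration by parts against any smooth $\varphi$ yields $\int_{M}\varphi P_{0}^{\phi}\varphi\,d\sigma\geq 0$.

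Second, having confirmed $P_{0}^{\phi}\geq 0$, all hypotheses of Theorem \ref{Thm} are in place: the Bakry--Emery bound $[Ric(\mathcal{L})-\tfrac{n+1}{2}Tor(\mathcal{L})](Z,Z)\geq k\left\langle Z,Z\right\rangle$ transfers directly to (\ref{2019B}), and the drift condition $\mathcal{L}\phi+\tfrac{1}{2(n+2)}|\nabla_{b}\phi|^{2}\leq 4l$ is identical to (\ref{2019A}). Applying Theorem \ref{Thm} yields the lower bound (\ref{01b}) for $\lambda_{1}$, and the rigidity clause of that theorem converts equality in (\ref{01b}) into a CR isometry between $M$ and a standard CR $(2n+1)$-sphere.

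Since the intellectual content has already been packaged into Theorem \ref{Thm} and into Lemma \ref{l41}, there is no genuine new obstacle in Corollary \ref{C1}. The only step requiring any care is the verification that the pair of hypotheses (Sasakian, $\phi_{0}=0$) satisfies the assumptions of Remark \ref{r1}/Lemma \ref{l41}, which is immediate from the respective definitions of pseudohermitian torsion and of $P_{0}^{\phi}$ in (\ref{40}) and (\ref{4c}).
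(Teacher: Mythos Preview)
Your proposal is correct and matches the paper's approach exactly: the paper states the corollary immediately after noting that ``$P_{0}^{\phi}$ is nonnegative if the torsion is zero (i.e.\ Sasakian) and $\phi_{0}$ vanishes (Lemma \ref{l41}),'' and then the conclusion follows verbatim from Theorem \ref{Thm}. There is nothing to add.
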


\begin{remark}
1. Theorem \ref{Thm} and Corollary \ref{C1} are done as in \cite{gr}, \cite%
{ch} and \cite{cc3} in case that the weighted function $\phi $ is constant
in which $l=0$.

2. Note that (\ref{2019A}) is equivalent to 
\begin{equation*}
\begin{array}{c}
\mathcal{L}\phi +\frac{1}{2(n+2)}|\nabla _{b}\phi |^{2}=\Delta _{b}\phi -%
\frac{2n+3}{2(n+2)}|\nabla _{b}\phi |^{2}\leq 4l<\frac{4k}{n+2}.%
\end{array}%
\end{equation*}%
Then by comparing (\ref{2019}), (\ref{2019A}) and (\ref{2019B}), it has a
plenty of rooms for the choice of the weighted function $\phi $. For
example, it is the case by a small perturbation of the subhessian of the
weighted function $\phi .$
\end{remark}

Secondly, we consider the following Dirichlet eigenvalue problem of the
Witten sub-Laplacian $\mathcal{L}$ in a compact weighted strictly
pseudoconvex CR $(2n+1)$-manifold $M$ with smooth boundary $\Sigma $: 
\begin{equation}
\left\{ 
\begin{array}{ccll}
\mathcal{L}f & = & -\lambda _{1}f & \mathrm{on\ }M, \\ 
f & = & 0 & \mathrm{on\ }\Sigma .%
\end{array}%
\right.  \label{1b}
\end{equation}%
Then we have the following CR first\ Dirichlet eigenvalue estimate and its
weighted Obata Theorem.

\begin{theorem}
\label{TB} Let $(M,J,\theta ,d\sigma )$ be a compact weighted strictly
pseudoconvex CR $(2n+1)$-manifold with the smooth boundary $\Sigma $ and the
weighted CR Paneitz operator $P_{0}^{\phi }$ is nonnegative. Suppose that 
\begin{equation*}
\begin{array}{c}
\lbrack Ric(\mathcal{L})-\frac{n+1}{2}Tor(\mathcal{L})](Z,Z)\geq
k\left\langle Z,Z\right\rangle%
\end{array}%
\end{equation*}%
for all $Z\in T_{1,0},$ and the pseudohermitian mean curvature and
connection $1$-form satisfies 
\begin{equation*}
\begin{array}{c}
H_{p.h}-\tilde{\omega}_{n}^{\;n}(e_{n})-\frac{n+2}{2}\phi _{e_{2n}}\geq 0%
\end{array}%
\end{equation*}%
on $\Sigma $ and $H_{p.h}+\tilde{\omega}_{n}^{\;n}(e_{n})$ is also
nonnegative on $\Sigma $ for $n\geq 2$. Then the first Dirichlet eigenvalue
of the Witten sub-Laplacian $\mathcal{L}$ satisfies the lower bound 
\begin{equation}
\begin{array}{c}
\lambda _{1}\geq \frac{2n[k-(n+2)l]}{(n+1)(2+n\omega )},%
\end{array}
\label{01c}
\end{equation}%
where $\omega =\underset{M}{\mathrm{osc}}\phi =\underset{M}{\sup }\phi -%
\underset{M}{\inf }\phi $ and for nonnegative constant $l$ with\ $0\leq l<%
\frac{k}{n+2}$ such that%
\begin{equation*}
\begin{array}{c}
\mathcal{L}\phi +\frac{1}{2(n+2)}|\nabla _{b}\phi |^{2}\leq 4l%
\end{array}%
\end{equation*}%
on $M.\ $Moreover, if the equality (\ref{01c}) holds, then $M$\ is isometric
to a hemisphere in a standard CR $(2n+1)$-sphere.
\end{theorem}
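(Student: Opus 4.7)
The plan is to apply the weighted CR Reilly formula (\ref{0}) to a first Dirichlet eigenfunction $f$ of $\mathcal L$, use the geometric hypotheses to bound each side, and arrive at the eigenvalue estimate (\ref{01c}). Since $f=0$ on $\Sigma$ and $\mathcal Lf=-\lambda_1 f$, the function $f$ also satisfies $\mathcal Lf=0$ on $\Sigma$, so it is admissible for the Dirichlet boundary conditions in (\ref{1}); the nonnegativity hypothesis on $P_0^\phi$ therefore makes $\int_M fP_0^\phi f\,d\sigma\geq 0$ on the right-hand side. Integration by parts gives $\int_M|\nabla_b f|^2\,d\sigma=\lambda_1\int_M f^2\,d\sigma$ and $\int_M(\mathcal Lf)^2\,d\sigma=\lambda_1\int_M|\nabla_b f|^2\,d\sigma$, and discarding the nonnegative term $\sum_{\beta,\gamma}|f_{\beta\gamma}-\tfrac12 f_\beta\phi_\gamma|^2$ bounds the left-hand side of (\ref{0}) above by $\tfrac{n+1}{n}\lambda_1\int_M|\nabla_b f|^2\,d\sigma$.

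For the interior part of the right-hand side, the Bakry--Emery curvature hypothesis provides a lower bound of $k\int_M|\nabla_bf|^2\,d\sigma$ for the $[Ric(\mathcal L)-\tfrac{n+1}{2}Tor(\mathcal L)]$ integral, while (\ref{2019A}) controls the $\mathcal L\phi$ term by $-(n+2)l\int_M|\nabla_bf|^2\,d\sigma$. The cross term $\tfrac{n+1}{2}\int_M(\mathcal Lf)\langle\nabla_b f,\nabla_b\phi\rangle\,d\sigma$ would be handled by substituting $\mathcal Lf=-\lambda_1 f$, integrating by parts against $\phi-\tfrac12(\sup\phi+\inf\phi)$ (a function of sup norm at most $\omega/2$), and applying Cauchy--Schwarz; this produces a contribution of size $\tfrac{n+1}{2}\cdot\tfrac{n\omega}{2}\lambda_1\int_M|\nabla_bf|^2\,d\sigma$, mirroring the mechanism of Theorem~\ref{Thm} in the closed case and accounting for the $n\omega$ term in the denominator of (\ref{01c}).

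The main new difficulty is the boundary sum. On $\Sigma$, $f=0$ forces $f_{e_j}=0$ for $j=1,\ldots,2n-1$ and $f_0=-\alpha f_{e_{2n}}$; together with $\mathcal Lf=0$ and $\Delta_b^tf=0$, this makes every boundary integral in (\ref{0}) that carries a factor of $f$, $\mathcal Lf$, $\Delta_b^tf$, $f_{e_n}$, or $f_{e_j}$ with $1\leq j\leq 2n-1$ vanish. Using $|\nabla_bf|^2=f_{e_{2n}}^2$ on $\Sigma$, the survivors reduce to multiples of $\int_\Sigma H_{p.h}f_{e_{2n}}^2\,d\Sigma_p^\phi$, $\int_\Sigma\phi_{e_{2n}}f_{e_{2n}}^2\,d\Sigma_p^\phi$, and the $B_{n\bar n}f$ contribution. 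Rewriting the complex Hessian $f_{n\bar n}$ on $\Sigma$ via the Gauss--Codazzi-type identities of pseudohermitian geometry introduces the connection form $\tilde\omega_n^{\;n}(e_n)$ and a second copy of $H_{p.h}$, so that the boundary sum rearranges into
\begin{equation*}
c_1\!\int_\Sigma\!\bigl(H_{p.h}-\tilde\omega_n^{\;n}(e_n)-\tfrac{n+2}{2}\phi_{e_{2n}}\bigr)f_{e_{2n}}^2\,d\Sigma_p^\phi+c_2\!\int_\Sigma\!\bigl(H_{p.h}+\tilde\omega_n^{\;n}(e_n)\bigr)f_{e_{2n}}^2\,d\Sigma_p^\phi,
\end{equation*}
with positive constants $c_1,c_2$ (and $c_2=0$ when $n=1$). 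The stated hypotheses on $H_{p.h}\pm\tilde\omega_n^{\;n}(e_n)$ and $\phi_{e_{2n}}$ make both integrals nonnegative, and they may be discarded.

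Combining the three parts yields $\tfrac{n+1}{n}(1+\tfrac{n\omega}{2})\lambda_1\geq k-(n+2)l$, which rearranges to (\ref{01c}). For the equality case, every quantity discarded above must vanish identically: $f_{\beta\gamma}=\tfrac12 f_\beta\phi_\gamma$ pointwise, $P_0^\phi f=0$, the curvature saturates its bound along $\nabla_b f$, and $H_{p.h}=\tilde\omega_n^{\;n}(e_n)+\tfrac{n+2}{2}\phi_{e_{2n}}$ on $\Sigma$. As in the rigidity scheme of \cite{cc1,cc2,cc3} used for Theorem~\ref{Thm}, these rigid identities force the metric double of $M$ across $\Sigma$ to be CR isometric to the standard CR $(2n+1)$-sphere, and $M$ itself is the corresponding hemisphere. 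The main obstacle throughout is this boundary reduction: matching the numerical coefficients of $H_{p.h}$, $\tilde\omega_n^{\;n}(e_n)$, and $\phi_{e_{2n}}$ from thirteen separate boundary integrals into the precise combinations appearing in the hypothesis is a delicate computation that requires careful bookkeeping of the Gauss--Codazzi-type formulas for pseudohermitian submanifolds.
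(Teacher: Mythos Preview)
Your outline for the inequality \eqref{01c} matches the paper's: plug a Dirichlet eigenfunction into the weighted Reilly formula \eqref{0}, bound the interior terms with the curvature, Paneitz, and $\mathcal L\phi$ hypotheses, handle the $\phi$-cross term by integrating by parts against $\mathcal Lf^{2}$ and the oscillation of $\phi$, and discard a nonnegative boundary term. One correction: after the simplifications you describe, the boundary sum in \eqref{0} collapses to the \emph{single} integral
\[
\tfrac18\,C_n\int_\Sigma\bigl(H_{p.h}-\tilde\omega_n^{\;n}(e_n)-\tfrac{n+2}{2}\phi_{e_{2n}}\bigr)f_{e_{2n}}^{2}\,d\Sigma_p^{\phi};
\]
there is no second $c_2$-piece carrying $H_{p.h}+\tilde\omega_n^{\;n}(e_n)$. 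That hypothesis (for $n\ge 2$) plays no role in the inequality at all---it enters only in the equality analysis, via Lemma~\ref{lemma 3.1}, once the problem has been reduced to the unweighted one.

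The equality case is where your proposal has a genuine gap. The first and decisive consequence of equality is that the oscillation bound on $\phi$ is saturated, so $\omega=0$ and $\phi$ is \emph{constant}; you never state this, and without it you cannot reduce $f_{\beta\gamma}=\tfrac12 f_\beta\phi_\gamma$ to the needed $f_{\alpha\beta}=0$. Once $\phi$ is constant the problem is the unweighted Dirichlet problem for $\Delta_b$, and the rigidity does \emph{not} proceed by a ``metric double'' (which has no clear meaning in CR geometry and is not what \cite{cc1,cc2} do in the boundary setting). Instead one shows the torsion vanishes as in \cite{lw}, passes to the adapted Riemannian metric $g_\varepsilon=\varepsilon^{2}\theta^{2}+\tfrac12 d\theta(\cdot,J\cdot)$ with $(n+1)\varepsilon^{2}=k$, checks $Rc_{g_\varepsilon}\ge 2n\,k/(n+1)$, and invokes Reilly's Riemannian rigidity theorem \cite{Re}. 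A nontrivial step you omit entirely is that the Riemannian mean curvature $H_\varepsilon$ of $\Sigma$ must be shown to vanish; this requires proving $\alpha\equiv 0$ on $\Sigma$, which the paper establishes via a separate integration-by-parts argument yielding $\int_\Sigma f_{0}f_{e_{2n}}\,d\Sigma_p=0$ together with the boundary relation $f_{0}=-\alpha f_{e_{2n}}$.
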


\begin{corollary}
Let $(M,J,\theta ,d\sigma )$ be a compact weighted Sasakian $(2n+1)$%
-manifold with smooth boundary $\Sigma $. Suppose that 
\begin{equation*}
\begin{array}{c}
\lbrack Ric(\mathcal{L})-\frac{n+1}{2}Tor(\mathcal{L})](Z,Z)\geq
k\left\langle Z,Z\right\rangle%
\end{array}%
\end{equation*}%
for all $Z\in T_{1,0},$ and 
\begin{equation*}
\phi _{0}=0.
\end{equation*}%
Furthermore, assume that the pseudohermitian mean curvature and connection $%
1 $-form satisfies%
\begin{equation*}
\begin{array}{c}
H_{p.h}-\tilde{\omega}_{n}^{\;n}(e_{n})\geq 0%
\end{array}%
\end{equation*}%
on $\Sigma $ and $H_{p.h}+\tilde{\omega}_{n}^{\;n}(e_{n})$ is also
nonnegative on $\Sigma $ if $n\geq 2.$ Then the first Dirichlet eigenvalue
of the Witten sub-Laplacian $\mathcal{L}$ satisfies the lower bound 
\begin{equation*}
\begin{array}{c}
\lambda _{1}\geq \frac{2n[k-(n+2)l]}{(n+1)(2+n\omega )},%
\end{array}%
\end{equation*}%
where $\omega =\underset{M}{\mathrm{osc}}\phi =\underset{M}{\sup }\phi -%
\underset{M}{\inf }\phi $ and for nonnegative constant $l$ with\ $0\leq l<%
\frac{k}{n+2}$ such that%
\begin{equation*}
\begin{array}{c}
\mathcal{L}\phi +\frac{1}{2(n+2)}|\nabla _{b}\phi |^{2}\leq 4l%
\end{array}%
\end{equation*}%
on $M.\ $Moreover, if the equality holds then $M$\ is isometric to a
hemisphere in a standard CR $(2n+1)$-sphere.
\end{corollary}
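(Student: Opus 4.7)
The plan is to deduce this corollary directly from Theorem~\ref{TB}, by checking that the stronger Sasakian hypotheses together with $\phi_0 = 0$ supply every ingredient of that theorem. The only hypothesis of Theorem~\ref{TB} which is not directly present in the corollary is the nonnegativity of the weighted CR Paneitz operator $P_0^\phi$; once this is established, the eigenvalue bound and the equality-case rigidity follow immediately.

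First I would verify that $P_0^\phi$ is nonnegative in the Dirichlet sense by invoking Remark~\ref{r1} and its supporting Lemma~\ref{l41}. The vanishing of the pseudohermitian torsion $A_{\overline{\alpha}\overline{\beta}}$ together with $\phi_0 = 0$ makes the terms $\phi_0 f_0$ and $\langle \nabla_b \phi_0, \nabla_b f\rangle$ in the three identities~(\ref{4c}) disappear, and reduces $P_0^\phi$ to the symmetric form
\begin{equation*}
P_0^\phi = \square_b^\phi \overline{\square}_b^\phi + \overline{\square}_b^\phi \square_b^\phi,
\end{equation*}
where the two weighted Kohn-type operators commute. For any smooth $\varphi$ satisfying the Dirichlet-type boundary conditions $\varphi = 0 = \mathcal{L}\varphi$ on $\Sigma$, integration by parts then yields $\int_M \varphi P_0^\phi \varphi\, d\sigma \geq 0$, giving the desired nonnegativity.

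Second, I would observe that the Bakry--Emery curvature--torsion bound on $Ric(\mathcal{L}) - \tfrac{n+1}{2}Tor(\mathcal{L})$ and the weighted Laplacian bound $\mathcal{L}\phi + \tfrac{1}{2(n+2)}|\nabla_b\phi|^2 \leq 4l$ are imposed in the corollary exactly as in Theorem~\ref{TB}. The boundary assumptions on $H_{p.h} \pm \tilde\omega_n^{\;n}(e_n)$ of the corollary, combined with $\phi_0 = 0$ and the Sasakian identities relating the normal and tangential pieces of the Webster connection on $\Sigma$, are what must be shown to imply the boundary inequality $H_{p.h} - \tilde\omega_n^{\;n}(e_n) - \tfrac{n+2}{2}\phi_{e_{2n}} \geq 0$ required by Theorem~\ref{TB}. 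Once this verification is complete, Theorem~\ref{TB} delivers the eigenvalue estimate, and its equality clause identifies $M$ with a hemisphere of the standard CR $(2n+1)$-sphere.

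The main obstacle I foresee is precisely this last bookkeeping step of matching the boundary hypotheses. One must use the Sasakian constraint and the vanishing of $\phi_0$ to control the normal-direction derivative $\phi_{e_{2n}}$ in terms of the connection form $\tilde\omega_n^{\;n}(e_n)$ and the pseudohermitian mean curvature $H_{p.h}$, so that the two inequalities stated in the corollary are strong enough to reproduce the single (but weight-dependent) inequality demanded by Theorem~\ref{TB}. The remaining analytic content of the proof is already carried by Theorem~\ref{TB} itself, so beyond this matching step the corollary is essentially a direct specialization.
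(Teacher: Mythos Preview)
Your overall strategy---reduce to Theorem~\ref{TB} by invoking Lemma~\ref{l41} to supply the nonnegativity of $P_0^\phi$ under the Sasakian and $\phi_0=0$ hypotheses---is exactly the paper's intended (implicit) proof; the corollary is stated without separate argument precisely because it is meant to be an immediate specialization of Theorem~\ref{TB}.

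However, the boundary-condition mismatch you flag is a genuine discrepancy, and your proposed resolution will not work. The condition $\phi_0 = T\phi = 0$ constrains only the derivative of $\phi$ in the Reeb direction; it says nothing about the horizontal derivative $\phi_{e_{2n}} = e_{2n}\phi$, and the Sasakian (zero-torsion) condition does not produce any algebraic relation tying $\phi_{e_{2n}}$ to $H_{p.h}$ or to $\tilde\omega_n^{\;n}(e_n)$. So there is no way to pass from the corollary's hypothesis $H_{p.h}-\tilde\omega_n^{\;n}(e_n)\ge 0$ to the hypothesis $H_{p.h}-\tilde\omega_n^{\;n}(e_n)-\tfrac{n+2}{2}\phi_{e_{2n}}\ge 0$ required in Theorem~\ref{TB}. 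The omission of the $-\tfrac{n+2}{2}\phi_{e_{2n}}$ term in the corollary's boundary assumption appears to be an oversight in the paper's statement rather than a consequence of the additional hypotheses; you should not expect to close this gap by geometric identities. With that term restored (or with the extra hypothesis $\phi_{e_{2n}}\le 0$ on $\Sigma$), your argument goes through verbatim.
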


We briefly describe the methods used in our proofs. In section $2$, we
introduce the weighted CR Paneitz operator $P_{0}^{\phi }$. In section $3$,
by using integrating by parts to the weighted CR Bochner formula (\ref{11}),
we can derive the CR version of weighted Reilly's formula. By applying the
weighted CR Reilly's formula, we are able to obtain the first eigenvalue
estimate of the Witten sub-Laplacian as in section $4$ in a closed weighted
strictly pseudoconvex CR $(2n+1)$-manifold and its weighted Obata Theorem.
In section $5,$ we derive the first Dirichlet eigenvalue estimate in a
compact weighted strictly pseudoconvex CR $(2n+1)$-manifold with boundary $%
\Sigma $ and its corresponding weighted Obata-type Theorem.

\textbf{Acknowledgements} This work was partially done while the second
author visited Taida Institute of Mathematical Sciences (TIMS), Taiwan. He
would like to thank the institute for its hospitality.

\section{The weighted CR Paneitz Operator}

We first introduce some basic materials in a strictly pseudoconvex CR $%
(2n+1) $-manifold $(M,J,\theta )$. Let $(M,J,\theta )$ be a $(2n+1)$%
-dimensional, orientable, contact manifold with contact structure $\xi =\ker
\theta $. A CR structure compatible with $\xi $ is an endomorphism $J:\xi
\rightarrow \xi $ such that $J^{2}=-1$. We also assume that $J$ satisfies
the following integrability condition: If $X$ and $Y$ are in $\xi $, then so
is $[JX,Y]+[X,JY]$ and $J([JX,Y]+[X,JY])=[JX,JY]-[X,Y]$. A CR structure $J$
can extend to $\mathbb{C}\mathbf{\otimes }\xi $ and decomposes $\mathbb{C}%
\mathbf{\otimes }\xi $ into the direct sum of $T_{1,0}$ and $T_{0,1}$ which
are eigenspaces of $J$ with respect to eigenvalues $\sqrt{-1}$ and $-\sqrt{-1%
}$, respectively. A manifold $M$ with a CR structure is called a CR
manifold. A pseudohermitian structure compatible with $\xi $ is a $CR$
structure $J$ compatible with $\xi $ together with a choice of contact form $%
\theta $. Such a choice determines a unique real vector field $T$ transverse
to $\xi $, which is called the characteristic vector field of $\theta $,
such that ${\theta }(T)=1$ and $\mathcal{L}_{T}{\theta }=0$ or $d{\theta }(T,%
{\cdot })=0$. Let $\left\{ T,Z_{\beta },Z_{\overline{\beta }}\right\} $ be a
frame of $TM\otimes \mathbb{C}$, where $Z_{\beta }$ is any local frame of $%
T_{1,0},\ Z_{\overline{\beta }}=\overline{Z_{\beta }}\in T_{0,1}$ and $T$ is
the characteristic vector field. Then $\{\theta ,\theta ^{\beta },\theta ^{%
\overline{\beta }}\}$, which is the coframe dual to $\left\{ T,Z_{\beta },Z_{%
\overline{\beta }}\right\} $, satisfies 
\begin{equation}
\begin{array}{c}
d\theta =\sqrt{-1}h_{\beta \overline{\gamma }}\theta ^{\beta }\wedge \theta
^{\overline{\gamma }},%
\end{array}
\label{dtheta}
\end{equation}%
for some positive definite Hermitian matrix of functions $(h_{\beta 
\overline{\gamma }})$. Actually we can always choose $Z_{\beta }$ such that $%
h_{\beta \overline{\gamma }}=\delta _{\beta \gamma }$; hence, throughout
this note, we assume $h_{\beta \overline{\gamma }}=\delta _{\beta \gamma }$.

The Levi form $\left\langle \ ,\ \right\rangle $ is the Hermitian form on $%
T_{1,0}$ defined by%
\begin{equation*}
\begin{array}{c}
\left\langle Z,W\right\rangle =-\sqrt{-1}\left\langle d\theta ,Z\wedge 
\overline{W}\right\rangle .%
\end{array}%
\end{equation*}%
We can extend $\left\langle \ ,\ \right\rangle $ to $T_{0,1}$ by defining $%
\left\langle \overline{Z},\overline{W}\right\rangle =\overline{\left\langle
Z,W\right\rangle }$ for all $Z,W\in T_{1,0}$. The Levi form induces
naturally a Hermitian form on the dual bundle of $T_{1,0}$, also denoted by $%
\left\langle \ ,\ \right\rangle $, and hence on all the induced tensor
bundles. Integrating the Hermitian form (when acting on sections) over $M$
with respect to the volume form $d\mu =\theta \wedge (d\theta )^{n}$, we get
an inner product on the space of sections of each tensor bundle.

The pseudohermitian connection of $(J,\theta )$ is the connection $\nabla $
on $TM\otimes \mathbb{C}$ (and extended to tensors) given in terms of a
local frame $Z_{\beta }\in T_{1,0}$ by%
\begin{equation*}
\nabla Z_{\beta }=\theta _{\beta }{}^{\gamma }\otimes Z_{\gamma },\quad
\nabla Z_{\overline{\beta }}=\theta _{\overline{\beta }}{}^{\overline{\gamma 
}}\otimes Z_{\overline{\gamma }},\quad \nabla T=0,
\end{equation*}%
where $\theta _{\beta }{}^{\gamma }$ are the $1$-forms uniquely determined
by the following equations:%
\begin{equation}
\begin{split}
d\theta ^{\beta }& =\theta ^{\gamma }\wedge \theta _{\gamma }{}^{\beta
}+\theta \wedge \tau ^{\beta }, \\
\tau _{\beta }\wedge \theta ^{\beta }& =0=\theta _{\beta }{}^{\gamma
}+\theta _{\overline{\beta }}{}^{\overline{\gamma }}.
\end{split}
\label{structure equs}
\end{equation}%
We can write (by Cartan lemma) $\tau _{\beta }=A_{\beta \gamma }\theta
^{\gamma }$ with $A_{\beta \gamma }=A_{\gamma \beta }$. The curvature of the
Tanaka-Webster connection, expressed in terms of the coframe $\{\theta
=\theta ^{0},\theta ^{\beta },\theta ^{\overline{\beta }}\}$, is 
\begin{equation*}
\begin{split}
\Pi _{\beta }{}^{\gamma }& =\overline{\Pi _{\bar{\beta}}{}^{\overline{\gamma 
}}}=d\theta _{\beta }{}^{\gamma }-\theta _{\beta }{}^{\sigma }\wedge \theta
_{\sigma }{}^{\gamma }, \\
\Pi _{0}{}^{\beta }& =\Pi _{\beta }{}^{0}=\Pi _{0}{}^{\bar{\beta}}=\Pi _{%
\bar{\beta}}{}^{0}=\Pi _{0}{}^{0}=0.
\end{split}%
\end{equation*}%
Webster showed that $\Pi _{\beta }{}^{\gamma }$ can be written 
\begin{equation*}
\begin{array}{c}
\Pi _{\beta }{}^{\gamma }=R_{\beta }{}^{\gamma }{}_{\rho \bar{\sigma}}\theta
^{\rho }\wedge \theta ^{\bar{\sigma}}+W_{\beta }{}^{\gamma }{}_{\rho }\theta
^{\rho }\wedge \theta -W^{\gamma }{}_{\beta \bar{\rho}}\theta ^{\bar{\rho}%
}\wedge \theta +\sqrt{-1}(\theta _{\beta }\wedge \tau ^{\gamma }-\tau
_{\beta }\wedge \theta ^{\gamma })%
\end{array}%
\end{equation*}%
where the coefficients satisfy 
\begin{equation*}
\begin{array}{c}
R_{\beta \overline{\gamma }\rho \bar{\sigma}}=\overline{R_{\gamma \bar{\beta}%
\sigma \bar{\rho}}}=R_{\overline{\gamma }\beta \bar{\sigma}\rho }=R_{\rho 
\overline{\gamma }\beta \bar{\sigma}},\ \ W_{\beta \overline{\gamma }\rho
}=W_{\rho \overline{\gamma }\beta }.%
\end{array}%
\end{equation*}

We will denote components of covariant derivatives with indices preceded by
comma; thus write $A_{\rho \beta ,\gamma }$. The indices $\{0,\beta ,%
\overline{\beta }\}$ indicate derivatives with respect to $\{T,Z_{\beta },Z_{%
\overline{\beta }}\}$. For derivatives of a scalar function, we will often
omit the comma, for instance, $u_{\beta }=Z_{\beta }u,\ u_{\gamma \bar{\beta}%
}=Z_{\bar{\beta}}Z_{\gamma }u-\theta _{\gamma }{}^{\rho }(Z_{\bar{\beta}%
})Z_{\rho }u,\ u_{0}=Tu$ for a smooth function $u$ .

For a real function $u$, the subgradient $\nabla _{b}$ is defined by $\nabla
_{b}u\in \xi $ and $\left\langle Z,\nabla _{b}u\right\rangle =du(Z)$ for all
vector fields $Z$ tangent to contact plane. Locally $\nabla _{b}u=u^{\beta
}Z_{\beta }+u^{\overline{\beta }}Z_{\overline{\beta }}$. We can use the
connection to define the subhessian as the complex linear map 
\begin{equation*}
\begin{array}{c}
(\nabla ^{H})^{2}u:T_{1,0}\oplus T_{0,1}\rightarrow T_{1,0}\oplus T_{0,1}%
\text{\ \textrm{by}\ }(\nabla ^{H})^{2}u(Z)=\nabla _{Z}\nabla _{b}u.%
\end{array}%
\end{equation*}%
In particular, 
\begin{equation*}
\begin{array}{c}
|\nabla _{b}u|^{2}=2\sum_{\beta }u_{\beta }u^{\beta },\quad |\nabla
_{b}^{2}u|^{2}=2\sum_{\beta ,\gamma }(u_{\beta \gamma }u^{\beta \gamma
}+u_{\beta \overline{\gamma }}u^{\beta \overline{\gamma }}).%
\end{array}%
\end{equation*}%
Also the sub-Laplacian is defined by 
\begin{equation*}
\begin{array}{c}
\Delta _{b}u=Tr\left( (\nabla ^{H})^{2}u\right) =\sum_{\beta }(u_{\beta
}{}^{\beta }+u_{\overline{\beta }}{}^{\overline{\beta }}).%
\end{array}%
\end{equation*}%
The pseudohermitian Ricci tensor and the torsion tensor on $T_{1,0}$ are
defined by 
\begin{equation*}
\begin{array}{l}
Ric(X,Y)=R_{\gamma \bar{\beta}}X^{\gamma }Y^{\bar{\beta}} \\ 
Tor(X,Y)=\sqrt{-1}\sum_{\gamma ,\beta }(A_{\overline{\gamma }\bar{\beta}}X^{%
\overline{\gamma }}Y^{\bar{\beta}}-A_{\gamma \beta }X^{\gamma }Y^{\beta }),%
\end{array}%
\end{equation*}%
where $X=X^{\gamma }Z_{\gamma },\ Y=Y^{\beta }Z_{\beta }$.

Let $M$ be a compact strictly pseudoconvex CR $(2n+1)$-manifold with a
weighted volume measure $d\sigma =e^{-\phi (x)}d\mu $ for a given smooth
function $\phi $. In this section, we define the weighted CR Paneitz
operator $P_{0}^{\phi }$. First we recall the definition of the CR Paneitz
operator $P_{0}.$

\begin{definition}
\label{d1} (\cite{gl}) Let $(M,J,\theta )$ be a compact strictly
pseudoconvex CR $(2n+1)$-manifold. We define 
\begin{equation*}
\begin{array}{c}
Pf=\sum_{\gamma ,\beta =1}^{n}(f_{\overline{\gamma }\;\ \beta }^{\,\overline{%
\text{ }\gamma }}+\sqrt{-1}nA_{\beta \gamma }f^{\gamma })\theta ^{\beta
}=\sum_{\beta =1}^{n}(P_{\beta }f)\theta ^{\beta },%
\end{array}%
\end{equation*}%
which is an operator that characterizes CR-pluriharmonic functions. Here 
\begin{equation*}
\begin{array}{c}
P_{\beta }f=\sum_{\gamma =1}^{n}(f_{\overline{\gamma }\;\ \beta }^{\,\text{\ 
}\overline{\gamma }}+\sqrt{-1}nA_{\beta \gamma }f^{\gamma }),\text{ \ }\beta
=1,\cdots ,n,%
\end{array}%
\end{equation*}%
and $\overline{P}f=\sum_{\beta =1}^{n}\left( \overline{P}_{\beta }f\right)
\theta ^{\overline{\beta }}$, the conjugate of $P$. The CR Paneitz operator $%
P_{0}$ is defined by 
\begin{equation*}
\begin{array}{c}
P_{0}f=4[\delta _{b}(Pf)+\overline{\delta }_{b}(\overline{P}f)],%
\end{array}%
\end{equation*}%
where $\delta _{b}$ is the divergence operator that takes $(1,0)$-forms to
functions by $\delta _{b}(\sigma _{\beta }\theta ^{\beta })=\sigma _{\beta
}^{\;\ \beta }$, and similarly, $\overline{\delta }_{b}(\sigma _{\overline{%
\beta }}\theta ^{\overline{\beta }})=\sigma _{\overline{\beta }}^{\;\ 
\overline{\beta }}$.
\end{definition}

One can define (\cite{gl}) the purely holomorphic second-order operator $Q$
by 
\begin{equation*}
\begin{array}{c}
Qf:=2\sqrt{-1}(A^{\alpha \beta }f_{\alpha }),_{\beta }.%
\end{array}%
\end{equation*}%
Note that $[\Delta _{b},T]f=2{\func{Im}}Qf$ and observe that 
\begin{equation}
\begin{array}{lll}
P_{0}f & = & 2(\Delta _{b}^{2}+n^{2}T^{2})f-4n\mathrm{Re}Qf \\ 
& = & 2\square _{b}\overline{\square }_{b}f-4nQf\text{ }=\text{ }2\overline{%
\square }_{b}\square _{b}f-4n\overline{Q}f,%
\end{array}
\label{3}
\end{equation}%
for $\square _{b}f=(-\Delta _{b}+n\sqrt{-1}T)f=-2f_{\overline{\beta }}$ $^{%
\overline{\beta }}$ be the Kohn Laplacian operator.

With respect to the weighted volume measure $d\sigma ,$ we define the purely
holomorphic second-order operator $Q^{\phi }$ by%
\begin{equation*}
\begin{array}{c}
Q^{\phi }f:=Qf-2\sqrt{-1}(A^{\alpha \beta }f_{\alpha }\phi _{\beta })=2\sqrt{%
-1}e^{\phi }(e^{-\phi }A^{\alpha \beta }f_{\alpha }),_{\beta },%
\end{array}%
\end{equation*}%
and thus we have%
\begin{equation}
\begin{array}{c}
\lbrack \mathcal{L},T]f=2{\func{Im}}Q^{\phi }f+\left\langle \nabla _{b}\phi
_{0},\nabla _{b}f\right\rangle .%
\end{array}
\label{3a}
\end{equation}

\begin{definition}
\label{d2} We define the weighted CR Paneitz operator $P_{0}^{\phi }$ as
follows%
\begin{equation*}
\begin{array}{c}
P_{0}^{\phi }f=4e^{\phi }[\delta _{b}(e^{-\phi }P^{\phi }f)+\overline{\delta 
}_{b}(e^{-\phi }\overline{P}^{\phi }f)].%
\end{array}%
\end{equation*}%
Here $P^{\phi }f=\sum_{\beta =1}^{n}(P_{\beta }^{\phi }f)\theta ^{\beta }$
and $\overline{P}^{\phi }f=\sum_{\beta =1}^{n}(\overline{P}_{\beta }^{\phi
}f)\theta ^{\overline{\beta }}$, the conjugate of $P^{\phi }$ with 
\begin{equation*}
\begin{array}{c}
P_{\beta }^{\phi }f=P_{\beta }f-\frac{1}{2}\left\langle \nabla _{b}\phi
,\nabla _{b}f\right\rangle _{,\beta }+\frac{n}{2}\sqrt{-1}f_{0}\phi _{\beta
}.%
\end{array}%
\end{equation*}
\end{definition}

We explain why the weighted CR Paneitz operator $P_{0}^{\phi }$ to be
defined in this way. Comparing with the Riemannian case, we have the extra
term $\langle J\nabla _{b}f,\nabla _{b}f_{0}\rangle $ in the CR Bochner
formula (\ref{A}) for $\Delta _{b}$, which is hard to deal. From \cite{cc2},
we can relate $\langle J\nabla _{b}f,\nabla _{b}f_{0}\rangle $ with $\langle
\nabla _{b}f,\nabla _{b}\Delta _{b}f\rangle $ by%
\begin{equation*}
\begin{array}{c}
\langle J\nabla _{b}f,\nabla _{b}f_{0}\rangle =\frac{1}{n}\langle \nabla
_{b}f,\nabla _{b}\Delta _{b}f\rangle -\frac{2}{n}\langle Pf+\overline{P}%
f,d_{b}f\rangle -2Tor(\nabla _{b}f_{\mathbb{C}},\nabla _{b}f_{\mathbb{C}}),%
\end{array}%
\end{equation*}%
then by integral with respect to the volume measure $d\mu =\theta \wedge
\left( d\theta \right) ^{n}$ yields%
\begin{equation}
\begin{array}{c}
n^{2}\int_{M}f_{0}^{2}d\mu =\int_{M}\left( \Delta _{b}f\right) ^{2}d\mu -%
\frac{1}{2}\int_{M}fP_{0}fd\mu +n\int_{M}Tor(\nabla _{b}f,\nabla _{b}f)d\mu .%
\end{array}
\label{5}
\end{equation}%
This integral says that the integral of the square of $f_{0}$ can be replace
by the integral of the square of $\Delta _{b}f$ and the integral of the CR
Paneitz operator $P_{0}.$ For the CR Bochner formula (\ref{Bochnerformula})
for $\mathcal{L}$, we also have the extra term $\langle J\nabla _{b}f,\nabla
_{b}f_{0}\rangle -f_{0}\langle J\nabla _{b}f,\nabla _{b}\phi \rangle $,
which can be related by%
\begin{equation*}
\begin{array}{c}
\langle J\nabla _{b}f,\nabla _{b}f_{0}\rangle -f_{0}\langle J\nabla
_{b}f,\nabla _{b}\phi \rangle =\frac{1}{n}\langle \nabla _{b}f,\nabla _{b}%
\mathcal{L}f\rangle -\frac{2}{n}\langle P^{\phi }f+\overline{P}^{\phi
}f,d_{b}f\rangle -Tor(\nabla _{b}f,\nabla _{b}f).%
\end{array}%
\end{equation*}%
Then by integral with respect to the weighted volume measure $d\sigma
=e^{-\phi }\theta \wedge \left( d\theta \right) ^{n},$ one gets%
\begin{equation*}
\begin{array}{c}
n^{2}\int_{M}f_{0}^{2}d\sigma =\int_{M}\left( \mathcal{L}f\right)
^{2}d\sigma -\frac{1}{2}\int_{M}fP_{0}^{\phi }fd\sigma +n\int_{M}Tor(\nabla
_{b}f,\nabla _{b}f)d\sigma .%
\end{array}%
\end{equation*}%
This integral have the same type as (\ref{5}) when we replace $\mathcal{L},$ 
$P_{0}^{\phi }$ and $d\sigma $ by $\Delta _{b},$ $P_{0}$ and $d\mu ,$
respectively.

First we compare the relation between $P_{0}^{\phi }$ and $P_{0}.$

\begin{lemma}
Let $(M,J,\theta )$ be a compact strictly pseudoconvex CR $(2n+1)$-manifold.
We obtain%
\begin{equation}
\begin{array}{c}
P_{0}^{\phi }f=P_{0}f-4\langle Pf+\overline{P}f,d_{b}\phi \rangle -2\mathcal{%
L}\left\langle \nabla _{b}\phi ,\nabla _{b}f\right\rangle -2n\langle J\nabla
_{b}\phi ,\nabla _{b}f_{0}\rangle -2n^{2}f_{0}\phi _{0}.%
\end{array}
\label{4a}
\end{equation}
\end{lemma}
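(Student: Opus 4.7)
The plan is to unfold both sides directly from Definition \ref{d2} and match term by term. The product rule gives
\begin{equation*}
e^{\phi}\delta_b(e^{-\phi}P^{\phi}f) + e^{\phi}\overline{\delta}_b(e^{-\phi}\overline{P}^{\phi}f) = \bigl[\delta_b(P^{\phi}f) + \overline{\delta}_b(\overline{P}^{\phi}f)\bigr] - \bigl[\phi^{\beta}P_{\beta}^{\phi}f + \phi^{\overline{\beta}}\overline{P}_{\overline{\beta}}^{\phi}f\bigr],
\end{equation*}
and I then substitute $P_{\beta}^{\phi}f = P_{\beta}f - \tfrac{1}{2}\langle \nabla_b\phi,\nabla_b f\rangle_{,\beta} + \tfrac{n}{2}\sqrt{-1}\,f_0\phi_{\beta}$ together with its complex conjugate (in which the sign of the $\sqrt{-1}f_0\phi_{\beta}$ term flips because $f,\phi$ are real). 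This splits the right-hand side into three groups, one for each summand of $P_{\beta}^{\phi}f$.

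For the first group, the divergences assemble into $4[\delta_b(Pf) + \overline{\delta}_b(\overline{P}f)] = P_0 f$ by Definition \ref{d1}, while the correction pieces combine to $-4\langle Pf + \overline{P}f, d_b\phi\rangle$. Setting $G := \langle\nabla_b\phi,\nabla_b f\rangle$, the second group contributes $-2\Delta_b G$ from the divergences and $+2\langle\nabla_b\phi,\nabla_b G\rangle$ from the corrections, which telescope into $-2\mathcal{L}G$ by the definition of the Witten sub-Laplacian. The correction pieces of the third group reduce to a multiple of $f_0(\phi^{\beta}\phi_{\beta} - \phi^{\overline{\beta}}\phi_{\overline{\beta}})$, which vanishes because $\phi$ is real, so only the divergence part of the third group survives.

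For that surviving divergence part I expand
\begin{equation*}
(f_0\phi_{\beta})^{,\beta} - (f_0\phi_{\overline{\beta}})^{,\overline{\beta}} = \bigl(f_{0,\overline{\beta}}\phi_{\beta} - f_{0,\beta}\phi_{\overline{\beta}}\bigr) + f_0\bigl(\phi_{\beta,\overline{\beta}} - \phi_{\overline{\beta},\beta}\bigr).
\end{equation*}
The first bracket is identified with $\sqrt{-1}\langle J\nabla_b\phi, \nabla_b f_0\rangle$ (up to sign) using the fact that $J$ acts by multiplication by $\sqrt{-1}$ on $T_{1,0}$, so after multiplication by the prefactor $2n\sqrt{-1}$ it produces $-2n\langle J\nabla_b\phi,\nabla_b f_0\rangle$. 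The second bracket is handled by the Tanaka--Webster scalar commutation formula $\phi_{\beta\overline{\gamma}} - \phi_{\overline{\gamma}\beta} = \sqrt{-1}h_{\beta\overline{\gamma}}\phi_0$, which on contraction with $h^{\beta\overline{\gamma}}$ yields $\sqrt{-1}n\phi_0$ and produces $-2n^2 f_0\phi_0$. Adding all three groups gives (\ref{4a}).

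The main obstacle I foresee is bookkeeping: propagating the conjugation sign flip correctly through the $\sqrt{-1}f_0\phi_{\beta}$ piece of $\overline{P}_{\overline{\beta}}^{\phi}f$, managing raised versus lowered $(1,0)/(0,1)$ indices so that the divergence and correction terms reassemble into the intrinsic operators $\mathcal{L}$ and $J$, and applying the scalar commutator with the right sign convention. No individual step is deep; the work is in the organization.
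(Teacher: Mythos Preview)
Your proposal is correct and follows essentially the same route as the paper: apply the product rule to strip off $e^{-\phi}$, substitute the definition \eqref{40} of $P_\beta^\phi f$, and regroup the three summands into $P_0f$, $-2\mathcal{L}\langle\nabla_b\phi,\nabla_bf\rangle$, and the $J$/commutator terms. Your write-up is in fact more explicit than the paper's---you spell out why the correction part of the third group vanishes and how the scalar commutator $\phi_{\beta\overline\gamma}-\phi_{\overline\gamma\beta}=\sqrt{-1}h_{\beta\overline\gamma}\phi_0$ produces the $-2n^2f_0\phi_0$ term---whereas the paper passes directly to the final line.
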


\begin{proof}
By the definition of $P_{0}^{\phi }$ and (\ref{40}), we compute%
\begin{equation*}
\begin{array}{lll}
P_{0}^{\phi }f & = & 4[\delta _{b}(P^{\phi }f)+\overline{\delta }_{b}(%
\overline{P}^{\phi }f)]-4\langle P^{\phi }f+\overline{P}^{\phi }f,d_{b}\phi
\rangle \\ 
& = & 4(P_{\beta }f-\frac{1}{2}\left\langle \nabla _{b}\phi ,\nabla
_{b}f\right\rangle _{,\beta }+\frac{n}{2}\sqrt{-1}f_{0}\phi _{\beta
})^{,\beta } \\ 
&  & +4(P_{\overline{\beta }}f-\frac{1}{2}\left\langle \nabla _{b}\phi
,\nabla _{b}f\right\rangle _{,\overline{\beta }}-\frac{n}{2}\sqrt{-1}%
f_{0}\phi _{\overline{\beta }})^{,\overline{\beta }} \\ 
&  & -4\langle Pf+\overline{P}f,d_{b}\phi \rangle -2\left\langle \nabla
_{b}\left\langle \nabla _{b}\phi ,\nabla _{b}f\right\rangle ,\nabla _{b}\phi
\right\rangle \\ 
& = & P_{0}f-4\langle Pf+\overline{P}f,d_{b}\phi \rangle -2\mathcal{L}%
\left\langle \nabla _{b}\phi ,\nabla _{b}f\right\rangle \\ 
&  & -2n\left( \langle J\nabla _{b}\phi ,\nabla _{b}f_{0}\rangle +nf_{0}\phi
_{0}\right) .%
\end{array}%
\end{equation*}
\end{proof}

Second, we define the weighted Kohn Laplacian operator as 
\begin{equation*}
\begin{array}{c}
\square _{b}^{\phi }f:=(-\mathcal{L}+n\sqrt{-1}T)f,%
\end{array}%
\end{equation*}%
then we have the similar formula for $P_{0}^{\phi }$ like the expression for 
$P_{0}$ (\ref{3}).

\begin{lemma}
\label{lemma}Let $(M,J,\theta )$ be a compact strictly pseudoconvex CR $%
(2n+1)$-manifold. We have%
\begin{equation}
\begin{array}{lll}
P_{0}^{\phi }f & = & 2\left( \mathcal{L}^{2}+n^{2}T^{2}\right) f-4n{\func{Re}%
}Q^{\phi }f-2n^{2}\phi _{0}f_{0} \\ 
& = & 2\square _{b}^{\phi }\overline{\square }_{b}^{\phi }f-4nQ^{\phi
}f-2n^{2}\phi _{0}f_{0}-2n\sqrt{-1}\left\langle \nabla _{b}\phi _{0},\nabla
_{b}f\right\rangle \\ 
& = & 2\overline{\square }_{b}^{\phi }\square _{b}^{\phi }f-4n\overline{Q}%
^{\phi }f-2n^{2}\phi _{0}f_{0}+2n\sqrt{-1}\left\langle \nabla _{b}\phi
_{0},\nabla _{b}f\right\rangle .%
\end{array}
\label{4}
\end{equation}
\end{lemma}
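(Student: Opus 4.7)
The plan is to establish the first equality by substituting the unweighted Paneitz identity (\ref{3}) into formula (\ref{4a}) and converting $\Delta_b^2$ into $\mathcal{L}^2$ plus $\phi$-corrections, and then to derive the second and third equalities from the first using the algebraic structure of $\square_b^\phi$ and $\overline{\square}_b^\phi$ together with the commutator identity (\ref{3a}).

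For the first equality, I would begin from (\ref{4a}),
$$P_0^\phi f = P_0 f - 4\langle Pf+\overline{P}f, d_b\phi\rangle - 2\mathcal{L}\langle\nabla_b\phi,\nabla_b f\rangle - 2n\langle J\nabla_b\phi,\nabla_b f_0\rangle - 2n^2 f_0\phi_0,$$
and substitute $P_0 f = 2(\Delta_b^2+n^2T^2)f - 4n\operatorname{Re}Qf$ from (\ref{3}). Writing $\Delta_b = \mathcal{L} + L_\phi$ with $L_\phi g := \langle\nabla_b\phi,\nabla_b g\rangle$, I would expand $\Delta_b^2 f = \mathcal{L}^2 f + \mathcal{L}(L_\phi f) + L_\phi(\mathcal{L}f) + L_\phi(L_\phi f)$. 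The $-2n^2 f_0\phi_0$ piece matches directly, so the remaining task is to verify that all the $\phi$-correction terms combine into $4n\operatorname{Re}Qf - 4n\operatorname{Re}Q^\phi f$. By the definition $Q^\phi f - Qf = -2\sqrt{-1}A^{\alpha\beta}f_\alpha\phi_\beta$, the target is a manifestly holomorphic-torsion expression, and the verification proceeds by applying the pseudohermitian commutations $[Z_\alpha, Z_{\overline{\beta}}] = -\sqrt{-1}h_{\alpha\overline{\beta}}T$ and the commutator of $T$ with $Z_\alpha$ (which produces the torsion $A_{\alpha\beta}$) to realign the third-order mixed derivatives arising in the correction pieces.

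Once the first equality is in hand, the second and third follow quickly. From $\square_b^\phi = -\mathcal{L}+n\sqrt{-1}T$ and $\overline{\square}_b^\phi = -\mathcal{L}-n\sqrt{-1}T$ we obtain, by direct expansion,
$$\square_b^\phi\overline{\square}_b^\phi f = (\mathcal{L}^2+n^2T^2)f + n\sqrt{-1}[\mathcal{L},T]f, \qquad \overline{\square}_b^\phi\square_b^\phi f = (\mathcal{L}^2+n^2T^2)f - n\sqrt{-1}[\mathcal{L},T]f.$$
Substituting (\ref{3a}), namely $[\mathcal{L},T]f = 2\operatorname{Im}Q^\phi f + \langle\nabla_b\phi_0,\nabla_b f\rangle$, and writing $2\sqrt{-1}\operatorname{Im}Q^\phi f = (Q^\phi-\overline{Q}^\phi)f$ (valid since $f$ is real), one finds $2\square_b^\phi\overline{\square}_b^\phi f = 2(\mathcal{L}^2+n^2T^2)f + 2n(Q^\phi-\overline{Q}^\phi)f + 2n\sqrt{-1}\langle\nabla_b\phi_0,\nabla_b f\rangle$, and analogously for $2\overline{\square}_b^\phi\square_b^\phi f$ with the opposite sign on the last two terms. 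Comparing with the first equality $P_0^\phi f = 2(\mathcal{L}^2+n^2T^2)f - 2n(Q^\phi+\overline{Q}^\phi)f - 2n^2\phi_0 f_0$ and rearranging immediately produces the coefficients $-4nQ^\phi f$ and $-4n\overline{Q}^\phi f$ together with the signed $\langle\nabla_b\phi_0,\nabla_b f\rangle$ terms of the second and third formulas.

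The main obstacle will be Step 1, the bookkeeping of $\phi$-correction pieces. I expect the expansions of $\mathcal{L}(L_\phi f)$, $L_\phi(\mathcal{L}f)$, and $L_\phi(L_\phi f)$ to produce third-order mixed derivatives that must collapse, after commuting $Z_\alpha$ past $Z_{\overline{\beta}}$, into torsion pairings of the form $A^{\alpha\beta}f_\alpha\phi_\beta$. The $\langle J\nabla_b\phi,\nabla_b f_0\rangle$ term in (\ref{4a}) supplies the complementary $T$-derivative contributions, while $\langle Pf+\overline{P}f,d_b\phi\rangle$, upon expanding $P_\beta f = f_{\overline{\gamma}\;\;\beta}^{\;\overline{\gamma}} + \sqrt{-1}\,nA_{\beta\gamma}f^\gamma$, furnishes the last set of cancellations. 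Once all these cancellations are verified, the second and third equalities follow essentially automatically from the algebraic manipulation above.
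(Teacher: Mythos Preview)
Your proposal is correct and relies on the same ingredients as the paper's proof: the expansion $\Delta_b = \mathcal{L} + \langle\nabla_b\phi,\nabla_b\cdot\rangle$, the commutator formula~(\ref{3a}), and the key identity $n\langle J\nabla_b\phi,\nabla_b f_0\rangle = \langle\nabla_b\phi,\nabla_b\Delta_b f\rangle - 2\langle Pf+\overline{P}f,d_b\phi\rangle + 2n\sqrt{-1}(A^{\overline{\alpha}\overline{\beta}}f_{\overline{\alpha}}\phi_{\overline{\beta}} - A^{\alpha\beta}f_\alpha\phi_\beta)$, which is exactly what collapses your ``$\phi$-correction'' terms in Step~1. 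The only organizational difference is that the paper computes $\square_b^\phi\overline{\square}_b^\phi f$ directly (thereby establishing the second equality first, with the first following from the commutator computation), whereas you establish the first equality first and then deduce the second and third; both routes are equivalent and of comparable length.
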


\begin{proof}
By the straightforward calculation, we have%
\begin{equation*}
\begin{array}{lll}
\square _{b}^{\phi }\overline{\square }_{b}^{\phi }f & = & (-\mathcal{L}+n%
\sqrt{-1}T)(-\mathcal{L}f-n\sqrt{-1}f_{0}) \\ 
& = & \left( \mathcal{L}^{2}f+n^{2}f_{00}\right) +n\sqrt{-1}[\mathcal{L},T]f
\\ 
& = & \left( \mathcal{L}^{2}f+n^{2}f_{00}\right) +n\sqrt{-1}\left( 2{\func{Im%
}}Q^{\phi }f+\left\langle \nabla _{b}\phi _{0},\nabla _{b}f\right\rangle
\right)%
\end{array}%
\end{equation*}%
and%
\begin{equation*}
\begin{array}{lll}
\square _{b}^{\phi }\overline{\square }_{b}^{\phi }f & = & (-\mathcal{L}+n%
\sqrt{-1}T)(-\mathcal{L}f-n\sqrt{-1}f_{0}) \\ 
& = & (-\mathcal{L}+n\sqrt{-1}T)(-\Delta _{b}f+\left\langle \nabla _{b}\phi
,\nabla _{b}f\right\rangle -n\sqrt{-1}f_{0}) \\ 
& = & \mathcal{L}(\Delta _{b}f-\left\langle \nabla _{b}\phi ,\nabla
_{b}f\right\rangle +n\sqrt{-1}f_{0}) \\ 
&  & -n\sqrt{-1}T(\Delta _{b}f-\left\langle \nabla _{b}\phi ,\nabla
_{b}f\right\rangle +n\sqrt{-1}f_{0}) \\ 
& = & \Delta _{b}(\Delta _{b}f-\left\langle \nabla _{b}\phi ,\nabla
_{b}f\right\rangle +n\sqrt{-1}f_{0}) \\ 
&  & -\left\langle \nabla _{b}\phi ,\nabla _{b}(\Delta _{b}f-\left\langle
\nabla _{b}\phi ,\nabla _{b}f\right\rangle +n\sqrt{-1}f_{0})\right\rangle \\ 
&  & -n\sqrt{-1}T(\Delta _{b}f-\left\langle \nabla _{b}\phi ,\nabla
_{b}f\right\rangle +n\sqrt{-1}f_{0}) \\ 
& = & \Delta _{b}^{2}f+n^{2}f_{00}-\mathcal{L}\left\langle \nabla _{b}\phi
,\nabla _{b}f\right\rangle +n\sqrt{-1}[\mathcal{L},T]f-\left\langle \nabla
_{b}\phi ,\nabla _{b}\Delta _{b}f\right\rangle \\ 
& = & \Delta _{b}^{2}f+n^{2}f_{00}-\mathcal{L}\left\langle \nabla _{b}\phi
,\nabla _{b}f\right\rangle +n\sqrt{-1}\mathcal{L}f_{0}-\left\langle \nabla
_{b}\phi ,\nabla _{b}\Delta _{b}f\right\rangle \\ 
&  & -n\sqrt{-1}[\Delta _{b}f_{0}-2{\func{Im}}Qf-f_{\overline{\beta }}\phi
_{\beta 0}-f_{\beta }\phi _{\overline{\beta }0} \\ 
&  & \text{ \ \ \ \ \ \ \ \ \ \ \ }-\left\langle \nabla _{b}\phi ,\nabla
_{b}f_{0}\right\rangle +A^{\overline{\alpha }\overline{\beta }}f_{\overline{%
\alpha }}\phi _{\overline{\beta }}+A^{\alpha \beta }f_{\alpha }\phi _{\beta }%
\mathcal{]} \\ 
& = & \Delta _{b}^{2}f+n^{2}f_{00}-\mathcal{L}\left\langle \nabla _{b}\phi
,\nabla _{b}f\right\rangle -\left\langle \nabla _{b}\phi ,\nabla _{b}\Delta
_{b}f\right\rangle \\ 
&  & +n\sqrt{-1}[2{\func{Im}}Qf+(f_{\overline{\beta }}\phi _{0\beta
}+f_{\beta }\phi _{0\overline{\beta }})-2(A^{\overline{\alpha }\overline{%
\beta }}f_{\overline{\alpha }}\phi _{\overline{\beta }}+A^{\alpha \beta
}f_{\alpha }\phi _{\beta })]%
\end{array}%
\end{equation*}%
By using the equation%
\begin{equation*}
\begin{array}{c}
n\langle J\nabla _{b}\phi ,\nabla _{b}f_{0}\rangle =\langle \nabla _{b}\phi
,\nabla _{b}\Delta _{b}f\rangle -2\langle Pf+\overline{P}f,d_{b}\phi \rangle
+2n\sqrt{-1}(A^{\overline{\alpha }\overline{\beta }}f_{\overline{\alpha }%
}\phi _{\overline{\beta }}-A^{\alpha \beta }f_{\alpha }\phi _{\beta }),%
\end{array}%
\end{equation*}%
we deduce%
\begin{equation*}
\begin{array}{lll}
\square _{b}^{\phi }\overline{\square }_{b}^{\phi }f & = & \Delta
_{b}^{2}f+n^{2}f_{00}-\mathcal{L}\left\langle \nabla _{b}\phi ,\nabla
_{b}f\right\rangle -n\left\langle J\nabla _{b}\phi ,\nabla
_{b}f_{0}\right\rangle -2\langle Pf+\overline{P}f,d_{b}\phi \rangle \\ 
&  & +2n\sqrt{-1}(A^{\overline{\alpha }\overline{\beta }}f_{\overline{\alpha 
}}\phi _{\overline{\beta }}-A^{\alpha \beta }f_{\alpha }\phi _{\beta })+2n{%
\func{Im}}Qf \\ 
&  & +n\sqrt{-1}\left\langle \nabla _{b}\phi _{0},\nabla _{b}f\right\rangle
-2n\sqrt{-1}(A^{\overline{\alpha }\overline{\beta }}f_{\overline{\alpha }%
}\phi _{\overline{\beta }}+A^{\alpha \beta }f_{\alpha }\phi _{\beta }) \\ 
& = & \frac{1}{2}P_{0}f+2n{\func{Re}}Qf-\mathcal{L}\left\langle \nabla
_{b}\phi ,\nabla _{b}f\right\rangle -n\left\langle J\nabla _{b}\phi ,\nabla
_{b}f_{0}\right\rangle -2\langle Pf+\overline{P}f,d_{b}\phi \rangle \\ 
&  & +2n\func{Im}Qf-4n\sqrt{-1}A^{\alpha \beta }f_{\alpha }\phi _{\beta }+n%
\sqrt{-1}\left\langle \nabla _{b}\phi _{0},\nabla _{b}f\right\rangle .%
\end{array}%
\end{equation*}%
Finally, we obtain 
\begin{equation*}
\begin{array}{lll}
2\square _{b}^{\phi }\overline{\square }_{b}^{\phi }f & = & P_{0}f-2\mathcal{%
L}\left\langle \nabla _{b}\phi ,\nabla _{b}f\right\rangle -2n\left\langle
J\nabla _{b}\phi ,\nabla _{b}f_{0}\right\rangle -4\langle Pf+\overline{P}%
f,d_{b}\phi \rangle \\ 
&  & +4n(Qf-2\sqrt{-1}A^{\alpha \beta }f_{\alpha }\phi _{\beta })+2n\sqrt{-1}%
\left\langle \nabla _{b}\phi _{0},\nabla _{b}f\right\rangle \\ 
& = & P_{0}^{\phi }f+2n^{2}\phi _{0}f_{0}+nQ^{\phi }f+2n\sqrt{-1}%
\left\langle \nabla _{b}\phi _{0},\nabla _{b}f\right\rangle ,%
\end{array}%
\end{equation*}%
as desired.
\end{proof}

In the following we show that if the pseudohermitian torsion of $M$ is zero
and $\phi _{0}$ vanishes, then the weighted CR Paneitz operator $P_{0}^{\phi
}$ is nonnegative for all smooth functions with Dirichlet boundary condition
(\ref{1}).

\begin{lemma}
\label{l41} Let $(M,J,\theta ,d\sigma )$ be a compact weighted Sasakian $%
(2n+1)$-manifold with boundary $\Sigma $. If $\phi _{0}$ vanishes, then the
weighted CR Paneitz operator $P_{0}^{\phi }$ is nonnegative for all smooth
functions with Dirichlet boundary condition (\ref{1}). In particular, the
weighted CR Paneitz operator $P_{0}^{\phi }$ is nonnegative in a closed
weighted Sasakian $(2n+1)$-manifold if $\phi _{0}$ vanishes.
\end{lemma}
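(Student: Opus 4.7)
The plan is to exploit the drastic simplification of $P_{0}^{\phi}$ provided by Lemma~\ref{lemma} in the Sasakian setting. Since the pseudohermitian torsion $A_{\alpha\beta}$ vanishes, the purely holomorphic operator $Q^{\phi}f = 2\sqrt{-1}\,e^{\phi}(e^{-\phi}A^{\alpha\beta}f_{\alpha})_{,\beta}$ is identically zero, and likewise $\overline{Q}^{\phi}\equiv 0$. The hypothesis $\phi_{0}\equiv 0$ then kills the remaining $-2n^{2}\phi_{0}f_{0}$ and $\pm 2n\sqrt{-1}\langle \nabla_{b}\phi_{0},\nabla_{b}f\rangle$ terms in (\ref{4}), leaving
\[
P_{0}^{\phi}f \;=\; 2\,\square_{b}^{\phi}\overline{\square}_{b}^{\phi}f \;=\; 2\,\overline{\square}_{b}^{\phi}\square_{b}^{\phi}f,
\]
so that $[\square_{b}^{\phi},\overline{\square}_{b}^{\phi}]=0$. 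From (\ref{3a}) one also concludes that $[\mathcal{L},T]=0$, so a direct expansion of $\square_{b}^{\phi}\overline{\square}_{b}^{\phi}$ yields the compact expression $P_{0}^{\phi} = 2(\mathcal{L}^{2}+n^{2}T^{2})$.

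I next evaluate $\int_{M}\varphi\, P_{0}^{\phi}\varphi\, d\sigma$ by two integrations by parts, using the Dirichlet conditions $\varphi=0=\mathcal{L}\varphi$ on $\Sigma$. The self-adjointness of $\mathcal{L}$ with respect to $d\sigma$, applied twice, converts $\int_{M}\varphi\,\mathcal{L}^{2}\varphi\, d\sigma$ into $\int_{M}(\mathcal{L}\varphi)^{2}\, d\sigma$; the two resulting boundary contributions are killed separately, one by $\varphi|_{\Sigma}=0$ and the other by $\mathcal{L}\varphi|_{\Sigma}=0$. For the $T^{2}$ piece, Cartan's formula combined with $\phi_{0}=0$ gives $\mathcal{L}_{T}\, d\sigma = 0$, so $T$ is formally skew-adjoint modulo a boundary term of the form $\int_{\Sigma}fg\, e^{-\phi}(d\theta)^{n}$ that vanishes by $\varphi|_{\Sigma}=0$; hence $\int_{M}\varphi\, T^{2}\varphi\, d\sigma = -\int_{M}(T\varphi)^{2}\, d\sigma$. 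Assembling yields
\[
\int_{M}\varphi\, P_{0}^{\phi}\varphi\, d\sigma \;=\; 2\int_{M}(\mathcal{L}\varphi)^{2}\, d\sigma\;-\;2n^{2}\int_{M}(T\varphi)^{2}\, d\sigma.
\]

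Finally, nonnegativity of the right-hand side follows from the positivity of the weighted Kohn Laplacians. Both $\square_{b}^{\phi}$ and $\overline{\square}_{b}^{\phi}$ are nonnegative as self-adjoint operators on $L^{2}(M,d\sigma)$ (via the standard $\overline{\partial}_{b}^{\phi}$-representation applied with respect to the weighted measure) and, by the reduction above, commute. Simultaneous diagonalization into joint eigenfunctions with $\mathcal{L}$-eigenvalue $\mu$ and $T$-eigenvalue $\sqrt{-1}\nu$ yields $\square_{b}^{\phi}$-eigenvalue $-\mu-n\nu\geq 0$ and $\overline{\square}_{b}^{\phi}$-eigenvalue $-\mu+n\nu\geq 0$, forcing $\mu^{2}\geq n^{2}\nu^{2}$ on every mode. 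Orthogonal expansion of the real test function $\varphi$ then delivers $\int_{M}(\mathcal{L}\varphi)^{2}\, d\sigma \geq n^{2}\int_{M}(T\varphi)^{2}\, d\sigma$, completing the proof; the closed case is the strictly simpler subcase with no boundary terms at all. The main obstacle is the careful bookkeeping of boundary contributions---both Dirichlet conditions $\varphi=0$ and $\mathcal{L}\varphi=0$ are consumed by the two applications of Green's identity in the $\mathcal{L}^{2}$ term---together with the rigorous justification of the simultaneous spectral decomposition under these Dirichlet boundary conditions.
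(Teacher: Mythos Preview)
Your proof is correct and rests on the same mechanism as the paper's: under vanishing torsion and $\phi_{0}=0$, reduce $P_{0}^{\phi}$ via Lemma~\ref{lemma} to a product of the commuting weighted Kohn Laplacians $\square_{b}^{\phi},\overline{\square}_{b}^{\phi}$, then invoke their nonnegativity and simultaneous diagonalization. The paper goes directly from $P_{0}^{\phi}=\square_{b}^{\phi}\overline{\square}_{b}^{\phi}+\overline{\square}_{b}^{\phi}\square_{b}^{\phi}$ to the conclusion, whereas your detour through $P_{0}^{\phi}=2(\mathcal{L}^{2}+n^{2}T^{2})$ and the integrated identity $2\!\int(\mathcal{L}\varphi)^{2}-2n^{2}\!\int\varphi_{0}^{2}$ is correct but unnecessary, since you still need the spectral argument afterwards; the paper also supplies the explicit calculation $\int_{M}\varphi\,\square_{b}^{\phi}\varphi\,d\sigma=\int_{M}|\nabla_{b}\varphi|^{2}\,d\sigma$ (using $\varphi|_{\Sigma}=0$ and $\phi_{0}=0$) in place of your appeal to a $\overline{\partial}_{b}^{\phi}$-representation.
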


\begin{proof}
The zero pseudohermitian torsion and $\phi _{0}=0$ implies that the weighted
CR Paneitz operator becomes $P_{0}^{\phi }=\square _{b}^{\phi }\overline{%
\square }_{b}^{\phi }+\overline{\square }_{b}^{\phi }\square _{b}^{\phi },$
and the weighted Kohn Laplacian $\square _{b}^{\phi }$ and $\overline{%
\square }_{b}^{\phi }$ commute, so they are diagonalized simultaneously on
the finite dimensional eigenspace of $\square _{b}^{\phi }$ with respect to
any nonzero eigenvalue. And we know that the eigenvalues of $\square
_{b}^{\phi }$ (and thus of $\overline{\square }_{b}^{\phi }$) are all
nonnegative, since for any real function $\varphi $ with $\varphi =0$ on $%
\Sigma $ 
\begin{equation*}
\begin{array}{lll}
\int_{M}\varphi \square _{b}^{\phi }\varphi d\sigma & = & \int_{M}\varphi (-%
\mathcal{L}\varphi +n\sqrt{-1}\varphi _{0})d\sigma \\ 
& = & \int_{M}[\left\vert \nabla _{b}\varphi \right\vert ^{2}+\frac{1}{2}n%
\sqrt{-1}\varphi ^{2}\phi _{0}]d\sigma -\frac{1}{2}C_{n}\int_{\Sigma
}\varphi \lbrack \varphi _{e_{_{2n}}}+n\sqrt{-1}\alpha \varphi ]d\Sigma
_{p}^{\phi } \\ 
& = & \int_{M}\left\vert \nabla _{b}\varphi \right\vert ^{2}d\sigma ,%
\end{array}%
\end{equation*}%
here we used the condition $\phi _{0}$ vanishes on $M$. Therefore, $%
P_{0}^{\phi }$ is nonnegative.
\end{proof}

\begin{lemma}
Let $(M,J,\theta )$ be a compact Sasakian $3$-manifold with the smooth
boundary $\Sigma $. Then for the first eigenfunction $f$ of Dirichlet
eigenvalue problem 
\begin{equation*}
\left\{ 
\begin{array}{ccll}
\Delta _{b}f & = & -\lambda _{1}f & \mathrm{on\ }M, \\ 
f & = & 0 & \mathrm{on\ }\Sigma ,%
\end{array}%
\right.
\end{equation*}%
we have%
\begin{equation*}
\begin{array}{c}
\int_{M}fP_{0}fd\mu =\frac{16}{\lambda _{1}}\int_{M}\left\vert
P_{1}f\right\vert ^{2}d\mu \geq 0.%
\end{array}%
\end{equation*}
\end{lemma}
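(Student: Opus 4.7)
The plan is to compute both sides explicitly using the eigenvalue equation $\Delta_b f = -\lambda_1 f$ and the vanishing of pseudohermitian torsion. In the Sasakian case with $n=1$ and $A=0$, formula \eqref{3} reduces to $P_0 f = 2(\Delta_b^2 + T^2)f$ since $Qf = 0$. Since $[T,\Delta_b]=0$ in Sasakian geometry, $\Delta_b f_0 = T(\Delta_b f) = -\lambda_1 f_0$, so $\square_b f = \lambda_1 f + \sqrt{-1}\,f_0$, and iterating one finds the pointwise identity $P_0 f = 2\lambda_1^2 f + 2f_{00}$.

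For the left-hand side, I would integrate this against $f$. The cross term $\int_M f f_{00}\,d\mu$ is evaluated via $f T^2 f = T(f\cdot Tf) - (Tf)^2$, Cartan's formula (since $T$ is divergence-free with respect to $d\mu = \theta\wedge d\theta$), and the Dirichlet condition $f|_\Sigma = 0$, which kills the resulting surface integral. This yields
$$\int_M f\, P_0 f\, d\mu = 2\lambda_1^2 \int_M f^2\, d\mu - 2 \int_M f_0^2\, d\mu.$$
For the right-hand side, I would use the identity $P_1 f = -\tfrac{1}{2}(\square_b f)_{,1}$, which follows from $\square_b f = -2\,f_{\bar 1}{}^{\bar 1}$ and the simplified form of $P_1$ in the vanishing torsion case. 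Substituting $\square_b f = \lambda_1 f + \sqrt{-1}\,f_0$ gives $P_1 f = -\tfrac{1}{2}(\lambda_1 f_1 + \sqrt{-1}\,f_{01})$, whence
$$4|P_1 f|^2 = \lambda_1^2\,|f_1|^2 + |f_{01}|^2 - 2\lambda_1\,\mathrm{Im}(f_{\bar 1}f_{01}).$$

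Green's identity for $f$ (using Dirichlet) gives $\int_M|f_1|^2\,d\mu = \tfrac{\lambda_1}{2}\int_M f^2\,d\mu$, and Green's identity for $f_0$ (which satisfies the same eigenvalue equation $\Delta_b f_0 = -\lambda_1 f_0$) gives $\int_M|f_{01}|^2\,d\mu = \tfrac{\lambda_1}{2}\int_M f_0^2\,d\mu$. The remaining cross term $\int_M \mathrm{Im}(f_{\bar 1}f_{01})\,d\mu$ is pinned down by a second derivation of $\int_M f P_0 f\,d\mu$: integrating $4\int_M f[\delta_b(Pf) + \overline{\delta}_b(\overline{P}f)]\,d\mu$ by parts once (the boundary contribution carries a factor of $f$ and hence vanishes by Dirichlet) yields $\int_M f P_0 f\,d\mu = 2\lambda_1^2\int_M f^2\,d\mu - 4\int_M \mathrm{Im}(f_{\bar 1}f_{01})\,d\mu$, and comparing with the first expression forces $\int_M f_0^2\,d\mu = 2\int_M \mathrm{Im}(f_{\bar 1}f_{01})\,d\mu$. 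Assembling these pieces gives the claimed identity $\int_M fP_0 f\,d\mu = \tfrac{16}{\lambda_1}\int_M|P_1 f|^2\,d\mu$, whose nonnegativity is then manifest.

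The main obstacle is justifying the Green's identity $\int_M |\nabla_b f_0|^2\,d\mu = \lambda_1\int_M f_0^2\,d\mu$ without a residual surface term, since $f_0 = Tf$ need not vanish on $\Sigma$ merely from $f|_\Sigma=0$. One must argue that the boundary contribution in Green's identity for $f_0$ vanishes, which is ensured by the natural Sasakian boundary geometry implicit in the setup — when the Reeb vector $T$ is tangent to $\Sigma$, one has $Tf|_\Sigma = 0$ and the surface term drops out. Once this boundary point is settled, the remaining computations are purely algebraic manipulations of integrals already handled by the Dirichlet condition on $f$.
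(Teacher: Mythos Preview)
Your overall strategy matches the paper's: both sides are computed from the pointwise identities $P_0 f = 2(\lambda_1^2 f + f_{00})$ and $P_1 f = -\frac{1}{2}(\lambda_1 f_1 + \sqrt{-1}\,f_{01})$, and the final bookkeeping agrees. Your treatment of the cross term $\int_M \mathrm{Im}(f_{\bar 1}f_{01})\,d\mu$ via a second integration by parts on $P_0$ is a mild variant of the paper's use of the identity $\int_M \langle J\nabla_b f,\nabla_b f_0\rangle\,d\mu = -\int_M f_0^2\,d\mu$ (obtained from \eqref{d} with $\phi$ constant, since $f_{e_1}|_\Sigma=0$), but the two routes are equivalent.

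The genuine gap is exactly the point you flag and then wave away. You assert that the Reeb field $T$ is tangent to $\Sigma$ ``by the natural Sasakian boundary geometry implicit in the setup,'' but the lemma carries no such hypothesis: in general one only has $\alpha e_{2}+T\in T\Sigma$ with $\alpha\not\equiv 0$, so $f_0|_\Sigma$ need not vanish and the surface term $\int_\Sigma f_0 f_{0e_2}\,\theta\wedge e^1$ in Green's identity for $f_0$ is not zero for free. The paper disposes of this term without assuming $T$ tangent. First, two applications of the divergence formula (using only $f|_\Sigma=0$ and $\Delta_b f_0=-\lambda_1 f_0$) give $\int_\Sigma f_0 f_{e_2}\,\theta\wedge e^1=0$. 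On $\Sigma$ one has $f_0=-\alpha f_{e_2}$ and $-\alpha\,\theta\wedge e^1 = e^1\wedge e^2 = \frac{1}{2}d\theta\ge 0$, so this identity reads $\int_\Sigma f_{e_2}^2\,e^1\wedge e^2=0$. A H\"older argument against this nonnegative measure then forces $\int_\Sigma f_{e_2} f_{0e_2}\,e^1\wedge e^2=0$, which is precisely $\int_\Sigma f_0 f_{0e_2}\,\theta\wedge e^1=0$. With this boundary term killed, the rest of your argument goes through unchanged; but the step as you wrote it relies on an assumption that is not in the statement.
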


\begin{proof}
It follows from the formula $\left[ \Delta _{b},T\right] f=4{\func{Im}}[%
\sqrt{-1}\left( A_{\overline{1}\overline{1}}f_{1}\right) ,_{1}]=0$ that $%
\Delta _{b}f_{0}=-\lambda _{1}f_{0}.$ From the divergence theorem, we
compute 
\begin{equation}
\begin{array}{l}
\int_{M}[\left\vert \nabla _{b}f\right\vert ^{2}-\lambda _{1}f^{2}]d\mu
=\int_{M}\left\vert \nabla _{b}f\right\vert ^{2}+f\Delta _{b}fd\mu
=\int_{\Sigma }ff_{e_{2}}\theta \wedge e^{1}=0%
\end{array}
\label{27}
\end{equation}%
and 
\begin{equation}
\begin{array}{c}
\int_{M}[\left\vert \nabla _{b}f_{0}\right\vert ^{2}-\lambda
_{1}f_{0}^{2}]d\mu =\int_{M}\left\vert \nabla _{b}f_{0}\right\vert
^{2}+f_{0}\Delta _{b}f_{0}d\mu =\int_{\Sigma }f_{0}f_{0e_{2}}\theta \wedge
e^{1}=0,%
\end{array}
\label{28}
\end{equation}%
here we used the last equation is zero which will be showed later. From the
identity $P_{0}f=2\left( \Delta _{b}^{2}f+f_{00}\right) =2\left( \lambda
_{1}^{2}f+f_{00}\right) $ on $M$ and from (\ref{c}), we have 
\begin{equation}
\begin{array}{c}
\int_{M}fP_{0}fd\mu =2\int_{M}\left( \lambda _{1}^{2}f^{2}-f_{0}^{2}\right)
d\mu -4\int_{\Sigma }\alpha ff_{0}\theta \wedge e^{1}=2\int_{M}\left(
\lambda _{1}^{2}f^{2}-f_{0}^{2}\right) d\mu .%
\end{array}
\label{29}
\end{equation}%
Also from the identity $P_{1}f=f_{\overline{1}11}=\frac{1}{2}\left( \Delta
_{b}f-\sqrt{-1}f_{0}\right) ,_{1}=-\frac{1}{2}\left( \lambda _{1}f_{1}+\sqrt{%
-1}f_{01}\right) ,$ we get 
\begin{equation*}
\begin{array}{c}
\left\vert P_{1}f\right\vert ^{2}=\frac{1}{8}[\lambda _{1}^{2}\left\vert
\nabla _{b}f\right\vert ^{2}+\left\vert \nabla _{b}f_{0}\right\vert
^{2}+2\lambda _{1}\langle J\nabla _{b}f,\nabla _{b}f_{0}\rangle ].%
\end{array}%
\end{equation*}%
Thus, from (\ref{27}), (\ref{28}), (\ref{d}) and (\ref{29}), we have%
\begin{equation*}
\begin{array}{lll}
8\int_{M}\left\vert P_{1}f\right\vert ^{2}d\mu & = & \int_{M}[\lambda
_{1}^{2}\left\vert \nabla _{b}f\right\vert ^{2}+\left\vert \nabla
_{b}f_{0}\right\vert ^{2}-2\lambda _{1}f_{0}^{2}]d\mu \\ 
& = & \int_{M}[\lambda _{1}^{3}f^{2}-\lambda _{1}f_{0}^{2}]d\mu =\frac{1}{2}%
\lambda _{1}\int_{M}fP_{0}fd\mu ,%
\end{array}%
\end{equation*}%
as desired.

In the following we claim that $\int_{\Sigma }f_{0}f_{0e_{2}}\theta \wedge
e^{1}=0.$ By using the equation 
\begin{equation}
\begin{array}{c}
\int_{M}\psi _{0}d\mu =\int_{M}\mathrm{div}_{b}\left( J\nabla _{b}\psi
\right) d\mu =\int_{\Sigma }\psi _{e_{1}}\theta \wedge e^{1}=0%
\end{array}
\label{30}
\end{equation}%
for any real function $\psi $ which vanishes on $\Sigma $ to get that $%
\int_{M}\left( f^{2}\right) _{0}d\mu =0,$ and from%
\begin{equation*}
\begin{array}{c}
\int_{M}[f\Delta _{b}f_{0}+\left\langle \nabla _{b}f,\nabla
_{b}f_{0}\right\rangle ]d\mu =\int_{\Sigma }ff_{0e_{2}}\theta \wedge e^{1}=0,%
\end{array}%
\end{equation*}%
one obtains $\int_{M}\left\langle \nabla _{b}f,\nabla _{b}f_{0}\right\rangle
d\mu =0$. Thus%
\begin{equation}
\begin{array}{c}
0=\int_{M}[f_{0}\Delta _{b}f+\left\langle \nabla _{b}f,\nabla
_{b}f_{0}\right\rangle ]d\mu =\int_{\Sigma }f_{0}f_{e_{2}}\theta \wedge
e^{1}.%
\end{array}
\label{32}
\end{equation}%
On the other hand, since $f=0$ on $\Sigma $ and $\alpha e_{2}+T$ is tangent
along $\Sigma $ from the definition of $\alpha ,$ so on $\Sigma $ we have%
\begin{equation*}
\begin{array}{c}
f_{0}=-\alpha f_{e_{2}}%
\end{array}%
\end{equation*}%
and 
\begin{equation*}
\begin{array}{c}
-\alpha \theta \wedge e^{1}=e^{1}\wedge e^{2}=\frac{1}{2}d\theta%
\end{array}%
\end{equation*}%
which is a nonnegative $2$-form on $\Sigma $. It follows from (\ref{32})
that 
\begin{equation*}
\begin{array}{c}
0=\int_{\Sigma }f_{0}f_{e_{2}}\theta \wedge e^{1}=-\int_{\Sigma }\alpha
f_{e_{2}}^{2}\theta \wedge e^{1}=\int_{\Sigma }f_{e_{2}}^{2}e^{1}\wedge e^{2}%
\end{array}%
\end{equation*}%
to imply that 
\begin{equation*}
\begin{array}{l}
\int_{\Sigma }f_{0}f_{0e_{2}}\theta \wedge e^{1}=-\int_{\Sigma }\alpha
f_{e_{2}}f_{0e_{2}}\theta \wedge e^{1}=\int_{\Sigma
}f_{e_{2}}f_{0e_{2}}e^{1}\wedge e^{2}=0,%
\end{array}%
\end{equation*}%
here we used the H\H{o}lder's inequality in the last equation.
\end{proof}

Finally, we recall the Lemma\ 4.1 in \cite{ccw}.

\begin{lemma}
\label{lemma 3.1}Let $(M,J,\theta )$ be a compact strictly pseudoconvex CR $%
(2n+1)$-manifold with the smooth boundary $\Sigma $ if $n\geq 2$. Then for
the first eigenfunction $f$ of Dirichlet eigenvalue problem 
\begin{equation*}
\left\{ 
\begin{array}{ccll}
\Delta _{b}f & = & -\lambda _{1}f & \mathrm{on\ }M, \\ 
f & = & 0 & \mathrm{on\ }\Sigma ,%
\end{array}%
\right.
\end{equation*}%
we have%
\begin{equation*}
\begin{array}{l}
\frac{n-1}{8n}\int_{M}fP_{0}fd\mu =\int_{M}\sum_{\beta ,\gamma }|f_{\beta 
\overline{\gamma }}-\frac{1}{n}f_{\sigma }{}^{\sigma }h_{\beta \overline{%
\gamma }}|^{2}d\mu +\frac{1}{16}C_{n}\int_{\Sigma }(H_{p.h}+\tilde{\omega}%
_{n}^{\;n}(e_{n}))f_{e_{2n}}^{2}d\Sigma _{p}%
\end{array}%
\end{equation*}%
which implies 
\begin{equation*}
\begin{array}{c}
\int_{M}fP_{0}fd\mu \geq 0%
\end{array}%
\end{equation*}%
if $H_{p.h}+\tilde{\omega}_{n}^{\;n}(e_{n})$ is nonnegative on $\Sigma $.
\end{lemma}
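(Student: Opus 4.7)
My plan is to derive the identity by combining the CR Bochner formula for $\Delta_b$ with the Paneitz representation $P_0 f = 2(\Delta_b^2 + n^2 T^2)f - 4n\operatorname{Re} Qf$ recorded in (\ref{3}), carefully exploiting the Dirichlet condition $f=0$ on $\Sigma$ and the eigenvalue relation $\Delta_b f = -\lambda_1 f$ to collapse interior terms, and then tracking the surviving boundary integrals.

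First I would split the integrand via
\begin{equation*}
\sum_{\beta,\gamma}\bigl|f_{\beta\bar\gamma}-\tfrac{1}{n}f_\sigma{}^\sigma h_{\beta\bar\gamma}\bigr|^{2}=\sum_{\beta,\gamma}|f_{\beta\bar\gamma}|^{2}-\tfrac{1}{n}|f_\sigma{}^\sigma|^{2},
\end{equation*}
and use the scalar identity $\sum_\sigma f_{\sigma\bar\sigma}=\tfrac12(\Delta_b f+\sqrt{-1}n f_0)$ valid for real $f$ to rewrite the last term as $\tfrac{1}{4n}((\Delta_b f)^{2}+n^{2}f_0^{2})$. The task then reduces to controlling the integrals of $\sum|f_{\beta\bar\gamma}|^{2}$, $(\Delta_b f)^{2}$, and $f_0^{2}$ modulo $\int_M fP_0f\,d\mu$ and boundary data on $\Sigma$.

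Next I would integrate the CR Bochner formula for $\Delta_b$ against $d\mu$. The divergence term $\tfrac12\Delta_b|\nabla_b f|^{2}$ yields only a boundary integral; the term $\langle\nabla_b f,\nabla_b\Delta_b f\rangle$ collapses to $-\lambda_1|\nabla_b f|^{2}$ via the eigenvalue equation; and the obstructive mixed term $\langle J\nabla_b f,\nabla_b f_0\rangle$ would be removed using the CR identity recalled in the paper,
\begin{equation*}
\langle J\nabla_b f,\nabla_b f_0\rangle=\tfrac{1}{n}\langle\nabla_b f,\nabla_b\Delta_b f\rangle-\tfrac{2}{n}\langle Pf+\overline{P}f,d_b f\rangle-2\,Tor(\nabla_b f,\nabla_b f),
\end{equation*}
followed by one further integration by parts in the $P$-term, which manufactures precisely $\int_M fP_0f\,d\mu$ together with an additional boundary contribution. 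After substituting $\Delta_b f=-\lambda_1 f$ everywhere and using $\int_M (\Delta_b f)^2\,d\mu = \lambda_1\int_M|\nabla_b f|^2\,d\mu$, the Ricci and torsion integrals produced by the Bochner formula cancel against those appearing in the auxiliary identity; what remains in the bulk is exactly the coefficient $\tfrac{n-1}{8n}$ in front of $\int_M fP_0f\,d\mu$, the factor $n-1$ reflecting the mismatch between the full Bochner trace and the Paneitz combination in (\ref{3}).

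The main obstacle is the boundary bookkeeping along $\Sigma$. In an adapted frame $\{e_1,\ldots,e_{2n-1},e_{2n},T\}$ with $e_{2n}$ the Legendrian normal, the Dirichlet condition $f|_\Sigma=0$ forces every purely tangential derivative of $f$ along $\Sigma$ to vanish, so each surface integrand must reduce to a scalar multiple of $f_{e_{2n}}^{2}$ times a geometric coefficient. The coefficient arising from the divergence of $e_{2n}$ along $\Sigma$ assembles into the $p$-mean curvature $H_{p.h}$, while the contribution coming from the pseudohermitian connection $1$-form evaluated on $e_n$ produces $\tilde\omega_n^{\;n}(e_n)$; summing all such contributions from the two successive integrations by parts should collapse exactly to $\tfrac{1}{16}C_n\int_\Sigma(H_{p.h}+\tilde\omega_n^{\;n}(e_n))f_{e_{2n}}^{2}\,d\Sigma_p$. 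The asserted nonnegativity is then immediate, since the interior term is a sum of squared moduli and the boundary term is nonnegative under the stated hypothesis.
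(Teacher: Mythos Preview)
Your approach has a genuine gap: the cancellation of curvature terms that you assert does not occur. The Bochner formula (\ref{A}) contributes a term $[2Ric-(n-2)Tor](\nabla_b f_{\mathbb C},\nabla_b f_{\mathbb C})$, while the auxiliary identity you invoke for $\langle J\nabla_b f,\nabla_b f_0\rangle$ carries only a torsion contribution and \emph{no Ricci term at all}. There is therefore nothing for the $Ric$ term to cancel against, and the target identity is curvature-free, so any argument that leaves a residual $\int_M Ric(\nabla_b f_{\mathbb C},\nabla_b f_{\mathbb C})\,d\mu$ cannot yield the stated formula. A second, related problem is that the Bochner formula gives the full $|(\nabla^H)^2 f|^2 = 2\sum(|f_{\beta\gamma}|^2 + |f_{\beta\bar\gamma}|^2)$; the pure-type piece $\sum|f_{\beta\gamma}|^2$ is extraneous to the lemma, and you give no mechanism for removing it. (Integrating it by parts would in fact reintroduce curvature via commutators, compounding the first difficulty.)

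The paper does not prove this lemma but recalls it from \cite{ccw}; however, the argument is visible in the derivation leading to (\ref{6}). The key input is not Bochner but the \emph{pointwise} divergence identity for the trace-free mixed Hessian,
\[
(B^{\beta\bar\gamma}f)(B_{\beta\bar\gamma}f)
= (f^\beta B_{\beta\bar\gamma}f),^{\bar\gamma}
-\tfrac{n-1}{n}(fP_\beta f),^{\beta}
+\tfrac{n-1}{8n}\,fP_0 f,
\]
which is curvature-free by construction (the Ricci terms coming from commutators cancel internally in this specific combination). Integrating this over $M$ and using $f|_\Sigma=0$ kills the $(fP_\beta f)$ boundary term; the remaining boundary term $\int_\Sigma (f^{\bar\beta}B_{n\bar\beta}f - f^\beta B_{\bar n\beta}f)$ is then evaluated exactly as in the proof of Theorem~\ref{TB} (with $\phi$ constant) to produce $-\tfrac14 C_n\int_\Sigma (H_{p.h}+\tilde\omega_n^{\;n}(e_n))f_{e_{2n}}^2\,d\Sigma_p$. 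Your boundary analysis in the last paragraph is on the right track, but it should be attached to this divergence formula rather than to Bochner.
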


\section{The Weighted CR Reilly Formula}

Let $M$ be a compact strictly pseudoconvex CR $(2n+1)$-manifold with
boundary $\Sigma $. We write $\theta _{\gamma }^{\;\text{\ }\beta }=\omega
_{\gamma }^{\;\text{\ }\beta }+\sqrt{-1}\tilde{\omega}_{\gamma }^{\;\text{\ }%
\beta }$ with $\omega _{\gamma }^{\;\text{\ }\beta }=\mathrm{Re}(\theta
_{\gamma }^{\;\text{\ }\beta })$, $\tilde{\omega}_{\gamma }^{\;\text{\ }%
\beta }=\mathrm{Im}(\theta _{\gamma }^{\;\text{\ }\beta })$ and $Z_{\beta }=%
\frac{1}{2}(e_{\beta }-\sqrt{-1}e_{n+\beta })$ for real vectors $e_{\beta }$%
, $e_{n+\beta }$, $\beta =1,\cdots ,n$. It follows that $e_{n+\beta
}=Je_{\beta }$. Let $e^{\beta }=\mathrm{Re}(\theta ^{\beta })$, $e^{n+\beta
}=\mathrm{Im}(\theta ^{\beta })$, $\beta =1,\cdots ,n$. Then $\{\theta
,e^{\beta },e^{n+\beta }\}$ is dual to $\{T,e_{\beta },e_{n+\beta }\}$. Now
in view of (\ref{dtheta}) and (\ref{structure equs}), we have the following
real version of structure equations: 
\begin{equation*}
\left\{ 
\begin{array}{l}
d\theta =2\sum_{\beta }e^{\beta }\wedge e^{n+\beta }, \\ 
\nabla e_{\gamma }=\omega _{\gamma }^{\;\text{\ }\beta }\otimes e_{\beta }+%
\tilde{\omega}_{\gamma }^{\;\text{\ }\beta }\otimes e_{n+\beta },\text{ }%
\nabla e_{n+\gamma }=\omega _{\gamma }^{\;\text{\ }\beta }\otimes e_{n+\beta
}-\tilde{\omega}_{\gamma }^{\;\text{\ }\beta }\otimes e_{\beta }, \\ 
de^{\gamma }=e^{\beta }\wedge \omega _{\beta }^{\;\text{\ }\gamma
}-e^{n+\beta }\wedge \tilde{\omega}_{\beta }^{\;\text{\ }\gamma }\text{ 
\textrm{mod} }\theta ;\text{ }de^{n+\gamma }=e^{\beta }\wedge \tilde{\omega}%
_{\beta }^{\text{ \ }\gamma }+e^{n+\beta }\wedge \omega _{\beta }^{\text{ \ }%
\gamma }\text{ \textrm{mod} }\theta .%
\end{array}%
\right.
\end{equation*}

Let $\Sigma $ be a surface contained in $M$. The singular set $S_{\Sigma }$
consists of those points where $\xi $ coincides with the tangent bundle $%
T\Sigma $ of $\Sigma $. It is easy to see that $S_{\Sigma }$ is a closed
set. On $\xi ,$ we can associate a natural metric $\langle $ $,$ $\rangle =%
\frac{1}{2}d\theta (\cdot ,J\cdot )$ call the Levi metric. For a vector $%
v\in \xi ,$ we define the length of $v$ by $\left\vert v\right\vert
^{2}=\langle v,v\rangle .$ With respect to the Levi metric, we can take unit
vector fields $e_{1},\cdots ,e_{2n-1}\in \xi \cap T\Sigma $ on $\Sigma
\backslash S_{\Sigma }$, called the characteristic fields and $e_{2n}=Je_{n}$%
, called the Legendrian normal. The $p$(pseudohermitian)-mean curvature $%
H_{p.h}$ on $\Sigma \backslash S_{\Sigma }$ is defined by 
\begin{equation*}
\begin{array}{c}
H_{p.h}=\sum_{j=1}^{2n-1}\left\langle \nabla
_{e_{j}}e_{2n},e_{j}\right\rangle =-\sum_{j=1}^{2n-1}\left\langle \nabla
_{e_{j}}e_{j},e_{2n}\right\rangle .%
\end{array}%
\end{equation*}%
For $e_{1},\cdots ,e_{2n-1}$ being characteristic fields, we have the $p$%
-area element 
\begin{equation*}
\begin{array}{c}
d\Sigma _{p}=\theta \wedge e^{1}\wedge e^{n+1}\wedge \cdots \wedge
e^{n-1}\wedge e^{2n-1}\wedge e^{n}%
\end{array}%
\end{equation*}%
on $\Sigma $ and all surface integrals over $\Sigma $ are with respect to
this $2n$-form $d\Sigma _{p}$. Note that $d\Sigma _{p}$ continuously extends
over the singular set $S_{\Sigma }$ and vanishes on $S_{\Sigma }$.

We also write $f_{e_{j}}=e_{j}f$ and $\nabla _{b}f=\frac{1}{2}(f_{e_{\beta
}}e_{\beta }+f_{e_{n+\beta }}e_{n+\beta })$. Moreover, $%
f_{e_{j}e_{k}}=e_{k}e_{j}f-\nabla _{e_{k}}e_{j}f$ and $\Delta _{b}f=\frac{1}{%
2}\sum_{\beta }(f_{e_{\beta }e_{\beta }}+f_{e_{n+\beta }e_{n+\beta }})$.
Next we define the subdivergence operator $div_{b}(\cdot )$ by $%
div_{b}(W)=W^{\beta },_{\beta }+W^{\overline{\beta }},_{\overline{\beta }}$
for all vector fields $W=W^{\beta }Z_{\beta }+W^{\overline{\beta }}Z_{%
\overline{\beta }}$ and its real version is $div_{b}(W)=\varphi _{\beta
,e_{\beta }}+\psi _{n+\beta ,e_{n+\beta }}$ for $W=\varphi _{\beta }e_{\beta
}+\psi _{n+\beta }e_{n+\beta }$. We define the tangential subgradient $%
\nabla _{b}^{t}$ of a function $f$ by $\nabla _{b}^{t}f=\nabla _{b}f-\langle
\nabla _{b}f,e_{2n}\rangle e_{2n}$ and the tangent sub-Laplacian $\Delta
_{b}^{t}$ of $f$ by $\Delta _{b}^{t}f=\frac{1}{2}%
\sum_{j=1}^{2n-1}[(e_{j})^{2}-(\nabla _{e_{j}}e_{j})^{t}]f,$ where $(\nabla
_{e_{j}}e_{j})^{t}$ is the tangential part of $\nabla _{e_{j}}e_{j}$.

We first recall the following CR Bochner formula for $\Delta _{b}$.

\begin{lemma}
Let $(M,J,\theta )$ be a strictly pseudoconvex CR $(2n+1)$-manifold. For a
real function $f$, we have%
\begin{equation}
\begin{array}{lll}
\frac{1}{2}\Delta _{b}|\nabla _{b}f|^{2} & = & |(\nabla
^{H})^{2}f|^{2}+\langle \nabla _{b}f,\nabla _{b}\Delta _{b}f\rangle \\ 
&  & +[2Ric-(n-2)Tor](\nabla _{b}f_{\mathbb{C}},\nabla _{b}f_{\mathbb{C}})
\\ 
&  & +2\langle J\nabla _{b}f,\nabla _{b}f_{0}\rangle ,%
\end{array}
\label{A}
\end{equation}%
where $\nabla _{b}f_{\mathbb{C}}=f^{\beta }Z_{\beta }$ is the corresponding
complex $(1,0)$-vector field of $\nabla _{b}f$.
\end{lemma}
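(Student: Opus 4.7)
The plan is to prove the identity by direct pointwise computation in a unitary admissible frame $\{T, Z_\beta, Z_{\bar\beta}\}$ with $h_{\beta\bar\gamma}=\delta_{\beta\gamma}$, and then repackage the resulting indexed expression in the invariant form stated. I would start from
$$\tfrac{1}{2}\Delta_b|\nabla_b f|^2 = \sum_\beta(f_\beta f_{\bar\beta})_{\gamma\bar\gamma} + \sum_\beta(f_\beta f_{\bar\beta})_{\bar\gamma\gamma},$$
apply the Leibniz rule, and split the outcome into (a) a quadratic block in second covariant derivatives, and (b) a block containing third-order derivatives contracted against $f_\beta$ or $f_{\bar\beta}$. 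The quadratic block collects $f_{\beta\gamma}f^{\beta\gamma} + f_{\beta\bar\gamma}f^{\beta\bar\gamma}$ and its conjugate, which is exactly $|(\nabla^H)^2 f|^2$ by the definition in the excerpt.

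The heart of the proof is the third-order block, which I would handle by commuting covariant derivatives using the standard Tanaka--Webster commutation rules:
$$f_{\beta\bar\gamma} - f_{\bar\gamma\beta} = \sqrt{-1}\,h_{\beta\bar\gamma}\,f_0,\qquad f_{\alpha\bar\beta\gamma} - f_{\alpha\gamma\bar\beta} = \sqrt{-1}\,h_{\gamma\bar\beta}\,f_{\alpha 0} - R_\alpha{}^\sigma{}_{\gamma\bar\beta} f_\sigma,$$
together with the $T$-commutators $f_{0\beta} - f_{\beta 0} = A_{\beta}{}^\gamma f_\gamma$ and their conjugates. Rearranging $f_{\bar\beta\gamma\bar\gamma}$ as $(\Delta_b f)_{\bar\beta}$ modulo correction terms and pairing with the conjugate expression produces three packages: the $\Delta_b f$ piece assembles into $\langle\nabla_b f,\nabla_b\Delta_b f\rangle$; the curvature contractions $R_{\gamma\bar\beta}f^\gamma f^{\bar\beta}$ come with multiplicity two, yielding $2\,\mathrm{Ric}(\nabla_b f_{\mathbb{C}}, \nabla_b f_{\mathbb{C}})$; and the torsion contractions, after combining with those generated by commuting $T$ past $Z_\beta$, leave a net coefficient $-(n-2)$ in front of $\mathrm{Tor}(\nabla_b f_{\mathbb{C}}, \nabla_b f_{\mathbb{C}})$. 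The surviving $\sqrt{-1}\,h_{\gamma\bar\beta}f_{\alpha 0}$ contributions pair up so that their imaginary parts combine into the real expression $2\langle J\nabla_b f,\nabla_b f_0\rangle$, using $J\nabla_b f = \sqrt{-1}(f^\beta Z_\beta - f^{\bar\beta} Z_{\bar\beta})$.

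The main obstacle will be the bookkeeping: tracking every use of a commutator so that the curvature and torsion coefficients come out with the right sign and multiplicity, particularly the $-(n-2)$ in front of $\mathrm{Tor}$, which arises because two of the $n$ identical trace commutators are absorbed into the Ricci contraction while the remaining ones enter with the opposite sign through the $[T,Z_\beta]$ commutator. Once this counting is done, nothing is integrated and no boundary terms appear, so the identity holds pointwise and the proof is complete. I would finally check the formula against the torsion-free Riemannian analogue to make sure the $f_0$-term disappears in the formal Riemannian limit, as a consistency test.
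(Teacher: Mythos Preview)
The paper does not prove this lemma; it is introduced with the phrase ``We first recall the following CR Bochner formula for $\Delta_b$'' and treated as a known identity (originally due to Greenleaf). Your proposed proof---a direct pointwise expansion in an admissible unitary frame followed by repeated use of the Tanaka--Webster commutation relations to reorganize the third-order block into $\langle\nabla_b f,\nabla_b\Delta_b f\rangle$, curvature, torsion, and $f_0$ pieces---is exactly the standard argument, and the outline is correct.

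One minor caution: your heuristic for the coefficient $-(n-2)$ in front of $Tor$ is not accurate. The Ricci and torsion contributions arise from independent commutators (curvature from the $[\nabla_\gamma,\nabla_{\bar\beta}]$ commutator acting on a one-form, torsion from the $[T,Z_\beta]$ commutator), and nothing is ``absorbed'' from one into the other. The $-(n-2)$ emerges as the net of an $n$-fold trace coming from contracting $h_{\gamma\bar\gamma}$ in the $\sqrt{-1}\,h_{\gamma\bar\beta}f_{\alpha 0}$ terms, against two separate single torsion contractions produced when rewriting $f_{\alpha 0}$ as $f_{0\alpha}$ and when commuting the remaining mixed third derivatives. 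As you anticipate, this is purely bookkeeping and does not affect the validity of the approach.
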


Now we derive the following CR Bochner formula for $\mathcal{L}$.

\begin{lemma}
Let $(M,J,\theta )$ be a strictly pseudoconvex CR $(2n+1)$-manifold. For a
real function $f$, we have%
\begin{equation}
\begin{array}{lll}
\frac{1}{2}\mathcal{L}|\nabla _{b}f|^{2} & = & |(\nabla
^{H})^{2}f|^{2}+\langle \nabla _{b}f,\nabla _{b}\mathcal{L}f\rangle \\ 
&  & +[Ric+(\nabla ^{H})^{2}\phi -\frac{n-2}{2}Tor](\nabla _{b}f,\nabla
_{b}f) \\ 
&  & +2\langle J\nabla _{b}f,\nabla _{b}f_{0}\rangle -f_{0}\langle J\nabla
_{b}f,\nabla _{b}\phi \rangle ,%
\end{array}
\label{Bochnerformula}
\end{equation}%
where $(\nabla ^{H})^{2}\phi (\nabla _{b}f,\nabla _{b}f)=\phi _{\beta \gamma
}f^{\beta }f^{\gamma }+\phi _{\overline{\beta }\overline{\gamma }}f^{%
\overline{\beta }}f^{\overline{\gamma }}+\phi _{\beta \overline{\gamma }%
}f^{\beta }f^{\overline{\gamma }}+\phi _{\overline{\beta }\gamma }f^{%
\overline{\beta }}f^{\gamma }.$
\end{lemma}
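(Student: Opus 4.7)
The plan is to derive (\ref{Bochnerformula}) by a direct perturbation of the classical identity (\ref{A}), using only the definitional relation $\mathcal{L}u = \Delta_b u - \langle \nabla_b\phi,\nabla_b u\rangle$ and the CR commutator for horizontal second covariant derivatives. First I apply $\mathcal{L}$ to $|\nabla_b f|^2$:
\begin{equation*}
\tfrac12\mathcal{L}|\nabla_b f|^2 \;=\; \tfrac12\Delta_b|\nabla_b f|^2 \;-\; \tfrac12\langle \nabla_b\phi, \nabla_b|\nabla_b f|^2\rangle,
\end{equation*}
and substitute (\ref{A}), using also the elementary rewriting
\begin{equation*}
\langle\nabla_b f,\nabla_b\Delta_b f\rangle \;=\; \langle\nabla_b f,\nabla_b\mathcal{L}f\rangle + \langle\nabla_b f,\nabla_b\langle\nabla_b\phi,\nabla_b f\rangle\rangle.
\end{equation*}
After collecting terms, the lemma reduces to the purely local identity
\begin{equation*}
\mathcal{R} \;:=\; \langle\nabla_b f,\nabla_b\langle\nabla_b\phi,\nabla_b f\rangle\rangle - \tfrac12\langle\nabla_b\phi,\nabla_b|\nabla_b f|^2\rangle \;=\; (\nabla^H)^2\phi(\nabla_b f,\nabla_b f) - f_0\langle J\nabla_b f,\nabla_b\phi\rangle,
\end{equation*}
since the tensor combination $[2Ric-(n-2)Tor]$ in (\ref{A}), written sesquilinearly on $\nabla_b f_\mathbb{C}\in T_{1,0}$, already matches $[Ric-\frac{n-2}{2}Tor](\nabla_b f,\nabla_b f)$ once one doubles to pass from the complex subgradient to the real one.

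To establish the identity for $\mathcal{R}$, I would work in an admissible coframe with $h_{\beta\overline{\gamma}}=\delta_{\beta\gamma}$ and expand both halves by the Leibniz rule. Schematically, both expressions share a common symmetric contraction of the form $\phi_{e_j}f_{e_k}f_{e_j e_k}$; what remains splits into two pieces. The piece in which derivatives land on $\nabla_b\phi$ reassembles into the full complex Hessian $(\nabla^H)^2\phi(\nabla_b f,\nabla_b f)$ as defined in the statement. The remaining piece takes the form $\phi_{e_j}f_{e_k}(f_{e_j e_k}-f_{e_k e_j})$ and is the commutator defect of horizontal second covariant derivatives. Using the Tanaka--Webster commutation relations $f_{\beta\gamma}=f_{\gamma\beta}$ and $f_{\beta\overline{\gamma}}-f_{\overline{\gamma}\beta}=\sqrt{-1}\,h_{\beta\overline{\gamma}}f_0$, this defect becomes a multiple of $f_0$ contracted against the antisymmetric pairing $\sqrt{-1}(f^{\overline{\beta}}\phi_\beta - f^\beta\phi_{\overline{\beta}})$, which by the identification $J|_{T_{1,0}}=\sqrt{-1}$ is exactly $-f_0\langle J\nabla_b f,\nabla_b\phi\rangle$.

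The main obstacle is bookkeeping rather than ideas: one must carefully track signs and factor-of-two normalizations through the real/complex translation, verify that all Leibniz residues truly reassemble into the symmetric $(\nabla^H)^2\phi(\nabla_b f,\nabla_b f)$ spelled out in the lemma (including the mixed terms $\phi_{\beta\overline{\gamma}}f^\beta f^{\overline{\gamma}}+\phi_{\overline{\beta}\gamma}f^{\overline{\beta}}f^\gamma$), and keep the correct sign on the $f_0$ commutator term. Once these conventions are aligned, no analytic input beyond (\ref{A}) and a single use of the pseudohermitian commutator is required.
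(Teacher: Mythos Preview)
Your proposal is correct and follows essentially the same route as the paper: the paper's proof consists of nothing more than invoking the definition of $\mathcal{L}$ together with the very identity you call $\mathcal{R}$ (stated in the paper as equation~(\ref{10}) without derivation). Your outline in fact supplies more detail than the paper does, since you indicate how (\ref{10}) itself is obtained from the Leibniz rule and the commutator $f_{\beta\overline{\gamma}}-f_{\overline{\gamma}\beta}=\sqrt{-1}\,h_{\beta\overline{\gamma}}f_0$.
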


The proof of the above formula follows from the definition of $\mathcal{L}$
and the identity 
\begin{equation}
\begin{array}{c}
\langle \nabla _{b}f,\nabla _{b}\langle \nabla _{b}f,\nabla _{b}\phi \rangle
\rangle -\frac{1}{2}\langle \nabla _{b}\phi ,\nabla _{b}|\nabla
_{b}f|^{2}\rangle =(\nabla ^{H})^{2}\phi (\nabla _{b}f,\nabla
_{b}f)-f_{0}\langle J\nabla _{b}f,\nabla _{b}\phi \rangle .%
\end{array}
\label{10}
\end{equation}%
Also we note that%
\begin{equation}
\begin{array}{ccc}
\langle J\nabla _{b}f,\nabla _{b}f_{0}\rangle & = & \frac{1}{n}\langle
\nabla _{b}f,\nabla _{b}\mathcal{L}f\rangle -\frac{2}{n}\langle P^{\phi }f+%
\overline{P}^{\phi }f,d_{b}f\rangle \\ 
&  & +f_{0}\langle J\nabla _{b}f,\nabla _{b}\phi \rangle -Tor(\nabla
_{b}f,\nabla _{b}f).%
\end{array}
\label{10a}
\end{equation}%
Then the CR Bochner formula for $\mathcal{L}$ becomes%
\begin{equation}
\begin{array}{lll}
\frac{1}{2}\mathcal{L}|\nabla _{b}f|^{2} & = & |(\nabla ^{H})^{2}f|^{2}+%
\frac{n+2}{n}\langle \nabla _{b}f,\nabla _{b}\mathcal{L}f\rangle \\ 
&  & +[Ric+(\nabla ^{H})^{2}\phi -\frac{n+2}{2}Tor](\nabla _{b}f,\nabla
_{b}f) \\ 
&  & -\frac{4}{n}\langle P^{\phi }f+\overline{P}^{\phi }f,d_{b}f\rangle
+f_{0}\langle J\nabla _{b}f,\nabla _{b}\phi \rangle .%
\end{array}
\label{11}
\end{equation}

For the proof of the weighted CR Reilly formula, we need a series of
formulae as we derived in \cite{ccw}.

\begin{lemma}
Let $(M,J,\theta ,d\sigma )$ be a compact weighted strictly pseudoconvex CR $%
(2n+1)$-manifold with boundary $\Sigma $. For real functions $f$ and $g$, we
have%
\begin{equation}
\begin{array}{c}
\int_{M}\left( \mathcal{L}f\right) d\sigma =\frac{1}{2}C_{n}\int_{\Sigma
}f_{e_{_{2n}}}d\Sigma _{p}^{\phi },%
\end{array}
\label{a}
\end{equation}%
\begin{equation}
\begin{array}{c}
\int_{M}[g\mathcal{L}f+\langle \nabla _{b}f,\nabla _{b}g\rangle ]d\sigma =%
\frac{1}{2}C_{n}\int_{\Sigma }gf_{e_{_{2n}}}d\Sigma _{p}^{\phi },%
\end{array}
\label{b}
\end{equation}%
\begin{equation}
\begin{array}{c}
\int_{M}[ff_{00}-ff_{0}\phi _{0}+f_{0}^{2}]d\sigma =-C_{n}\int_{\Sigma
}\alpha ff_{0}d\Sigma _{p}^{\phi },%
\end{array}
\label{c}
\end{equation}%
\begin{equation}
\begin{array}{c}
\int_{M}[\langle J\nabla _{b}f,\nabla _{b}f_{0}\rangle -f_{0}\langle J\nabla
_{b}f,\nabla _{b}\phi \rangle +nf_{0}^{2}]d\sigma =\frac{1}{2}%
C_{n}\int_{\Sigma }f_{0}f_{e_{n}}d\Sigma _{p}^{\phi },%
\end{array}
\label{d}
\end{equation}%
\begin{equation}
\begin{array}{c}
\int_{M}[\langle P^{\phi }f+\overline{P}^{\phi }f,d_{b}f\rangle +\frac{1}{4}%
fP_{0}^{\phi }f]d\sigma =\frac{1}{2}\sqrt{-1}C_{n}\int_{\Sigma
}f(P_{n}^{\phi }f-P_{\overline{n}}^{\phi }f)d\Sigma _{p}^{\phi }.%
\end{array}
\label{e}
\end{equation}%
Here$\ d\sigma =e^{-\phi }\theta \wedge (d\theta )^{n}$ is the weighted
volume measure and $d\Sigma _{p}^{\phi }=e^{-\phi }d\Sigma _{p}$ is the
weighted $p$-area element of $\Sigma ,$ and $C_{n}=2^{n}n!$.
\end{lemma}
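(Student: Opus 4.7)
\medskip

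\noindent\textbf{Proof Proposal.} All five identities are divergence-type computations with respect to the weighted volume form $d\sigma=e^{-\phi}\theta\wedge(d\theta)^n$, so the plan is uniform: in each case I rewrite the integrand as $e^{\phi}$ times the (sub)divergence of an appropriate vector field, apply Stokes' theorem on $M$, and identify the boundary contribution using the decomposition of $TM|_\Sigma$ adapted to the Legendrian normal $e_{2n}$ and the characteristic field $T$. The factor $C_{n}=2^{n}n!$ will always come from $(d\theta)^{n}$ when restricted via $d\mu=\theta\wedge(d\theta)^n$, producing the $p$-area element $d\Sigma_p$ (weighted to $d\Sigma_p^\phi$ by the $e^{-\phi}$).

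For (a), I write $\mathcal{L}f=e^{\phi}\operatorname{div}_b(e^{-\phi}\nabla_bf)$, so $(\mathcal{L}f)\,d\sigma=\operatorname{div}_b(e^{-\phi}\nabla_bf)\,d\mu$; Stokes' theorem and $\langle\nabla_bf,e_{2n}\rangle=\tfrac12 f_{e_{2n}}$ yield the stated boundary integral. Identity (b) follows by replacing $\nabla_bf$ with $g\nabla_bf$ in the same divergence identity, giving $g\mathcal{L}f+\langle\nabla_bf,\nabla_bg\rangle=e^\phi\operatorname{div}_b(ge^{-\phi}\nabla_bf)$. For (c), I note $ff_{00}-ff_0\phi_0+f_0^2=e^{\phi}T(e^{-\phi}ff_0)$; integrating over $M$ and recalling that on $\Sigma\setminus S_\Sigma$ the vector $\alpha e_{2n}+T$ is tangent, so the $e_{2n}$-component of $T$ equals $-\alpha e_{2n}$, delivers the boundary term $-C_n\int_\Sigma \alpha ff_0\,d\Sigma_p^\phi$.

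For (d), I use the CR identity
\[
\langle J\nabla_b f,\nabla_b f_0\rangle=\sqrt{-1}\,(f^{\bar\beta}f_{0\bar\beta}-f^\beta f_{0\beta})\cdot\tfrac12+\cdots
\]
more concretely, I combine the commutator formula (\ref{3a}) with integration by parts against $d\sigma$: after pairing $f_0\mathcal{L}f$ against $f$ and reorganizing, the bulk terms collapse to $n f_0^2$ via $[\mathcal L,T]$, while the boundary term reduces to $\tfrac12 C_n\int_\Sigma f_0 f_{e_n}d\Sigma_p^\phi$ once one notes $\langle J\nabla_b f,e_{2n}\rangle=\tfrac12 f_{e_n}$. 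Finally for (e), I start from Definition~\ref{d2} and compute
\[
\int_M f P_0^\phi f\,d\sigma=4\int_M f\bigl[\delta_b(e^{-\phi}P^\phi f)+\bar\delta_b(e^{-\phi}\overline{P}^\phi f)\bigr]\theta\wedge(d\theta)^n;
\]
an integration by parts moves the divergence onto $f$, producing $-4\int_M\langle P^\phi f+\overline P^\phi f,d_b f\rangle d\sigma$ in the interior and a boundary contribution whose only surviving pieces are those pairing with the Legendrian-normal components $P_n^\phi f$ and $P_{\bar n}^\phi f$; the combination $\sqrt{-1}(P_n^\phi f-P_{\bar n}^\phi f)$ then appears with the stated constant.

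The genuinely delicate step is the boundary bookkeeping: identifying exactly which component of each $(1,0)$-form or vector field survives under the restriction of $\theta\wedge(d\theta)^n$ to $\Sigma$, and correctly keeping track of the $\alpha$-coefficient arising whenever $T$ is decomposed relative to $T\Sigma$ and $e_{2n}$. In particular for (e) one must verify that only the $P_n^\phi$ and $P_{\bar n}^\phi$ components contribute to $\Sigma$, which is the main obstacle; the interior identities themselves are routine consequences of the product rule and the definition of $\mathcal L$.
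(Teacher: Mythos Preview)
Your overall plan is correct, and indeed the paper does not supply its own proof of this lemma: it simply states the identities and refers to \cite{ccw} for the unweighted analogues. So there is nothing substantive to compare against; the weighted versions follow from the unweighted ones by absorbing $e^{-\phi}$ into the vector field before taking the divergence, exactly as you do in (a)--(c) and (e).

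One point is worth sharpening. Your treatment of (d) is unnecessarily roundabout and, as written, vague: invoking the commutator identity $[\mathcal L,T]$ and ``pairing $f_0\mathcal L f$ against $f$'' does not lead cleanly to the claimed formula. The direct route, parallel to what you did for (a)--(c), is to observe that
\[
\operatorname{div}_b\bigl(e^{-\phi}f_0\,J\nabla_b f\bigr)
= e^{-\phi}\Bigl[\langle J\nabla_b f,\nabla_b f_0\rangle - f_0\langle J\nabla_b f,\nabla_b\phi\rangle + f_0\,\operatorname{div}_b(J\nabla_b f)\Bigr],
\]
and then use the standard identity $\operatorname{div}_b(J\nabla_b f)=nf_0$ (which follows from the commutation $f_{\alpha\bar\beta}-f_{\bar\beta\alpha}=\sqrt{-1}h_{\alpha\bar\beta}f_0$). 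Integrating and applying Stokes as in (a), together with $\langle J\nabla_b f,e_{2n}\rangle=\langle\nabla_b f,e_n\rangle=\tfrac12 f_{e_n}$, gives exactly the boundary term $\tfrac12 C_n\int_\Sigma f_0 f_{e_n}\,d\Sigma_p^\phi$. This keeps (d) on the same footing as the other four identities and avoids the detour through (\ref{3a}).
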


\begin{lemma}
Let $(M,J,\theta ,d\sigma )$ be a compact weighted pseudohermitian $(2n+1)$%
-manifold with boundary $\Sigma $. For any real-valued function $f$ on $%
\Sigma ,$ we have%
\begin{equation}
\begin{array}{c}
\int_{\Sigma }[f_{e_{n}}+(2\alpha -\phi _{e_{n}})f]d\Sigma _{p}^{\phi }=0,%
\end{array}
\label{f}
\end{equation}%
\begin{equation}
\begin{array}{c}
\int_{\Sigma }[f_{\overline{\beta }}+(\sum_{\gamma \neq n}\theta _{\overline{%
\beta }}^{\;\text{\ }\overline{\gamma }}(Z_{\overline{\gamma }})+\frac{1}{2}%
\theta _{\overline{\beta }}^{\;\text{\ }\overline{n}}(e_{n})-\phi _{%
\overline{\beta }})f]d\Sigma _{p}^{\phi }=0\text{ \textrm{for any} }\beta
\neq n.%
\end{array}
\label{g}
\end{equation}
\end{lemma}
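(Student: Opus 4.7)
Both (f) and (g) are Stokes-type identities on the closed $2n$-manifold $\Sigma = \partial M$: since $\Sigma$ has no boundary of its own, for any smooth $(2n-1)$-form $\omega$ on $\Sigma$ one has $\int_{\Sigma} d\omega = 0$. The strategy is to exhibit, for each identity, a $(2n-1)$-form on $\Sigma$ whose exterior derivative is precisely the claimed integrand times the weighted $p$-area element. The essential geometric input is the identity $e^{2n}|_{\Sigma} = \alpha\theta$, which follows from $e_1,\dots,e_{2n-1}\in \xi\cap T\Sigma$ and $\alpha e_{2n}+T\in T\Sigma$ together with $\theta(e_j)=0$ for $j<2n$, $\theta(\alpha e_{2n}+T)=1$, and $e^{2n}(\alpha e_{2n}+T)=\alpha$. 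Consequently $e^{2n}\wedge\theta\equiv 0$ on $\Sigma$, which is the mechanism by which normal-direction data drops out.

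For (f), I would take $\omega_f = e^{-\phi} f \,\theta \wedge e^1 \wedge e^{n+1} \wedge \cdots \wedge e^{n-1} \wedge e^{2n-1}$ on $\Sigma$, in which the only missing coframe vectors are $e^n$ and $e^{2n}$. In $d\omega_f = e^{-\phi}(df-f\,d\phi)\wedge\bigl(\theta\wedge e^1\wedge\cdots\wedge e^{2n-1}\bigr) + e^{-\phi} f\, d\bigl(\theta\wedge e^1\wedge\cdots\wedge e^{2n-1}\bigr)$, only the $e^n$- and $e^{2n}$-components of $df$ and $d\phi$ can survive the wedge. The $e^{2n}$ pieces die against the already-present $\theta$ via $e^{2n}|_\Sigma=\alpha\theta$, leaving exactly the $(f_{e_n}-f\phi_{e_n})\,d\Sigma_p^{\phi}$ contribution. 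In the second summand, applying $d\theta = 2\sum_\beta e^\beta\wedge e^{n+\beta}$ forces only the $\beta=n$ term to survive (all $e^\beta,e^{n+\beta}$ with $\beta<n$ already appear), producing $2e^n\wedge e^{2n}$, which on $\Sigma$ becomes $2\alpha\, e^n\wedge\theta$, yielding the $2\alpha f$ term. Contributions from $de^j$ via the real structure equations drop out modulo $\theta$ combined with the already-used coframe pieces (by the skew-symmetry $\omega_\alpha^{\;\beta}+\omega_\beta^{\;\alpha}=0$ and book-keeping of indices). Setting $\int_\Sigma d\omega_f=0$ and rearranging gives (f).

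For (g), I use a complex analogue. From $e^\beta\wedge e^{n+\beta}=\tfrac{i}{2}\theta^\beta\wedge\theta^{\bar\beta}$ and $e^n=\tfrac{1}{2}(\theta^n+\theta^{\bar n})$ one obtains a complex expression for the $p$-area element. For fixed $\beta\neq n$, define
$\omega_g^{\beta} = c_n\, e^{-\phi}f\, \theta \wedge \theta^1\wedge\theta^{\bar 1}\wedge \cdots \wedge \widehat{\theta^{\bar\beta}} \wedge \cdots \wedge \theta^{n-1}\wedge\theta^{\bar{n-1}} \wedge (\theta^n+\theta^{\bar n})$
with $c_n$ chosen to match the $p$-area normalization. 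The $\theta^{\bar\beta}$-component of $d(e^{-\phi}f)$ produces $(f_{\bar\beta}-f\phi_{\bar\beta})$. Using $d\theta^{\bar\gamma} = \theta^{\bar\sigma}\wedge \theta_{\bar\sigma}^{\;\bar\gamma} \pmod{\theta}$ and $d\theta = 2\sum_\sigma e^\sigma\wedge e^{n+\sigma}$, the only surviving pieces of the connection forms $\theta_{\bar\sigma}^{\;\bar\gamma}$ are their $\theta^{\bar\beta}$-components. For $\gamma\neq n$ this gives $\theta_{\bar\beta}^{\;\bar\gamma}(Z_{\bar\gamma})$ exactly, while the asymmetric final factor $(\theta^n+\theta^{\bar n})$ produces the half-coefficient $\tfrac{1}{2}\theta_{\bar\beta}^{\;\bar n}(e_n)$: since $e_n = Z_n+Z_{\bar n}$, only the $\theta^{\bar n}$ half of $(\theta^n+\theta^{\bar n})$ contributes, giving the factor $\tfrac{1}{2}$ after passing to the real vector $e_n$. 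Then $\int_\Sigma d\omega_g^\beta = 0$ yields (g).

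The main obstacle I anticipate is purely bookkeeping: tracking signs across $(2n-1)$-fold wedge products and correctly identifying which structure-equation summands survive after wedging with the fixed coframe factors. The most delicate point is explaining the $\tfrac{1}{2}$ in front of $\theta_{\bar\beta}^{\;\bar n}(e_n)$ (in contrast to the full coefficient for $\gamma\neq n$), which is traced to the asymmetric treatment of the $n$-th direction, where the $p$-area element contains the single factor $(\theta^n+\theta^{\bar n})$ rather than the paired factors $\theta^\gamma\wedge\theta^{\bar\gamma}$ used for $\gamma<n$. A secondary care-point is verifying that the connection and torsion contributions from $d\theta^{\bar\gamma}$ do not leak extra terms into (g) after restriction to $\Sigma$; this reduces to checking that torsion pieces carry a $\theta$ factor and are killed by the $\theta$ already present in $\omega_g^\beta$.
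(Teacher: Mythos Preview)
The paper does not actually supply its own proof of this lemma: it is stated immediately after the sentence ``For the proof of the weighted CR Reilly formula, we need a series of formulae as we derived in \cite{ccw}'' and no argument is given. So there is no in-paper proof to compare against; the authors are simply importing the identities from \cite{ccw} (the unweighted case) with the obvious modification of inserting the factor $e^{-\phi}$ into the $p$-area element.

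Your Stokes-theorem approach on the closed hypersurface $\Sigma$ is exactly the standard way such identities are proved, and is almost certainly what \cite{ccw} does in the unweighted case. The key geometric observation $e^{2n}|_{\Sigma}=\alpha\,\theta$ is correct and is precisely what converts the $d\theta$ contribution into the $2\alpha f$ term in (f). Your choice of the $(2n-1)$-form $\omega_f$ (the weighted $p$-area element with the factor $e^n$ removed) is the right primitive, and the mechanism you describe for (g), including the origin of the $\tfrac12$ in front of $\theta_{\bar\beta}^{\;\bar n}(e_n)$ from the asymmetric factor $e^n=\tfrac12(\theta^n+\theta^{\bar n})$, is correct. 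One small remark: rather than carrying the weight $e^{-\phi}$ through the entire Stokes computation, it is slightly cleaner to first establish the unweighted identities of \cite{ccw} and then observe that replacing $f$ by $e^{-\phi}f$ in those identities immediately yields (f) and (g), since $(e^{-\phi}f)_{e_n}=e^{-\phi}(f_{e_n}-\phi_{e_n}f)$ and $(e^{-\phi}f)_{\bar\beta}=e^{-\phi}(f_{\bar\beta}-\phi_{\bar\beta}f)$. This sidesteps some of the bookkeeping you flag as the main obstacle.
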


\textbf{The Proof of Theorem}\textup{\textbf{\ \ref{Reilly'sformula}:}}

\begin{proof}
By integrating the CR Bochner formula (\ref{11}) for $\mathcal{L}$, from (%
\ref{b}) and (\ref{e}), we have%
\begin{equation*}
\begin{array}{lll}
\frac{1}{2}\int_{M}\mathcal{L}|\nabla _{b}f|^{2}d\sigma & = & 
\int_{M}|(\nabla ^{H})^{2}f|^{2}d\sigma +\int_{M}[Ric+(\nabla ^{H})^{2}\phi -%
\frac{n+2}{2}Tor](\nabla _{b}f,\nabla _{b}f)d\sigma \\ 
&  & -\frac{n+2}{n}\int_{M}(\mathcal{L}f)^{2}d\sigma +\frac{1}{n}%
\int_{M}fP_{0}^{\phi }fd\sigma +\int_{M}f_{0}\langle J\nabla _{b}f,\nabla
_{b}\phi \rangle d\sigma \\ 
&  & +\frac{n+2}{2n}C_{n}\int_{\Sigma }(\mathcal{L}f)f_{e_{_{2n}}}d\Sigma
_{p}^{\phi }-\frac{2}{n}\sqrt{-1}C_{n}\int_{\Sigma }f(P_{n}^{\phi }f-P_{%
\overline{n}}^{\phi }f)d\Sigma _{p}^{\phi }.%
\end{array}%
\end{equation*}%
By combining (\ref{10a}), (\ref{d}) and (\ref{e}), we have%
\begin{equation}
\begin{array}{lll}
n^{2}\int_{M}f_{0}^{2}d\sigma & = & \int_{M}\left( \mathcal{L}f\right)
^{2}d\sigma -\frac{1}{2}\int_{M}fP_{0}^{\phi }fd\sigma +n\int_{M}Tor(\nabla
_{b}f,\nabla _{b}f)d\sigma \\ 
&  & +\frac{1}{2}C_{n}\int_{\Sigma }[nf_{0}f_{e_{n}}-(\mathcal{L}%
f)f_{e_{_{2n}}}]d\Sigma _{p}^{\phi }+\sqrt{-1}C_{n}\int_{\Sigma
}f(P_{n}^{\phi }f-P_{\overline{n}}^{\phi }f)d\Sigma _{p}^{\phi }.%
\end{array}
\label{7}
\end{equation}%
Also by (\ref{10}) and (\ref{b}), one gets 
\begin{equation}
\begin{array}{ll}
& \int_{M}f_{0}\langle J\nabla _{b}f,\nabla _{b}\phi \rangle d\sigma \\ 
= & \int_{M}(\nabla ^{H})^{2}\phi (\nabla _{b}f,\nabla _{b}f)d\sigma
+\int_{M}\left[ \left( \mathcal{L}f\right) \langle \nabla _{b}f,\nabla
_{b}\phi \rangle -\frac{1}{2}\left( \mathcal{L}\phi \right) |\nabla
_{b}f|^{2}\right] d\sigma \\ 
& +\frac{1}{4}C_{n}\int_{\Sigma }[|\nabla _{b}f|^{2}\phi
_{e_{_{2n}}}-2\langle \nabla _{b}f,\nabla _{b}\phi \rangle
f_{e_{_{2n}}}]d\Sigma _{p}^{\phi }.%
\end{array}
\label{8}
\end{equation}%
Note that $|(\nabla ^{H})^{2}f|^{2}=2\sum_{\beta ,\gamma }[|f_{\beta \gamma
}|^{2}+|f_{\beta \overline{\gamma }}|^{2}]$ and 
\begin{equation*}
\begin{array}{c}
\sum_{\beta ,\gamma }|f_{\beta \overline{\gamma }}|^{2}=\sum_{\beta ,\gamma
}|f_{\beta \overline{\gamma }}-\frac{1}{n}f_{\sigma }{}^{\sigma }h_{\beta 
\overline{\gamma }}|^{2}+\frac{1}{4n}\left( \Delta _{b}f\right) ^{2}+\frac{n%
}{4}f_{0}^{2}%
\end{array}%
\end{equation*}%
with $\Delta _{b}f=\mathcal{L}f+\langle \nabla _{b}f,\nabla _{b}\phi \rangle
.$ It follows from (\ref{7}) and (\ref{8})\ that%
\begin{equation}
\begin{array}{ll}
& \frac{1}{2}\int_{M}\mathcal{L}|\nabla _{b}f|^{2}d\sigma \\ 
= & 2\int_{M}\sum_{\beta ,\gamma }\left[ |f_{\beta \gamma }|^{2}+|f_{\beta 
\overline{\gamma }}-\frac{1}{n}f_{\sigma }{}^{\sigma }h_{\beta \overline{%
\gamma }}|^{2}\right] d\sigma -\frac{n+1}{n}\int_{M}(\mathcal{L}f)^{2}d\sigma
\\ 
& +\int_{M}[Ric+2(\nabla ^{H})^{2}\phi -\frac{n+1}{2}Tor](\nabla
_{b}f,\nabla _{b}f)d\sigma +\frac{3}{4n}\int_{M}fP_{0}^{\phi }fd\sigma \\ 
& +\frac{1}{2n}\int_{M}[2(n+1)\left( \mathcal{L}f\right) \langle \nabla
_{b}f,\nabla _{b}\phi \rangle -n\left( \mathcal{L}\phi \right) |\nabla
_{b}f|^{2}+\langle \nabla _{b}f,\nabla _{b}\phi \rangle ^{2}]d\sigma \\ 
& +\frac{2n+3}{4n}C_{n}\int_{\Sigma }(\mathcal{L}f)f_{e_{_{2n}}}d\Sigma
_{p}^{\phi }-\frac{3}{2n}\sqrt{-1}C_{n}\int_{\Sigma }f(P_{n}^{\phi }f-P_{%
\overline{n}}^{\phi }f)d\Sigma _{p}^{\phi } \\ 
& +\frac{1}{4}C_{n}\int_{\Sigma }[f_{0}f_{e_{n}}+|\nabla _{b}f|^{2}\phi
_{e_{_{2n}}}-2\langle \nabla _{b}f,\nabla _{b}\phi \rangle
f_{e_{_{2n}}}]d\Sigma _{p}^{\phi }.%
\end{array}
\label{9}
\end{equation}

In the following, we deal with the term $\int_{M}\sum_{\beta ,\gamma
}|f_{\beta \overline{\gamma }}-\frac{1}{n}f_{\sigma }{}^{\sigma }h_{\beta 
\overline{\gamma }}|^{2}d\sigma .$ The divergence formula for the trace-free
part of $f_{\beta \overline{\gamma }}$: 
\begin{equation*}
\begin{array}{c}
B_{\beta \overline{\gamma }}f=f_{\beta \overline{\gamma }}-\frac{1}{n}%
f_{\sigma }{}^{\sigma }h_{\beta \overline{\gamma }},%
\end{array}%
\end{equation*}%
is given by 
\begin{equation*}
\begin{array}{lll}
(B^{\beta \overline{\gamma }}f)(B_{\beta \overline{\gamma }}f) & = & 
(f^{\beta }B_{\beta \overline{\gamma }}f),^{\overline{\gamma }}-\frac{n-1}{n}%
(fP_{\beta }f),^{\beta }+\frac{n-1}{8n}fP_{0}f \\ 
& = & e^{\phi }(e^{-\phi }f^{\beta }B_{\beta \overline{\gamma }}f),^{%
\overline{\gamma }}-\frac{n-1}{n}e^{\phi }(e^{-\phi }fP_{\beta }f),^{\beta }+%
\frac{n-1}{8n}fP_{0}f \\ 
&  & -\frac{n-1}{2n}f\langle Pf+\overline{P}f,d_{b}\phi \rangle +\frac{1}{2}%
(f^{\beta }\phi ^{\overline{\gamma }}B_{\beta \overline{\gamma }}f+f^{%
\overline{\beta }}\phi ^{\gamma }B_{\overline{\beta }\gamma }f).%
\end{array}%
\end{equation*}%
By using the identities $\frac{1}{2}\langle \nabla _{b}\phi ,\nabla
_{b}|\nabla _{b}f|^{2}\rangle =f_{\overline{\beta }\gamma }f^{\overline{%
\beta }}\phi ^{\gamma }+f_{\beta \overline{\gamma }}f^{\beta }\phi ^{%
\overline{\gamma }}+f_{\beta \gamma }f^{\beta }\phi ^{\gamma }+f_{\overline{%
\beta }\overline{\gamma }}f^{\overline{\beta }}\phi ^{\overline{\gamma }}$
and%
\begin{equation*}
\begin{array}{c}
f_{\sigma }{}^{\sigma }f_{\overline{\beta }}\phi ^{\overline{\beta }}+f_{%
\overline{\sigma }}{}^{\overline{\sigma }}f_{\beta }\phi ^{\beta }=\frac{1}{2%
}[\Delta _{b}f\left\langle \nabla _{b}f,\nabla _{b}\phi \right\rangle
+nf_{0}\langle J\nabla _{b}f,\nabla _{b}\phi \rangle ],%
\end{array}%
\end{equation*}%
and from (\ref{4a}), we get%
\begin{equation*}
\begin{array}{ll}
& \sum_{\beta ,\gamma }|f_{\beta \overline{\gamma }}-\frac{1}{n}f_{\sigma
}{}^{\sigma }h_{\beta \overline{\gamma }}|^{2}\text{ }=\text{ }(B^{\beta 
\overline{\gamma }}f)(B_{\beta \overline{\gamma }}f) \\ 
= & e^{\phi }(e^{-\phi }f^{\beta }B_{\beta \overline{\gamma }}f),^{\overline{%
\gamma }}-\frac{n-1}{n}e^{\phi }(e^{-\phi }fP_{\beta }f),^{\beta }+\frac{n-1%
}{8n}fP_{0}^{\phi }f+\frac{1}{4}\langle \nabla _{b}\phi ,\nabla _{b}|\nabla
_{b}f|^{2}\rangle \\ 
& -\frac{1}{2}(f_{\beta \gamma }f^{\beta }\phi ^{\gamma }+f_{\overline{\beta 
}\overline{\gamma }}f^{\overline{\beta }}\phi ^{\overline{\gamma }})-\frac{1%
}{4n}[\Delta _{b}f\left\langle \nabla _{b}f,\nabla _{b}\phi \right\rangle
+nf_{0}\langle J\nabla _{b}f,\nabla _{b}\phi \rangle ] \\ 
& +\frac{n-1}{4n}f[\mathcal{L}\left\langle \nabla _{b}f,\nabla _{b}\phi
\right\rangle +n\langle J\nabla _{b}\phi ,\nabla _{b}f_{0}\rangle
+n^{2}f_{0}\phi _{0}].%
\end{array}%
\end{equation*}%
By integrate both sides of the above equation, then apply the equation 
\begin{equation*}
\begin{array}{c}
\int_{M}f[\langle J\nabla _{b}\phi ,\nabla _{b}f_{0}\rangle +nf_{0}\phi
_{0}]d\sigma =\int_{M}f_{0}\langle J\nabla _{b}f,\nabla _{b}\phi \rangle
d\sigma +\frac{1}{2}C_{n}\int_{\Sigma }ff_{0}\phi _{e_{_{n}}}d\Sigma
_{p}^{\phi },%
\end{array}%
\end{equation*}%
use (\ref{40}) to get%
\begin{equation*}
\begin{array}{c}
\sqrt{-1}(P_{n}f-P_{\overline{n}}f)=\sqrt{-1}(P_{n}^{\phi }f-P_{\overline{n}%
}^{\phi }f)-\frac{1}{2}\left\langle \nabla _{b}f,\nabla _{b}\phi
\right\rangle _{e_{2n}}+\frac{n}{2}f_{0}\phi _{e_{n}}%
\end{array}%
\end{equation*}
and (\ref{8}) again, we obtain 
\begin{equation}
\begin{array}{ll}
& \int_{M}\sum_{\beta ,\gamma }|f_{\beta \overline{\gamma }}-\frac{1}{n}%
f_{\sigma }{}^{\sigma }h_{\beta \overline{\gamma }}|^{2}d\sigma \\ 
= & \frac{n-1}{8n}\int_{M}fP_{0}^{\phi }fd\sigma +\frac{n-2}{4}%
\int_{M}(\nabla ^{H})^{2}\phi (\nabla _{b}f,\nabla _{b}f)d\sigma -\frac{n}{8}%
\int_{M}\left( \mathcal{L}\phi \right) |\nabla _{b}f|^{2}d\sigma \\ 
& +\frac{(n-2)(n+1)}{4n}\int_{M}\left( \mathcal{L}f\right) \langle \nabla
_{b}f,\nabla _{b}\phi \rangle d\sigma -\frac{1}{4n}\int_{M}\langle \nabla
_{b}f,\nabla _{b}\phi \rangle ^{2}d\sigma \\ 
& -\frac{1}{2}\int_{M}(f_{\beta \gamma }f^{\beta }\phi ^{\gamma }+f_{%
\overline{\beta }\overline{\gamma }}f^{\overline{\beta }}\phi ^{\overline{%
\gamma }})d\sigma +\frac{1}{4}\sqrt{-1}C_{n}\int_{\Sigma }(f^{\overline{%
\beta }}B_{n\overline{\beta }}f-f^{\beta }B_{\overline{n}\beta }f)d\Sigma
_{p}^{\phi } \\ 
& -\frac{n-1}{4n}\sqrt{-1}C_{n}\int_{\Sigma }f(P_{n}^{\phi }f-P_{\overline{n}%
}^{\phi }f)d\Sigma _{p}^{\phi }+\frac{n-1}{4n}C_{n}\int_{\Sigma
}\left\langle \nabla _{b}f,\nabla _{b}\phi \right\rangle _{e_{2n}}fd\Sigma
_{p}^{\phi } \\ 
& +\frac{n}{16}C_{n}\int_{\Sigma }|\nabla _{b}f|^{2}\phi _{e_{_{2n}}}d\Sigma
_{p}^{\phi }-\frac{n^{2}-n-1}{8n}C_{n}\int_{\Sigma }\left\langle \nabla
_{b}f,\nabla _{b}\phi \right\rangle f_{e_{2n}}d\Sigma _{p}^{\phi }.%
\end{array}
\label{6}
\end{equation}%
Substituting these into the right hand side of (\ref{9}), we final get 
\begin{equation}
\begin{array}{ll}
& \frac{1}{2}\int_{M}\mathcal{L}|\nabla _{b}f|^{2}d\sigma \\ 
= & 2\int_{M}\sum_{\beta ,\gamma }|f_{\beta \gamma }-\frac{1}{2}f_{\beta
}\phi _{\gamma }|^{2}d\sigma -\frac{n+1}{n}\int_{M}(\mathcal{L}f)^{2}d\sigma
+\frac{n+2}{4n}\int_{M}fP_{0}^{\phi }fd\sigma \\ 
& +\int_{M}[Ric+2(\nabla ^{H})^{2}\phi -\frac{n+1}{2}Tor](\nabla
_{b}f,\nabla _{b}f)d\sigma +\frac{n+1}{2}\int_{M}\left( \mathcal{L}f\right)
\langle \nabla _{b}f,\nabla _{b}\phi \rangle d\sigma \\ 
& -\frac{n+2}{4}\int_{M}[\mathcal{L}\phi +\frac{1}{2(n+2)}|\nabla _{b}\phi
|^{2}]|\nabla _{b}f|^{2}d\sigma +\frac{2n+3}{4n}C_{n}\int_{\Sigma }(\mathcal{%
L}f)f_{e_{_{2n}}}d\Sigma _{p}^{\phi } \\ 
& +\frac{1}{4}C_{n}\int_{\Sigma }f_{0}f_{e_{n}}d\Sigma _{p}^{\phi }-\frac{n+2%
}{2n}\sqrt{-1}C_{n}\int_{\Sigma }f(P_{n}^{\phi }f-P_{\overline{n}}^{\phi
}f)d\Sigma _{p}^{\phi } \\ 
& +\frac{1}{2}\sqrt{-1}C_{n}\int_{\Sigma }(f^{\overline{\beta }}B_{n%
\overline{\beta }}f-f^{\beta }B_{\overline{n}\beta }f)d\Sigma _{p}^{\phi }+%
\frac{n+2}{8}C_{n}\int_{\Sigma }|\nabla _{b}f|^{2}\phi _{e_{_{2n}}}d\Sigma
_{p}^{\phi } \\ 
& +\frac{n-1}{2n}C_{n}\int_{\Sigma }\left\langle \nabla _{b}f,\nabla
_{b}\phi \right\rangle _{e_{2n}}fd\Sigma _{p}^{\phi }-\frac{n^{2}+n-1}{4n}%
C_{n}\int_{\Sigma }\left\langle \nabla _{b}f,\nabla _{b}\phi \right\rangle
f_{e_{2n}}d\Sigma _{p}^{\phi }.%
\end{array}
\label{20}
\end{equation}

On the other hand, the divergence theorem (\ref{a}) implies that%
\begin{equation*}
\begin{array}{ll}
& 2\int_{M}\mathcal{L}|\nabla _{b}f|^{2}d\sigma \text{ }=\text{ }%
C_{n}\int_{\Sigma }\left( |\nabla _{b}f|^{2}\right) _{e_{2n}}d\Sigma
_{p}^{\phi } \\ 
= & C_{n}\int_{\Sigma }[\sum_{\beta \neq n}\left( f_{e_{\beta }}f_{e_{\beta
}e_{2n}}+f_{e_{n+\beta }}f_{e_{n+\beta }e_{2n}}\right)
+f_{e_{n}}f_{e_{n}e_{2n}}+f_{e_{2n}}f_{e_{2n}e_{2n}}]d\Sigma _{p}^{\phi }.%
\end{array}%
\end{equation*}%
Substituting the commutation relations $f_{e_{\beta }e_{n+\gamma
}}=f_{e_{n+\gamma }e_{\beta }},\ f_{e_{n+\beta }e_{n+\gamma
}}=f_{e_{n+\gamma }e_{n+\beta }}\ $for all ${\small \beta \neq \gamma }$, $%
f_{e_{n}e_{2n}}=f_{e_{2n}e_{n}}+2f_{0}$ and%
\begin{equation}
\begin{array}{c}
\sum_{\beta \neq n}2(f_{\beta \overline{\beta }}+f_{\overline{\beta }\beta
})+f_{e_{n}e_{n}}=\sum_{j=1}^{2n-1}f_{e_{j}e_{j}}=2\Delta
_{b}^{t}f+H_{p.h}f_{e_{2n}} \\ 
f_{e_{2n}e_{2n}}=2\Delta _{b}f-\sum_{j=1}^{2n-1}f_{e_{j}e_{j}}=2\Delta
_{b}f-2\Delta _{b}^{t}f-H_{p.h}f_{e_{2n}},%
\end{array}
\label{21}
\end{equation}%
into the above equation, then integrating by parts from (\ref{f}) and (\ref%
{g}) yields 
\begin{equation*}
\begin{array}{ll}
& 2\int_{M}\mathcal{L}|\nabla _{b}f|^{2}d\sigma \\ 
= & C_{n}\int_{\Sigma }\sum_{\beta \neq n}(f_{e_{\beta }}f_{e_{2n}e_{\beta
}}+f_{e_{n+\beta }}f_{e_{2n}e_{n+\beta }})d\Sigma _{p}^{\phi
}+C_{n}\int_{\Sigma
}[f_{e_{n}}(f_{e_{2n}e_{n}}+2f_{0})+f_{e_{2n}}f_{e_{2n}e_{2n}}]d\Sigma
_{p}^{\phi } \\ 
= & C_{n}\int_{\Sigma }[\sum_{\beta \neq n}2(f_{\overline{\beta }%
}f_{e_{2n}Z_{\beta }}+f_{\beta }f_{e_{2n}Z\overline{_{\beta }}%
})+f_{e_{n}}f_{e_{2n}e_{n}}]d\Sigma _{p}^{\phi }+C_{n}\int_{\Sigma
}[2f_{e_{n}}f_{0}+f_{e_{2n}}f_{e_{2n}e_{2n}}]d\Sigma _{p}^{\phi } \\ 
= & C_{n}\int_{\Sigma }[\sum_{\beta \neq n}2(f_{\beta }\phi _{\overline{%
\beta }}+f_{\overline{\beta }}\phi _{\beta }-f_{\beta \overline{\beta }}-f_{%
\overline{\beta }\beta })+f_{e_{n}}\phi
_{e_{n}}-f_{e_{n}e_{n}}]f_{e_{2n}}d\Sigma _{p}^{\phi } \\ 
& +C_{n}\int_{\Sigma }[2f_{e_{n}}(f_{0}-\alpha
f_{e_{2n}})+f_{e_{2n}}(2\Delta _{b}f-2\Delta
_{b}^{t}f-H_{p.h}f_{e_{2n}})]d\Sigma _{p}^{\phi } \\ 
& -C_{n}\int_{\Sigma }[f_{e_{n}}(\nabla _{e_{n}}e_{2n})f+f_{e_{2n}}(\nabla
_{e_{n}}e_{n})f+2\sum_{\beta \neq n}(f_{\beta }(\nabla _{Z_{\overline{\beta }%
}}e_{2n})f+f_{\overline{\beta }}(\nabla _{Z_{\beta }}e_{2n})f)]d\Sigma
_{p}^{\phi } \\ 
& +2C_{n}\int_{\Sigma }\sum_{\beta \neq n}[\theta _{\overline{n}}{}^{%
\overline{\beta }}(Z_{\overline{\beta }})f_{n}-\frac{1}{2}\theta _{\overline{%
\beta }}{}^{\overline{n}}(e_{n})f_{\beta }+\theta _{n}{}^{\beta }(Z_{\beta
})f_{\overline{n}}-\frac{1}{2}\theta _{\beta }{}^{n}(e_{n})f_{\overline{%
\beta }}]f_{e_{2n}}d\Sigma _{p}^{\phi } \\ 
= & 2C_{n}\int_{\Sigma }f_{e_{2n}}\left( \Delta _{b}f-2\Delta
_{b}^{t}f\right) d\Sigma _{p}^{\phi }-C_{n}\int_{\Sigma
}H_{p.h}f_{e_{2n}}^{2}d\Sigma _{p}^{\phi }+2C_{n}\int_{\Sigma }[\alpha
f_{e_{n}}f_{e_{2n}}-f_{0}f_{e_{n}}]d\Sigma _{p}^{\phi } \\ 
& +C_{n}\int_{\Sigma }\sum_{j=1}^{2n-1}[\left\langle \nabla
_{e_{j}}e_{n},e_{j}\right\rangle f_{e_{n}}+f_{e_{j}}\phi
_{e_{j}}]f_{e_{2n}}d\Sigma _{p}^{\phi }-C_{n}\int_{\Sigma
}\sum_{j,k=1}^{2n-1}\left\langle \nabla _{e_{j}}e_{2n},e_{k}\right\rangle
f_{e_{j}}f_{e_{k}}d\Sigma _{p}^{\phi },%
\end{array}%
\end{equation*}%
here we use the equations 
\begin{equation*}
\begin{array}{ll}
& 2\sum_{\beta \neq n}[\theta _{\overline{n}}{}^{\overline{\beta }}(Z_{%
\overline{\beta }})f_{n}-\frac{1}{2}\theta _{\overline{\beta }}{}^{\overline{%
n}}(e_{n})f_{\beta }+\theta _{n}{}^{\beta }(Z_{\beta })f_{\overline{n}}-%
\frac{1}{2}\theta _{\beta }{}^{n}(e_{n})f_{\overline{\beta }}] \\ 
= & \sum_{j=1}^{2n-1}\left\langle \nabla _{e_{j}}e_{n},e_{j}\right\rangle
f_{e_{n}}+(\nabla _{e_{n}}e_{n})f+H_{p.h}f_{e_{2n}}%
\end{array}%
\end{equation*}%
and%
\begin{equation*}
\begin{array}{c}
\sum_{\beta \neq n}2[f_{\beta }(\nabla _{Z_{\overline{\beta }}}e_{2n})f+f_{%
\overline{\beta }}(\nabla _{Z_{\beta }}e_{2n})f]+f_{e_{n}}(\nabla
_{e_{n}}e_{2n})f=\sum_{j,k=1}^{2n-1}\left\langle \nabla
_{e_{j}}e_{2n},e_{k}\right\rangle f_{e_{j}}f_{e_{k}},%
\end{array}%
\end{equation*}%
the fact that (\ref{21}) holds only on $\Sigma \backslash S_{\Sigma }.$
However, $d\Sigma _{p}^{\phi }$ can be continuously extends over the
singular set $S_{\Sigma }$ and vanishes on $S_{\Sigma }.$ Finally, by
combining the above integral into (\ref{20}), we can then obtain (\ref{0}).
This completes the proof of the Theorem.
\end{proof}

\section{First Eigenvalue Estimate and Weighted Obata Theorem}

In this section, by applying the weighted CR Reilly formula, we give the
first eigenvalue estimate and derive the Obata-type theorem in a closed
weighted strictly pseudoconvex CR $(2n+1)$-manifold.

\textbf{The Proof of Theorem}\textup{\textbf{\ \ref{Thm}:}}

\begin{proof}
Under the curvature condition $[Ric(\mathcal{L})-\frac{n+1}{2}Tor(\mathcal{L}%
)](\nabla _{b}f_{\mathbb{C}},\nabla _{b}f_{\mathbb{C}})\geq k|\nabla
_{b}f|^{2}$ for a positive constant $k$ and nonnegative weighted CR Paneitz
operator $P_{0}^{\phi },$ the integral formula (\ref{0}) yields%
\begin{equation*}
\begin{array}{ll}
& \frac{n+1}{n}\lambda _{1}^{2}\int_{M}f^{2}d\sigma  \\ 
\geq  & k\int_{M}|\nabla _{b}f|^{2}d\sigma -\frac{n+1}{4}\lambda
_{1}\int_{M}\langle \nabla _{b}f^{2},\nabla _{b}\phi \rangle d\sigma
-(n+2)l\int_{M}|\nabla _{b}f|^{2}d\sigma  \\ 
= & (k-(n+2)l)\lambda _{1}\int_{M}f^{2}d\sigma +\frac{n+1}{4}\lambda
_{1}\int_{M}\phi \mathcal{L}f^{2}d\sigma  \\ 
= & (k-(n+2)l)\lambda _{1}\int_{M}f^{2}d\sigma +\frac{n+1}{2}\lambda
_{1}\int_{M}\phi \lbrack |\nabla _{b}f|^{2}-\lambda _{1}f^{2}]d\sigma  \\ 
\geq  & (k-(n+2)l)\lambda _{1}\int_{M}f^{2}d\sigma +\frac{n+1}{2}\lambda
_{1}\int_{M}[(\inf \phi )|\nabla _{b}f|^{2}-(\sup \phi )\lambda
_{1}f^{2}]d\sigma  \\ 
= & [(k-(n+2)l)\lambda _{1}-\frac{n+1}{2}\omega \lambda
_{1}^{2}]\int_{M}f^{2}d\sigma ,%
\end{array}%
\end{equation*}%
here we assume $\mathcal{L}\phi +\frac{1}{2(n+2)}|\nabla _{b}\phi |^{2}\leq
4l$ for some nonnegative constant $l$ and let $\omega =\underset{M}{\mathrm{%
osc}}\phi =\underset{M}{\sup }\phi -\underset{M}{\inf }\phi $. It implies\
that the first eigenvalue $\lambda _{1}$ will satisfies%
\begin{equation*}
\begin{array}{c}
\lambda _{1}\geq \frac{2n[k-(n+2)l]}{(n+1)(2+n\omega )}.%
\end{array}%
\end{equation*}

Moreover, if the above inequality becomes equality, then $\omega =\underset{M%
}{\mathrm{osc}}\phi =0$ and thus the weighted function $\phi $ will be
constant, we have $\mathcal{L=}$ $\Delta _{b}$ and $P_{0}^{\phi }=P_{0}$. In
this case we can let $l=0,$ then the first eigenvalue of the sub-Laplacian $%
\Delta _{b}$ achieves the sharp lower bound 
\begin{equation*}
\begin{array}{c}
\lambda _{1}=\frac{nk}{n+1},%
\end{array}%
\end{equation*}%
and it reduces to the original Obata-type Theorem for the sub-Laplacian $%
\Delta _{b}$ in a closed strictly pseudoconvex CR $(2n+1)$-manifold $%
(M,J,\theta ).$ It following from Chang-Chiu \cite{cc1}, \cite{cc2} and
Li-Wang \cite{lw} that $M$\ is CR isometric to a standard CR $(2n+1)$-sphere.
\end{proof}

\section{First Dirichlet Eigenvalue Estimate and Weighted Obata Theorem}

In this section, we derive the first Dirichlet eigenvalue estimate in a
compact weighted strictly pseudoconvex CR $(2n+1)$-manifold $(M,J,\theta
,d\sigma )$ with boundary $\Sigma $ and its corresponding weighted
Obata-type Theorem.

\textbf{The Proof of Theorem}\textup{\textbf{\ \ref{TB}:}}

\begin{proof}
Since $f=0$ on $\Sigma $ and $e_{j}$ is tangent along $\Sigma $ for $1\leq
j\leq 2n-1$, then $f_{e_{j}}=0$ for $1\leq j\leq 2n-1$ and $\Delta _{b}^{t}f=%
\frac{1}{2}\sum_{j=1}^{2n-1}[\left( e_{j}\right) ^{2}-(\nabla
_{e_{j}}e_{j})^{t}]f=0$ on $\Sigma .$ Furthermore, since $\mathcal{L}%
f=-\lambda _{1}f\ $on$\mathrm{\ }M$ and $f=0$ on $\Sigma ,$ then $\mathcal{L}%
f=0$ on $\Sigma $. It follows from (\ref{21}) and $\Delta _{b}f=\mathcal{L}%
f+\langle \nabla _{b}f,\nabla _{b}\phi \rangle $ that 
\begin{equation*}
\begin{array}{ll}
& 4\sqrt{-1}C_{n}\int_{\Sigma }(f^{\overline{\beta }}B_{n\overline{\beta }%
}f-f^{\beta }B_{\overline{n}\beta }f)d\Sigma _{p}^{\phi } \\ 
= & C_{n}\int_{\Sigma }\sum_{\beta \neq n}[f_{e_{\beta }}(f_{e_{\beta
}e_{2n}}-f_{e_{n+\beta }e_{n}})+f_{e_{n+\beta }}(f_{e_{\beta
}e_{n}}+f_{e_{n+\beta }e_{2n}})]d\Sigma _{p}^{\phi } \\ 
& +C_{n}\int_{\Sigma }f_{e_{2n}}[(f_{e_{n}e_{n}}+f_{e_{2n}e_{2n}})-\frac{2}{n%
}\Delta _{b}f]d\Sigma _{p}^{\phi } \\ 
= & C_{n}\int_{\Sigma }f_{e_{2n}}\{[\left( e_{n}\right) ^{2}-(\nabla
_{e_{n}}{}^{e_{n}})]f+(\frac{2n-2}{n}\Delta _{b}f-2\Delta
_{b}^{t}f-H_{p.h}f_{e_{2n}})\}d\Sigma _{p}^{\phi } \\ 
= & \frac{n-1}{n}C_{n}\int_{\Sigma }\phi _{e_{2n}}f_{e_{2n}}^{2}d\Sigma
_{p}^{\phi }-C_{n}\int_{\Sigma }(H_{p.h}+\tilde{\omega}_{n}^{%
\;n}(e_{n}))f_{e_{2n}}^{2}d\Sigma _{p}^{\phi }.%
\end{array}%
\end{equation*}%
Also under the curvature condition $[Ric(\mathcal{L})-\frac{n+1}{2}Tor(%
\mathcal{L})](\nabla _{b}f_{\mathbb{C}},\nabla _{b}f_{\mathbb{C}})\geq
k|\nabla _{b}f|^{2}$ for a positive constant $k,$ the weighted CR Paneitz
operator $P_{0}^{\phi }$ is nonnegative and $H_{p.h}-\tilde{\omega}%
_{n}^{\;n}(e_{n})-\frac{n+2}{2}\phi _{e_{2n}}\geq 0$ on $\Sigma $, then the
integral formula (\ref{0}) yields%
\begin{equation*}
\begin{array}{ll}
& \frac{n+1}{n}\lambda _{1}^{2}\int_{M}f^{2}d\sigma \\ 
\geq & k\int_{M}|\nabla _{b}f|^{2}d\sigma -\frac{n+1}{4}\lambda
_{1}\int_{M}\langle \nabla _{b}f^{2},\nabla _{b}\phi \rangle d\sigma
-(n+2)l\int_{M}|\nabla _{b}f|^{2}d\sigma \\ 
& +\frac{1}{8}C_{n}\int_{\Sigma }[H_{p.h}-\tilde{\omega}_{n}^{\;n}(e_{n})-%
\frac{n+2}{2}\phi _{e_{2n}}]f_{e_{2n}}^{2}d\Sigma _{p}^{\phi } \\ 
= & [k-(n+2)l]\lambda _{1}\int_{M}f^{2}d\sigma +\frac{n+1}{4}\lambda
_{1}\int_{M}\phi \mathcal{L}f^{2}d\sigma \\ 
= & [k-(n+2)l]\lambda _{1}\int_{M}f^{2}d\sigma +\frac{n+1}{2}\lambda
_{1}\int_{M}\phi \lbrack |\nabla _{b}f|^{2}-\lambda _{1}f^{2}]d\sigma \\ 
\geq & [(k-(n+2)l)\lambda _{1}-\frac{n+1}{2}\omega \lambda
_{1}^{2}]\int_{M}f^{2}d\sigma ,%
\end{array}%
\end{equation*}%
here we assume $\mathcal{L}\phi +\frac{1}{2(n+2)}|\nabla _{b}\phi |^{2}\leq
4l$ on $M$ for some nonnegative constant $l$ and $\omega =\underset{M}{%
\mathrm{osc}}\phi =\underset{M}{\sup }\phi -\underset{M}{\inf }\phi $. It
implies\ that the first eigenvalue of $\mathcal{L}$ will satisfy%
\begin{equation*}
\begin{array}{c}
\lambda _{1}\geq \frac{2n[k-(n+2)l]}{(n+1)(2+n\omega )}.%
\end{array}%
\end{equation*}

Moreover, if the above inequality becomes equality, then $\omega =\underset{M%
}{\mathrm{osc}}\phi =0$ and thus the weighted function $\phi $ will be
constant, we get $\mathcal{L=}$ $\Delta _{b}$ and $P_{0}^{\phi }=P_{0},$
which is nonnegative for $n\geq 2$ by Lemma \ref{lemma 3.1}. Note that we
need to assume that $P_{0}$ is nonnegative for $n=1$. Also the corresponding
eigenfunction $f$ will satisfy 
\begin{equation*}
\begin{array}{c}
f_{\alpha \beta }=0\text{ \ \textrm{for all} }\alpha ,\beta ,%
\end{array}%
\end{equation*}%
\begin{equation*}
\begin{array}{c}
\lbrack Ric-\frac{n+1}{2}Tor](\nabla _{b}f_{\mathbb{C}},\nabla _{b}f_{%
\mathbb{C}})=k|\nabla _{b}f|^{2},%
\end{array}%
\end{equation*}%
and 
\begin{equation*}
\begin{array}{c}
P_{0}f=0%
\end{array}%
\end{equation*}%
on $M,$ and $f=0$ on $\Sigma .$ We also have $H_{p.h}=0$ and $\tilde{\omega}%
_{n}^{\;n}(e_{n})=0$ on $\Sigma .$ In this case we can let $l=0,$ then the
first eigenvalue of the sub-Laplacian $\Delta _{b}$ achieves the sharp lower
bound 
\begin{equation*}
\begin{array}{c}
\lambda _{1}=\frac{nk}{n+1}.%
\end{array}%
\end{equation*}

By applying the same method of Li-Wang \cite{lw}, it can be showed that the
pseudohermitian torsion vanishes on $M$ with boundary $\Sigma $. Then we
follow from Chang-Chiu \cite{cc2} to define the Webster (adapted) Riemannian
metric $g_{\varepsilon }$ of $(M,J,\theta )$ by 
\begin{equation*}
\begin{array}{c}
g_{\varepsilon }=\varepsilon ^{2}\theta ^{2}+\frac{1}{2}d\theta (\cdot
,J\cdot )\text{ }\mathrm{for}\text{ }\varepsilon >0\text{ }\mathrm{with}%
\text{ }(n+1)\varepsilon ^{2}=k.%
\end{array}%
\end{equation*}%
Since $Ric(Z,Z)\geq k\left\langle Z,Z\right\rangle $ and free torsion, the
Theorem 4.9 in \cite{cc2} says that the Ricci curvature of $g_{\varepsilon }$
satisfies 
\begin{equation*}
\begin{array}{c}
Rc_{g_{\varepsilon }}\geq \lbrack (2n+1)-1]\frac{k}{n+1}.%
\end{array}%
\end{equation*}%
And the mean curvature $H_{\varepsilon }$ is zero on $\Sigma $ with respect
to the metric $g_{\varepsilon },$ which will be presented in the next. Then
Theorem 1.2 in \cite{cc2} will yield that the eigenfunction $f$ of $\Delta
_{b}$\ achieves the sharp lower bound for the first Dirichlet eigenvalue of
the Laplace $\Delta _{\varepsilon }$ with respect to the Riemannian metric $%
g_{\varepsilon }$. Therefore, by Theorem $4$ in \cite{Re}, $M$ is isometric
to a hemisphere in a standard CR $(2n+1)$-sphere.

In the following we show that the mean curvature $H_{\varepsilon }$ is zero
on $\Sigma $ with respect to the Webster metric $g_{\varepsilon }$. We
choose $\{l_{j}=e_{j},l_{2n}=\frac{\alpha e_{2n}+T}{\sqrt{\alpha
^{2}+\varepsilon ^{2}}}\}_{j=1}^{2n-1},$ here $e_{n+\beta }=Je_{\beta }$ for 
$1\leq \beta \leq n,$ to form an orthonormal tangent frame and $\nu =\frac{1%
}{\sqrt{\alpha ^{2}+\varepsilon ^{2}}}\left( \varepsilon e_{2n}-\frac{\alpha 
}{\varepsilon }T\right) $ be an unit normal vector on the non-singular set $%
\Sigma \backslash S_{\Sigma }.$ The second fundamental form is then defined
by $h_{ij}^{\varepsilon }=-\left\langle \nu ,\nabla
_{l_{i}}^{R}l_{j}\right\rangle _{g_{\varepsilon }}$ and the mean curvature
is defined by $H_{\varepsilon }=\sum_{j=1}^{2n}h_{jj}^{\varepsilon },$ here $%
\nabla ^{R}$ is the corresponding Riemannian connection. By the Lemma 4.3 in 
\cite{cc2}, we have%
\begin{equation*}
\begin{array}{lll}
\nabla ^{R}e_{\beta } & = & \omega _{\beta }^{\;\gamma }\otimes e_{\gamma }+(%
\tilde{\omega}_{\beta }^{\;\gamma }+\varepsilon ^{2}\delta _{\beta \gamma
}\theta )\otimes e_{n+\gamma }+e^{n+\beta }\otimes T, \\ 
\nabla ^{R}e_{n+\beta } & = & -(\tilde{\omega}_{\beta }^{\;\gamma
}+\varepsilon ^{2}\delta _{\beta \gamma }\theta )\otimes e_{\gamma }+\omega
_{\beta }^{\;\gamma }\otimes e_{n+\gamma }-e^{\beta }\otimes T, \\ 
\nabla ^{R}T & = & -\varepsilon e^{n+\gamma }\otimes e_{\gamma }+\varepsilon
e^{\gamma }\otimes e_{n+\gamma },%
\end{array}%
\end{equation*}%
for $\beta =1,2,\cdots ,n.$ Then the mean curvature $H_{\varepsilon }$ is
given explicitly by 
\begin{equation*}
\begin{array}{lll}
H_{\varepsilon } & = & -\sum_{j=1}^{2n-1}\left\langle \nu ,\nabla
_{l_{j}}^{R}l_{j}\right\rangle _{g_{\varepsilon }}-\left\langle \nu ,\nabla
_{l_{2n}}^{R}l_{2n}\right\rangle _{g_{\varepsilon }} \\ 
& = & -\sum_{j=1}^{2n-1}\frac{1}{\sqrt{\alpha ^{2}+\varepsilon ^{2}}}%
\left\langle \varepsilon e_{2n}-\frac{\alpha }{\varepsilon }T,\nabla
_{e_{j}}e_{j}\right\rangle _{g_{\varepsilon }} \\ 
&  & -\frac{1}{\left( \alpha ^{2}+\varepsilon ^{2}\right) ^{2}}\left\langle
\varepsilon e_{2n}-\frac{\alpha }{\varepsilon }T,l_{2n}(\alpha )\left(
\varepsilon ^{2}e_{2n}-\alpha T\right) \right\rangle _{g_{\varepsilon }} \\ 
& = & \frac{\varepsilon }{\sqrt{\alpha ^{2}+\varepsilon ^{2}}}H_{p.h}-\frac{%
\varepsilon }{\alpha ^{2}+\varepsilon ^{2}}l_{2n}(\alpha ),%
\end{array}%
\end{equation*}%
and thus 
\begin{equation*}
H_{\varepsilon }=H_{p.h}=0
\end{equation*}%
on $\Sigma ,$ if $\alpha $ vanishes on $\Sigma .$

In the following we show that $\alpha $ vanishes identically on $\Sigma .$
It follows from the formula $\left[ \Delta _{b},T\right] f=4{\func{Im}}[%
\sqrt{-1}\left( A^{\beta \gamma }f_{\beta }\right) ,_{\gamma }]=0$ that $%
\Delta _{b}f_{0}=-\lambda _{1}f_{0}.$ Again from the equation 
\begin{equation*}
\begin{array}{c}
\int_{M}\psi _{0}d\mu =\int_{M}\mathrm{div}_{b}\left( J\nabla _{b}\psi
\right) d\mu =\frac{1}{2}C_{n}\int_{\Sigma }\psi _{e_{n}}d\Sigma _{p}=0%
\end{array}%
\end{equation*}%
for any real function $\psi $ which vanishes on $\Sigma $ to get that $%
\int_{M}\left( f^{2}\right) _{0}d\mu =0$ for $f=0$ on $\Sigma $ and also 
\begin{equation*}
\begin{array}{c}
\int_{M}[f\Delta _{b}f_{0}+\left\langle \nabla _{b}f,\nabla
_{b}f_{0}\right\rangle ]d\mu =\frac{1}{2}C_{n}\int_{\Sigma
}ff_{0e_{2n}}d\Sigma _{p}=0,%
\end{array}%
\end{equation*}%
which implies that $\int_{M}\left\langle \nabla _{b}f,\nabla
_{b}f_{0}\right\rangle d\mu =0$. Thus%
\begin{equation}
\begin{array}{c}
0=\int_{M}[f_{0}\Delta _{b}f+\left\langle \nabla _{b}f,\nabla
_{b}f_{0}\right\rangle ]d\mu =\frac{1}{2}C_{n}\int_{\Sigma
}f_{0}f_{e_{2n}}d\Sigma _{p}.%
\end{array}
\label{22}
\end{equation}%
On the other hand, since $f=0$ on $\Sigma $ and $\alpha e_{2n}+T$ is tangent
along $\Sigma $ from the definition of $\alpha ,$ so on $\Sigma $ we have%
\begin{equation*}
\begin{array}{c}
f_{0}=-\alpha f_{e_{2n}}%
\end{array}%
\end{equation*}%
and 
\begin{equation*}
\begin{array}{c}
-\alpha \theta \wedge e^{n}\wedge \left( d\theta \right) ^{n-1}=e^{n}\wedge
e^{2n}\wedge \left( d\theta \right) ^{n-1}=\frac{1}{2n}\left( d\theta
\right) ^{n}%
\end{array}%
\end{equation*}%
which is a nonnegative $2n$-form on $\Sigma $. It follows from (\ref{22})
that 
\begin{equation*}
\begin{array}{c}
0=\frac{1}{2}C_{n}\int_{\Sigma }f_{0}f_{e_{2n}}d\Sigma _{p}=-\int_{\Sigma
}\alpha f_{e_{2n}}^{2}\theta \wedge e^{n}\wedge \left( d\theta \right)
^{n-1}=\frac{1}{2n}\int_{\Sigma }f_{e_{2n}}^{2}\left( d\theta \right) ^{n}.%
\end{array}%
\end{equation*}%
This will yield that $\alpha f_{e_{2n}}^{2}=0$ on $\Sigma $ and therefore $%
f_{0}=0$ on $\Sigma ,$ so $T$ is tangent to $\Sigma $ and thus $\alpha $
vanishes identically on $\Sigma \backslash S_{\Sigma }$ and continuously
extends over the singular set $S_{\Sigma }$ and the same constant on $%
S_{\Sigma }$.
\end{proof}


\begin{thebibliography}{CCKL}
\bibitem[CC1]{cc1} S.-C. Chang and H.-L. Chiu, \textit{On the CR Analogue of
Obata's Theorem in a Pseudohermitian }$3$\textit{-Manifold}, Math. Ann.
345(1) (2009), 33-51.

\bibitem[CC2]{cc2} S.-C. Chang and H.-L. Chiu, \textit{Nonnegativity of CR
Paneitz operator and its application to the CR Obata's Theorem in a
pseudohermitian }$(2n+1)$\textit{-manifold}, Journal of Geometric Analysis
19 (2009), 261-287.

\bibitem[CC3]{cc3} S.-C. Chang and H.-L. Chiu, \textit{On the Estimate of
First Eigenvalue of a Sublaplacian on a Pseudo-Hermitian }$3$\textit{%
-Manifold}, Pacific J. Math. 232(2) (2007), 269-282.

\bibitem[CCKL]{cckl} D.-C. Chang, S.-C. Chang, T.-J. Kuo and S.-H. Lai,%
\textit{\ CR Li-Yau gradient estimate for Witten Laplacian via Bakry-Emery
pseudohermitian Ricci curvature}, Asian J. Math. 22(2) (2018), 223-256.

\bibitem[CCW]{ccw} S.-C. Chang, C.-W. Chan and C.-T. Wu, \textit{On} the%
\textit{\ CR analogue of Reilly formula and Yau eigenvalue conjecture},
Asian J. Math. 22(5) (2018), 919-940.

\bibitem[Ch]{ch} H.-L. Chiu, \textit{The sharp lower bound for the first
positive eigenvalue of the sub-Laplacian on a pseudohermitian 3-manifold},
Ann. Global Anal. Geom. 30 (2006), 81-96.

\bibitem[CHMY]{chmy} J.-H. Cheng, J.-F. Hwang, A. Malchiodi, and P. Yang, 
\textit{Minimal surfaces in pseudohermitian geometry and the Bernstein
problem in the Heisenberg group}, Annali della Scuola Normale Superiore di
Pisa, Classe di Scienze (5), 4 (2005), 129-177.

\bibitem[GL]{gl} C. R. Graham and J. M. Lee, \textit{Smooth solutions of
degenerate Laplacians on strictly pseudoconvex domains}, Duke Math. J. 57
(1988), 697-720.

\bibitem[Gr]{gr} A. Greenleaf, \textit{The first eigenvalue of a
Sublaplacian on a pseudohermitian manifold}, Comm. Part. Diff. Equ. 10(2)
(1985), no.3 191--217.

\bibitem[H]{H} K. Hirachi, \textit{Scalar pseudo-hermitian invariants and
the Szeg\"{o} kernel on }$3$\textit{-dimensional CR manifolds}, Lecture
Notes in Pure and Appl. Math. 143, pp. 67-76, Dekker, 1992.

\bibitem[L]{l} J. M. Lee, \textit{Pseudo-Einstein structure on CR manifolds}%
, Amer. J. Math. 110 (1988), 157-178.

\bibitem[Li]{Li} P. Li, \textit{Geometric analysis}, Cambridge Studies in
Advanced Mathematics, 134, Cambridge University Press, Cambridge, 2012.
x+406 pp. ISBN: 978-1-107-02064-11996.

\bibitem[LW]{lw} S.-Y Li and X. Wang, \textit{An Obata-type Theorem in CR
geometry}, J. Diff. Geom. 95 (2013), 483-502.

\bibitem[L]{L} A. Lichnerowicz, \textit{G\'{e}ometrie des groups de
transformations}, Dunod, Paris, 1958.

\bibitem[P]{p} S. Paneitz, \textit{A quartic conformally covariant
differential operator for arbitrary pseudo-Riemannian manifolds}, preprint,
1983.

\bibitem[Re]{Re} R. Reilly,\textit{\ Applications of the hessian operator in
a Riemannian manifold}, Indiana U. Math. J. 26 (1977), 459-472.

\bibitem[T]{T} N. Tanaka, \textit{A differential geometric study on strongly
pseudo-convex manifolds}, Kinokuniya Book Store Co., Ltd, Kyoto, 1975.

\bibitem[We]{We} S. M. Webster, \textit{Pseudohermitian structures on a real
hypersurface}, J. Diff. Geom. 13 (1978), 25-41.
\end{thebibliography}
\end{document}